\numberwithin{equation}{section}
\newtheorem{theorem}[equation]{Theorem}
\newtheorem*{theorem*}{Theorem}
\newtheorem{lemma}[equation]{Lemma}
\newtheorem*{conjecture*}{Mamma Conjecture}
\newtheorem*{conjecture1*}{Mamma Conjecture (revisited)}
\newtheorem{proposition}[equation]{Proposition}
\newtheorem{corollary}[equation]{Corollary}
\newtheorem*{corollary*}{Corollary}
\theoremstyle{remark}
\newtheorem{definition}[equation]{Definition}
\newtheorem{example}[equation]{Example}
\theoremstyle{remark}
\newtheorem{remark}[equation]{Remark}
\newcommand{\cA}{{\mathcal A}}
\newcommand{\cC}{{\mathcal C}}
\newcommand{\cD}{{\mathcal D}}
\newcommand{\cE}{{\mathcal E}}
\newcommand{\cF}{{\mathcal F}}
\newcommand{\cG}{{\mathcal G}}
\newcommand{\cM}{{\mathcal M}}
\newcommand{\cN}{{\mathcal N}}
\newcommand{\cO}{{\mathcal O}}
\newcommand{\cQ}{{\mathcal Q}}
\newcommand{\cR}{{\mathcal R}}
\newcommand{\cZ}{{\mathcal Z}}
\newcommand{\dgHo}{\mathrm{H}^0}
\newcommand{\bbC}{\mathbb{C}}
\newcommand{\bbL}{\mathbb{L}}
\newcommand{\bbN}{\mathbb{N}}
\newcommand{\bbP}{\mathbb{P}}
\newcommand{\bbR}{\mathbb{R}}
\newcommand{\bbQ}{\mathbb{Q}}
\newcommand{\bbZ}{\mathbb{Z}}
\DeclareMathOperator{\SmProj}{SmProj} 
\DeclareMathOperator{\id}{id}
\DeclareMathOperator{\Mod}{Mod}
\DeclareMathOperator{\Num}{Num} 
\newcommand{\NC}{\mathrm{nc}} 
\newcommand{\NChow}{\mathrm{NChow}} 
\newcommand{\NNum}{\mathrm{NNum}} 
\newcommand{\NVoev}{\mathrm{NVoev}} 
\newcommand{\NHom}{\mathrm{NHom}} 
\newcommand{\Voev}{\mathrm{Voev}} 
\newcommand{\dgcat}{\mathrm{Dgcat}}
\newcommand{\dgalg}{\mathrm{Dga}}
\newcommand{\spdgalg}{\mathrm{SpDga}}
\newcommand{\perf}{\mathrm{perf}}
\newcommand{\Chow}{\mathrm{Chow}}
\newcommand{\dg}{\mathrm{dg}}
\newcommand{\Hom}{\mathrm{Hom}}
\newcommand{\End}{\mathrm{End}}
\newcommand{\rep}{\mathrm{rep}}
\newcommand{\Hmo}{\mathrm{Hmo}}
\newcommand{\op}{\mathrm{op}}
\newcommand{\stab}{\mathrm{stab}}
\newcommand{\too}{\longrightarrow}
\newcommand{\ie}{\textsl{i.e.}\ }
\newcommand{\eg}{\textsl{e.g.}}
\begin{document}

\title[Weil restriction of noncommutative motives]{Weil restriction of noncommutative motives}
\author{Gon{\c c}alo~Tabuada}

\address{Gon{\c c}alo Tabuada, Department of Mathematics, MIT, Cambridge, MA 02139, USA}
\email{tabuada@math.mit.edu}
\urladdr{http://math.mit.edu/~tabuada}
\thanks{The author was partially supported by a NSF CAREER Award}

\subjclass[2000]{14A22, 14C15, 18D20, 18E30, 19A49}
\date{\today}

\keywords{Weil restriction, (noncommutative) pure motives, binomial ring, full exceptional collection, central simple algebra, Galois descent, noncommutative algebraic geometry}

\abstract{The Weil restriction functor, introduced in the late fifties, was recently extended by Karpenko to the category of Chow motives with integer coefficients. In this article we introduce the noncommutative (=NC) analogue of the Weil restriction functor, where schemes are replaced by dg algebras, and extend it to Kontsevich's categories of NC Chow motives and NC numerical motives. Instead of integer coefficients, we work more generally with coefficients in a binomial ring. Along the way, we extend Karpenko's functor to the classical category of numerical motives, and compare this extension with its NC analogue. As an application, we compute the (NC) Chow motive of the Weil restriction of every smooth projective scheme whose category of perfect complexes admits a full exceptional collection. Finally, in the case of central simple algebras, we describe explicitly the NC analogue of the Weil restriction functor using solely the degree of the field extension. This leads to a ``categorification'' of the classical corestriction homomorphism between Brauer groups.
}}

\maketitle
\vskip-\baselineskip
\vskip-\baselineskip
\vskip-\baselineskip



\section{Introduction}
\subsection*{Weil restriction}
Given a finite separable field extension $l/k$, Weil~\cite{Weil} introduced in the late fifties the {\em Weil restriction functor}
\begin{equation}\label{eq:Weil}
\cR_{l/k}: \mathrm{QProj}(l) \too \mathrm{QProj}(k) 
\end{equation}
from quasi-projective $l$-schemes to quasi-projective $k$-schemes. This functor is nowadays an important tool in algebraic geometry and number theory; see Milne's work \cite{Milne} on the Swinnerton-Dyer conjecture. Conceptually, \eqref{eq:Weil} is the right adjoint of the base-change functor. Among other properties, it preserves smoothness, projectiveness, and it is moreover symmetric monoidal. Hence, it restricts to a $\otimes$-functor 
\begin{equation}\label{eq:Weil-smooth}
\cR_{l/k}: \SmProj(l) \too \SmProj(k)
\end{equation}
from smooth projective $l$-schemes to smooth projective $k$-schemes. At the beginning of the millennium, Karpenko \cite{Karpenko} extended \eqref{eq:Weil-smooth} to $\otimes$-functors
\begin{equation}\label{eq:Karpenko}
\xymatrix@C=2.9em@R=2em{
\SmProj(l)^\op\ar[d]_-{M} \ar[r]^{\cR_{l/k}} & \SmProj(k)^\op \ar[d]^-{M} & \SmProj(l)^\op\ar[d]_-{M^\ast} \ar[r]^{\cR_{l/k}} & \SmProj(k)^\op \ar[d]^-{M^\ast} \\
\Chow(l) \ar[r]_-{\cR_{l/k}} & \Chow(k) & \Chow^\ast(l) \ar[r]_-{\cR^\ast_{l/k}} & \Chow^\ast(k)
}
\end{equation}
defined on the categories of Chow motives with integer coefficients; $\Chow^\ast(-)$ is constructed using correspondences of arbitrary codimension. These latter $\otimes$-functors, although well-defined, are not additive! Consequently, they are not the right adjoints of the corresponding base-change functors.
\subsection*{Noncommutative motives}
In noncommutative algebraic geometry in the sense of Bondal, Drinfeld, Kaledin, Kapranov, Kontsevich, Orlov, Stafford, Van den Bergh, and others (see \cite{BK,BO1,BO,BV,VDrinfeld,Kaledin,IAS,Miami,finMot,Stafford, Stafford1}), schemes are replaced by differential graded (=dg) algebras. A celebrated result, due to Bondal-Van den Bergh \cite{BV}, asserts that for every quasi-compact quasi-separated scheme $X$ there exists a dg algebra $A_X$ whose derived category $\cD(A_X)$ is equivalent to $\cD(X)$; see \S\ref{sec:schemes}. The dg algebra $A_X$ is unique up to Morita equivalence and reflects many of the properties of $X$. For example, $X$ is smooth (resp. proper) if and only if $A_X$ is smooth (resp. proper) in the sense of Kontsevich; see \S\ref{sec:dg}. Let $\dgalg(k)$ be the category of dg $k$-algebras and $\spdgalg(k)$ its full subcategory of smooth proper dg $k$-algebras. Making use of $K$-theory, Kontsevich introduced in \cite{IAS,Miami,finMot} the category of noncommutative Chow motives with integer coefficients $\NChow(k)$ as well as a canonical $\otimes$-functor $
U: \spdgalg(k) \to \NChow(k)$; consult \S\ref{sec:NCmotives} for details and the survey articles \cite{survey,survey1} for several applications.
\subsection*{Motivation:}
The aforementioned constructions and results in the commutative world lead us naturally to the following motivating questions:
\begin{itemize}
\item[{\bf Q1:}] {\it Does the Weil restriction functor admits a noncommutative analogue $\cR^\NC_{l/k}$ ?}
\item[{\bf Q2:}] {\it Does $\cR_{l/k}^\NC$ extends to a $\otimes$-functor on noncommutative Chow motives ?}
\item[{\bf Q3:}] {\it What is the relation between the functors $\cR_{l/k}$ and $\cR^\NC_{l/k}$ ?}
\end{itemize}
Besides Chow motives, we can consider Grothendieck's category of numerical motives. The noncommutative analogue of this category was also introduced by Kontsevich in \cite{IAS}; see \S\ref{sec:NCmotives}. Hence, it is natural to ask the following extra question:
\begin{itemize}
\item[{\bf Q4:}] {\it Do the functors $\cR_{l/k}$ and $\cR^\NC_{l/k}$ extend to numerical motives ?}
\end{itemize}
In this article we provide precise answers to all these questions. In the process we develop new technology of independent interest. Consult \S\ref{sec:computations} for computations.
\section{Statement of results}\label{sec:statements}
Let $l/k$ be a finite separable field extension of degree $d$, $L/k$ a finite Galois field extension containing $l$, $G$ the Galois group $\mathrm{Gal}(L/k)$, $H$ the subgroup of $G$ such that $L^H\simeq l$, and $G/H$ the $G$-set of left cosets of $H$ in $G$. Under these notations, the noncommutative analogue of \eqref{eq:Weil} is given by the following $\otimes$-functor
\begin{eqnarray}\label{eq:Weil-2}
\cR^\NC_{l/k}: \dgalg(l) \too \dgalg(k) && A \mapsto (\otimes_{\sigma\in G/H} {}^{\sigma\!} A_L)^G\,,
\end{eqnarray}
where $A_L:=A\otimes_l L$, ${}^{\sigma\!} A_L$ is the $\sigma$-conjugate of $A_L$, and $G$ acts on $\otimes_{\sigma\in G/H} {}^{\sigma\!} A_L$ by permutation of the $\otimes$-factors; see \S \ref{sub:WeilNC}. Note that the $\sigma$-conjugate ${}^{\sigma\!} A_L$ depends only (up to isomorphism) on the left coset $\sigma H$ and that the functor \eqref{eq:Weil-2} is independent (up to isomorphism) of the Galois field extension $L/k$. In the particular case where $l/k$ is Galois, we can take $L=l$. Our first main result is the following:
\begin{theorem}\label{thm:main1}
The functor \eqref{eq:Weil-2} preserves smooth and proper dg algebras.
\end{theorem}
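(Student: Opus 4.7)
The plan is to decompose the construction $\cR^\NC_{l/k}(A) = (\otimes_{\sigma \in G/H} {}^{\sigma\!} A_L)^G$ into four elementary operations and verify that smoothness and properness (in the sense of Kontsevich, as recalled in \S\ref{sec:dg}) are preserved at each step: (i) base change along $l \hookrightarrow L$; (ii) conjugation by representatives $\sigma \in G/H$; (iii) finite tensor product over $L$; and (iv) taking $G$-invariants along the semilinear Galois action.

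Steps (i)--(iii) are formal. Base change $A \mapsto A_L = A \otimes_l L$ preserves perfection of the diagonal bimodule, since $A_L \otimes_L A_L^\op \simeq (A \otimes_l A^\op) \otimes_l L$ and perfection of modules is preserved by tensoring along a commutative ring map; the same base-change argument preserves total finiteness of cohomology. Conjugation ${}^{\sigma\!}(-)$ is an equivalence of dg $L$-algebras induced by the field automorphism $\sigma : L \to L$, so it preserves both properties tautologically. A finite tensor product over $L$ of smooth proper dg $L$-algebras is again smooth proper, since each of these two classes is stable under $\otimes_L$. Consequently, the intermediate object $B := \otimes_{\sigma \in G/H} {}^{\sigma\!} A_L$ is a smooth proper dg $L$-algebra.

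The hard step is (iv). The $G$-action on $B$ is \emph{semilinear}: $G$ permutes the tensor factors compatibly with the Galois action on $L$, so that $B^G$ is naturally a dg $k$-algebra. By classical Galois descent for modules (applied level-wise to the underlying chain complex, and multiplicatively compatible with the algebra structure), the natural map $B^G \otimes_k L \isotoo B$ is an isomorphism of dg $L$-algebras with semilinear $G$-action. Since $L/k$ is faithfully flat, properness and smoothness descend along $-\otimes_k L$: properness follows from $\dim_k H^n(B^G) = \dim_L H^n(B^G \otimes_k L) = \dim_L H^n(B)$ together with vanishing of all but finitely many terms; smoothness follows because perfection of the diagonal $B^G$-bimodule can be tested after the faithfully flat extension $k \to L$, where it becomes perfection of the diagonal $B$-bimodule. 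This places $\cR^\NC_{l/k}(A)$ in $\spdgalg(k)$, as required. Independence of the auxiliary extension $L/k$ noted in the paragraph preceding the theorem guarantees that this construction is intrinsic.
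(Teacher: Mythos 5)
Your proof is correct, and it reaches the result via essentially the same two ingredients as the paper—DG Galois descent and reflection of compactness along the finite extension $L/k$—but you organize the smoothness argument differently. The paper first constructs the Weil‑restriction functor on module categories $\cD(A)\to\cD(\cR^\NC_{l/k}(A))$, $M\mapsto(\otimes_{\sigma\in G/H}{}^{\sigma\!}M_L)^G$, proves in Proposition~\ref{prop:compact1} that it preserves compact objects (combining the fact that external tensor products of compacts are compact with Lemma~\ref{lem:conservativity}), and then applies this with $A$ replaced by $A^\op\otimes A$ to the diagonal bimodule. You instead establish directly that the intermediate $L$-algebra $B:=\otimes_{\sigma\in G/H}{}^{\sigma\!}A_L$ is smooth and proper—using stability of smooth proper dg algebras under base change, conjugation, and $\otimes_L$—and then descend along $B^G\otimes_kL\simeq B$. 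Under the hood both arguments prove the same intermediate claim (smoothness of $B$, which in the paper appears as compactness of $\otimes_\sigma{}^{\sigma\!}A_L$ over $B^\op\otimes_LB$) and finish with the same descent step. Where you appeal to ``faithfully flat descent'' for reflection of perfection, the paper gives the elementary argument tailored to the finite extension $L/k$—a $k$-linear splitting $L\simeq k\oplus C$ exhibits the module as a direct summand of its base change, which is already known to be compact—and it would be worth spelling out that lemma rather than invoking general faithfully flat machinery, which is heavier than needed. Your reorganization is a bit cleaner for this particular theorem; the paper's more general module‑level functor, however, is reused later in the proof of Theorem~\ref{thm:new}, so the extra generality pays off. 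The properness argument is identical in both.
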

The above functor \eqref{eq:Weil-2}, combined with Theorem \ref{thm:main1}, provides an answer to question Q1. 
Our affirmative answer to question Q2 is the following: 
\begin{theorem}\label{thm:new}
The restriction of the above functor \eqref{eq:Weil-2} to smooth proper dg algebras extends to a $\otimes$-functor on noncommutative Chow motives
\begin{equation}\label{eq:we}
\cR^\NC_{l/k}: \NChow(l) \too \NChow(k) \,.
\end{equation}
\end{theorem}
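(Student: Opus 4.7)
The plan is to extend $\cR^\NC_{l/k}$ to $\NChow(l)$ by defining it on perfect bimodules and checking that the resulting assignment is additive, compatible with derived tensor product, and hence descends to a symmetric monoidal functor on the $K_0$-Hom sets. Recall that $\Hom_{\NChow(l)}(A,A') = K_0(\rep(A,A'))$, where $\rep(A,A')$ is the triangulated category of perfect $(A,A')$-bimodules, composition is derived tensor product, and the universal functor $U$ sends a dg algebra map $f : A \to A'$ to the class of the bimodule ${}_f A'$. So the task reduces to functorially associating to each $B \in \rep(A,A')$ a perfect bimodule over $\cR^\NC_{l/k}(A)$ and $\cR^\NC_{l/k}(A')$, in a way that is additive, multiplicative in bimodule composition, symmetric monoidal, and recovers $U \circ \cR^\NC_{l/k}$ on bimodules of the form ${}_f A'$.

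The construction mirrors formula \eqref{eq:Weil-2}. Given $B \in \rep(A,A')$, I first base-change to get $B_L \in \rep(A_L, A'_L)$; then form the $\sigma$-conjugates ${}^\sigma B_L$ for $\sigma \in G/H$; then take the external tensor product $\widetilde B := \otimes_{\sigma \in G/H} {}^\sigma B_L$, which is a perfect bimodule over $\widetilde A := \otimes_\sigma {}^\sigma A_L$ and $\widetilde{A'} := \otimes_\sigma {}^\sigma A'_L$ carrying a natural $G$-action compatible with the permutation-plus-Galois actions on $\widetilde A$ and $\widetilde{A'}$. Finally, I pass to $G$-invariants to obtain $\widetilde B^G$, the proposed image of $B$ under \eqref{eq:we}. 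Each intermediate operation (base change, conjugation, external tensor product) is exact, preserves perfectness and direct sums, and carries derived tensor product of bimodules to derived tensor product; hence, modulo the descent step, the assignment is additive, monoidal, and multiplicative in composition, so it descends to $K_0$.

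The main obstacle is the final $G$-invariants step: one must show that $(-)^G$ sends perfect $G$-equivariant bimodules over $(\widetilde A, \widetilde{A'})$ to perfect bimodules over $(\cR^\NC_{l/k}(A), \cR^\NC_{l/k}(A'))$, compatibly with composition and direct sums, working with integer coefficients and without inverting $|G|$. The key input is a Galois-descent identification $\cR^\NC_{l/k}(A) \otimes_k L \simeq \widetilde A$ of dg $L$-algebras, intertwining the Galois $G$-action on the left factor with the permutation-plus-Galois $G$-action on $\widetilde A$, and similarly for $A'$. Faithfully flat descent along $L/k$ then realizes perfect $(\cR^\NC_{l/k}(A),\cR^\NC_{l/k}(A'))$-bimodules as $G$-equivariant perfect $(\widetilde A,\widetilde{A'})$-bimodules, with base change and $(-)^G$ mutually inverse equivalences; since this equivalence is exact and monoidal, it yields \eqref{eq:we}. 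Smoothness and properness of $\cR^\NC_{l/k}(A)$, which is what keeps us inside $\NChow(k)$, is guaranteed by Theorem \ref{thm:main1}, and a direct inspection on bimodules of the form ${}_f A'$ confirms that \eqref{eq:we} extends $U \circ \cR^\NC_{l/k}|_{\spdgalg(l)}$.
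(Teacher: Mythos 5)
Your construction at the level of objects and bimodules is exactly the one the paper uses (base change to $L$, form the $\sigma$-conjugates, take the external tensor product over $G/H$, and pass to $G$-invariants via Galois descent). However, there is a genuine gap in the passage to $K_0$, and it is exactly the point that makes this theorem nontrivial.

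You claim that ``each intermediate operation \ldots preserves direct sums, and \ldots hence, modulo the descent step, the assignment is additive \ldots so it descends to $K_0$.'' This is false, and the paper even flags it right after stating Theorem~\ref{thm:new}: ``As in the commutative world, the functor \eqref{eq:we} is not additive!'' The culprit is precisely the step you gloss over: the assignment $B \mapsto \otimes_{\sigma \in G/H} {}^{\sigma\!} B_L$ is the diagonal of a $d$-fold multilinear functor, and is therefore a \emph{polynomial} functor of degree $d$, not an additive one. If $B = B_1 \oplus B_2$ then $\otimes_\sigma {}^{\sigma\!}(B_1 \oplus B_2)_L$ decomposes into $2^d$ summands with cross terms, not into $\otimes_\sigma {}^{\sigma\!} (B_1)_L \oplus \otimes_\sigma {}^{\sigma\!}(B_2)_L$. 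So you cannot simply ``descend to $K_0$'': a non-additive map of monoids does not automatically pass to the group completion, let alone through the further relations coming from conflations and quasi-isomorphisms that present $K_0$ as a quotient of $K_0^\oplus$.

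The missing content is exactly what Proposition~\ref{prop:polynomial} supplies: one must show that the induced map $K_0^\oplus(A^\op\otimes A') \to K_0^\oplus(\cR^\NC_{l/k}(A)^\op\otimes\cR^\NC_{l/k}(A'))$ is a polynomial map of degree $d$ in the sense of Eilenberg--MacLane, and then verify, using Joukhovitski's criterion (Proposition~\ref{prop:properties}(ii)) together with the explicit filtration of $\otimes_\sigma {}^{\sigma\!}(N\oplus M\oplus M'')$ by the graded pieces $\mathrm{Gr}_i$, that it factors through the conflation relations and the quasi-isomorphism relations defining $K_0$. This is the technical heart of the proof, and it is entirely absent from your argument. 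Your treatment of the composition law has the analogous issue: the two polynomial maps in the relevant square are shown to agree by reducing, via uniqueness of polynomial extensions, to a check on the underlying monoids, not by invoking additivity.

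In short: right construction, but the claim of additivity is wrong, and without the polynomial-maps machinery (Sections 7 and 8 of the paper) the descent to $K_0$ is unjustified.
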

As in the commutative world, the functor \eqref{eq:we} is not additive! Therefore, given a commutative ring of coefficients $R$, this functor (as well as Karpenko's functors \eqref{eq:Karpenko}) cannot be extended by $R$-linearity to noncommutative Chow motives with $R$-coefficients. When $R$ is a binomial ring (see \S\ref{sec:binomial}), this difficulty can be~circumvented:
\begin{proposition}\label{prop:binomial}
Given a binomial ring $R$, \eqref{eq:Karpenko} and \eqref{eq:we} extend to $\otimes$-functors
$$\Chow_R(l)\stackrel{\cR_{l/k}}{\too} \Chow_R(k) \,\,\,\, \Chow_R^\ast(l) \stackrel{\cR^\ast_{l/k}}{\too} \Chow_R^\ast(k) 
\,\,\,\, \NChow_R(l)\stackrel{\cR_{l/k}^\NC}{\too} \NChow_R(k)\,.$$
\end{proposition}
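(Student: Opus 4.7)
The plan is to exhibit each of $\cR_{l/k}$, $\cR^\ast_{l/k}$, and $\cR^\NC_{l/k}$ as a \emph{polynomial} map (in the sense of Eilenberg--Mac Lane) of degree $\leq d = [l:k]$ on every Hom-group, and then apply the universal extension of polynomial maps to coefficients in a binomial ring, which is the material of \S\ref{sec:binomial}.

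First, I would verify the polynomial structure. On the commutative side, Karpenko's formula sends a correspondence $\alpha$ to the Galois descent of $\bigotimes_{\sigma \in G/H} \alpha^\sigma$; since $|G/H| = d$, the assignment $\alpha \mapsto \bigotimes_{\sigma}\alpha^\sigma$ is homogeneous of degree $d$ in the coefficients of $\alpha$, and Galois descent is a linear, hence polynomial, operation afterwards. The argument on the noncommutative side is identical: morphisms in $\NChow(l)$ are $K_0$-classes of certain $\rep$-categories, and the functor of Theorem~\ref{thm:new} acts by the same $d$-fold tensor-product-followed-by-descent recipe, so it too is polynomial of degree $\leq d$ on Hom-groups.

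Second, I would invoke the extension theorem for binomial rings. For any polynomial map $f\colon M \to N$ of degree $\leq d$ between free abelian groups and any binomial ring $R$, there is a unique polynomial map $f_R\colon M \otimes_\bbZ R \to N \otimes_\bbZ R$ of degree $\leq d$ extending $f$: one writes $f$ in the basis of integer-valued binomial polynomials $\binom{x}{k}$ and interprets those coefficients inside $R$. Applying this Hom-group by Hom-group defines candidate functors on $\Chow_R(l)$, $\Chow_R^\ast(l)$ and $\NChow_R(l)$. Preservation of identity morphisms and the $\otimes$-structure then transfer from the integer case via the uniqueness clause, since on $\bbZ$-coefficients both sides of each identity coincide as polynomial maps of bounded degree.

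The main obstacle is compatibility with composition, because composition of correspondences is only bilinear and the composite of two polynomial maps of degree $d$ has degree $d^2$. I would handle this by reading the functoriality identity $\cR(\beta \circ \alpha) = \cR(\beta) \circ \cR(\alpha)$ as an equality of polynomial maps of degree $\leq d^2$ on $\Hom(X,Y) \oplus \Hom(Y,Z)$; it holds over $\bbZ$ by Theorem~\ref{thm:new} together with the commutative analogue of Karpenko, hence over $R$ by uniqueness of the binomial extension applied to the difference of the two sides. This simultaneously yields the $\otimes$-functorial structure in all three cases asserted by the proposition.
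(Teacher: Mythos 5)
Your strategy is essentially the paper's: identify the Weil restriction on Hom-groups as an Eilenberg--Mac Lane polynomial map of degree $d$, extend it along $-\otimes_\bbZ R$ using Xantcha's theorem for binomial rings, and check functoriality by comparing over $\bbZ$ first. Two small points deserve tightening. First, you restrict the extension theorem to free abelian groups, but the relevant Hom-groups are Chow groups and $K_0$-groups, which need not be free; fortunately Xantcha's result applies to arbitrary abelian groups, so the hypothesis should simply be dropped. Second, the step where you conclude $\cR_R(\beta_R \circ_R \alpha_R) = \cR_R(\beta_R) \circ_R \cR_R(\alpha_R)$ from the integral identity ``by uniqueness applied to the difference of the two sides'' hides the real content: one needs to know that the binomial extension is compatible with composition of polynomial maps, with products, and with bilinear pairings, so that, e.g., $(\mathrm{comp}\circ(\cR\times\cR))_R$ really equals $\mathrm{comp}_R\circ(\cR_R\times\cR_R)$. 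This follows from Xantcha's characterization of polynomial maps as natural transformations $S\otimes_\bbZ - \Rightarrow S'\otimes_\bbZ -$, which is precisely what the paper records as Lemma~\ref{lem:binomial}; pure ``uniqueness of extension for a fixed $R$'' is not by itself enough, since one must first certify that the candidate on the right-hand side \emph{is} a binomial extension of the integral composite. With that lemma in hand your argument goes through and matches the one in the text.
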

As explained in \cite[Thm.~1.1]{CvsNC}, when $R$ is an $\bbQ$-algebra there exists an $R$-linear fully-faithful $\otimes$-functor\footnote{The functor $\Psi$ was denoted by $R$ in {\em loc. cit.}} $\Psi$ such that the following composition
$$ \SmProj(k)^\op \stackrel{M_R^\ast}{\too} \Chow_R^\ast(k) \stackrel{\Psi}{\too} \NChow_R(k)$$
sends $X$ to $U_R(A_X)$. Intuitively speaking, the commutative world can be embedded into the noncommutative world. Our answer to question Q3, which also justifies the correctness of the above functor \eqref{eq:Weil-2}, is the following:
\begin{theorem}\label{thm:new1}
Given a quasi-projective $l$-scheme $X$, the dg $k$-algebras $\cR^\NC_{l/k}(A_X)$ and $A_{\cR_{l/k}(X)}$ are Morita equivalent. Moreover, we have the commutative diagram:
\begin{equation}\label{eq:diagram-Chow}
\xymatrix{
\Chow_R^\ast(l) \ar[rr]^-{\Psi} \ar[d]_-{\cR^\ast_{l/k}}&& \NChow_R(l) \ar[d]^-{\cR^{\NC}_{l/k}} \\
\Chow^\ast_R(k) \ar[rr]_-{\Psi} && \NChow_R(k)\,.
}
\end{equation}
\end{theorem}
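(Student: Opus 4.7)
The proof strategy breaks into two parts: the Morita equivalence of dg $k$-algebras $\cR^\NC_{l/k}(A_X)\simeq_{\mathrm{Mor}} A_{\cR_{l/k}(X)}$, and the commutativity of~\eqref{eq:diagram-Chow}.

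For the Morita equivalence I would combine three ingredients: (i) the classical base change formula for Weil restriction, $\cR_{l/k}(X)\otimes_k L\simeq \prod_{\sigma\in G/H}{}^\sigma(X\otimes_l L)$ as $L$-schemes; (ii) the compatibility of the assignment $X\mapsto A_X$ with base change of the base field and with finite fibre products, namely $A_{X_L}\simeq_{\mathrm{Mor}} A_X\otimes_l L$ and $A_{Y_1\times_L Y_2}\simeq_{\mathrm{Mor}} A_{Y_1}\otimes_L A_{Y_2}$; (iii) Galois descent for dg $k$-algebras in the Morita form $B\simeq_{\mathrm{Mor}}(B\otimes_k L)^G$, which should have been installed already in the proof of Theorem~\ref{thm:new}. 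Chaining (i) and (ii) one obtains a $G$-equivariant Morita equivalence
$$A_{\cR_{l/k}(X)}\otimes_k L \simeq_{\mathrm{Mor}} \bigotimes_{\sigma\in G/H}{}^\sigma(A_X\otimes_l L),$$
where $G$ permutes the tensor factors and acts on $L$; applying (iii) and unwinding~\eqref{eq:Weil-2}, both sides collapse to the same dg $k$-algebra up to Morita equivalence.

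For the commutativity of~\eqref{eq:diagram-Chow} on objects, this follows at once from the Morita equivalence together with $\Psi(M^\ast_R(X))=U_R(A_X)$. On morphisms one cannot reduce to generators by additivity, since neither vertical functor is additive. The plan is to argue after extension of scalars to $L$: for a correspondence $\alpha\in\mathrm{CH}^\ast(X\times Y)_R$, Karpenko's construction of $\cR^\ast_{l/k}(\alpha)$ computes the tensor product of the Galois conjugates $\otimes_{\sigma\in G/H}{}^\sigma\alpha_L$ followed by $G$-invariants, while $\cR^\NC_{l/k}(\Psi(\alpha))$ is obtained by the same recipe in the bimodule category. Since $\Psi$ is a fully-faithful $\otimes$-functor compatible with base change and with Galois conjugation, these two $L$-versions are identified by $\Psi$, and taking $G$-invariants returns the desired commutativity over $k$.

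The principal obstacle is precisely this morphism-level matching: one must translate Karpenko's combinatorial description of $\cR^\ast_{l/k}$ on cycle classes into the bimodule language of $\Psi$, and verify that $\Psi$ intertwines base change, tensor products of conjugates, and formation of $G$-invariants. Once that $G$-equivariant compatibility of $\Psi$ is nailed down, the commutativity of~\eqref{eq:diagram-Chow} follows formally from the Morita equivalence on objects.
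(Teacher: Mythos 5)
Your overall strategy for the Morita equivalence is the same as the paper's: base change to $L$, use the compatibility of $X\mapsto A_X$ with base change, conjugation, and products, and then descend. However, the descent step as you state it is a genuine gap. Galois descent gives a $\otimes$-equivalence $\cC(L)^{\mathrm{Gal}}\simeq\cC(k)$ at the level of modules, and hence an isomorphism $B\simeq (B\otimes_kL)^G$ of dg $k$-algebras; but it does \emph{not} immediately follow that a $G$-equivariant \emph{Morita} equivalence between two dg $L$-algebras descends to a Morita equivalence of their $G$-invariants. A Morita equivalence is witnessed by a bimodule, and one must check that this bimodule carries a compatible $G$-action and that applying $(-)^G$ again produces an invertible bimodule. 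The paper sidesteps this entirely: it fixes an explicit compact generator $\cG$ of $\perf(X)$, tracks actual \emph{quasi-isomorphisms} (not just Morita equivalences) through five steps, invokes the dg-categorical descent equivalence $\perf_\dg(\prod_\sigma {}^{\sigma}X_L;G)\simeq \perf_\dg(\cR_{l/k}(X))$ of Proposition \ref{prop:quotient}, and then applies the quasi-isomorphism-preserving functor $(-)^G$ to a $G$-equivariant chain of quasi-isomorphisms between endomorphism dg algebras. Your Lemma stating compatibility of $A_X$ with products and base change should likewise be stated at the level of quasi-isomorphisms (as the paper does, using \cite[Lem.~3.4.1]{BV}), not just Morita equivalences, so that $(-)^G$ can be applied cleanly.

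For the commutativity of the square, you correctly identify the principal obstacle (the morphism-level matching) and correctly observe that one cannot reduce to generators since neither vertical functor is additive. But your plan to take $G$-invariants at the end is not what the paper does, and it is not obviously workable as stated. The paper instead uses rigidity to reduce to a single $\Hom$ group, identifies $K_0(X)_R\simeq \cZ^\ast_{\mathrm{rat}}(X)_R$ via the Mukai vector $\mathrm{ch}\cdot\sqrt{\mathrm{Td}}$ (the concrete incarnation of $\Psi$ on morphisms), and then descends from $L$ to $k$ by \emph{injectivity of the base-change homomorphism on Chow groups}, rather than by taking $G$-invariants. At the $L$-level the comparison reduces to classical compatibilities of $\mathrm{ch}$ and $\sqrt{\mathrm{Td}}$ with exterior products, Galois conjugation, and field extension. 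You would need to supply the $\mathrm{ch}\cdot\sqrt{\mathrm{Td}}$ translation and the injectivity-of-base-change argument to complete the gap you flagged; without them the ``unwinding'' you describe does not close.
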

Roughly speaking, Theorem \ref{thm:new1} shows that the bridge between the commutative and the noncommutative world is compatible with Weil restriction. Since $\Psi$ is fully-faithful, the functor $\cR_{l/k}^\NC$ should then be considered as a generalization of~$\cR^\ast_{l/k}$.

Let us denote by $\Num_R(k)$ and $\Num^\ast_R(k)$ the Grothendieck's categories of numerical motives (see Manin \cite{Manin}), and by $\NNum_R(k)$ the category of noncommutative numerical motives (see \S\ref{sec:NCmotives}). As explained in \cite[Thm.~1.13]{AJM}, the above functor $\Psi$  descends to an $R$-linear fully-faithful $\otimes$-functor $\Psi_{\mathrm{num}}: \Num_R^\ast(k) \to \NNum_R(k)$. Under these notations, our affirmative answer to question Q4 is the following:
\begin{theorem}\label{thm:new22}
The functors of Proposition \ref{prop:binomial} descend to $\otimes$-functors:
$$
\Num_R(l)\stackrel{\cR_{l/k}}{\too} \Num_R(k) \,\,\,\, \Num_R^\ast(l) \stackrel{\cR^\ast_{l/k}}{\too} \Num_R^\ast(k) \,\,\,\, \NNum_R(l) \stackrel{\cR_{l/k}^\NC}{\too} \NNum_R(k)\,.
$$
Moreover, we have the following commutative diagram:
\begin{equation}\label{eq:diagram-num}
\xymatrix{
\Num_R^\ast(l) \ar[rr]^-{\Psi_{\mathrm{num}}} \ar[d]_-{\cR^\ast_{l/k}}&& \NNum_R(l) \ar[d]^-{\cR^{\NC}_{l/k}} \\
\Num_R^\ast(k) \ar[rr]_-{\Psi_\mathrm{num}} && \NNum_R(k)\,.
}
\end{equation}
\end{theorem}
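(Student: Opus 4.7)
The plan is to show that each of the three Weil restriction functors from Proposition \ref{prop:binomial} sends numerically trivial morphisms to numerically trivial morphisms; the diagram \eqref{eq:diagram-num} then falls out of \eqref{eq:diagram-Chow} by composing with the canonical projections $\pi \colon \Chow_R^\ast(-) \to \Num_R^\ast(-)$ and $\pi \colon \NChow_R(-) \to \NNum_R(-)$, together with the defining relation $\Psi_{\mathrm{num}} \circ \pi = \pi \circ \Psi$ supplied by \cite[Thm.~1.13]{AJM}.

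I would treat the noncommutative functor $\cR_{l/k}^\NC$ first, the arguments for $\cR_{l/k}$ and $\cR_{l/k}^\ast$ being entirely parallel. The structural input is that after base change along $k \to L$, the Weil restriction decomposes as a tensor product of Galois conjugates followed by taking $G$-invariants; at the motivic level this means that for every morphism $f$ of $\NChow_R(l)$, the morphism $\cR_{l/k}^\NC(f)_L$ is assembled from $\bigotimes_{\sigma \in G/H} {}^\sigma(f_L)$ by operations encoding the $G$-action. Since $\NChow_R(L)$ is a rigid symmetric monoidal $R$-linear category, the $\otimes$-ideal of numerically trivial morphisms is stable under tensoring with arbitrary morphisms and under splitting off direct summands; hence, once we know that numerical triviality of $f$ in $\NChow_R(l)$ propagates to numerical triviality of $f_L$ in $\NChow_R(L)$ (the key subtlety, addressed below), it follows that $\cR_{l/k}^\NC(f)_L$ is numerically trivial in $\NChow_R(L)$. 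Combined with the conservativity of $-\otimes_k L$ on numerical equivalence---a formal consequence of the fact that symmetric monoidal functors preserve traces, so that $\mathrm{tr}(gf) = \mathrm{tr}((gf)_L)$ for every $g$ in $\NChow_R(k)$---one concludes that $\cR_{l/k}^\NC(f)$ itself is numerically trivial in $\NChow_R(k)$.

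The main obstacle I anticipate is thus the preservation (as opposed to reflection) of numerical equivalence under the separable base change $l \to L$, which is the step where the binomial ring hypothesis plays an essential role: a binomial ring is torsion-free and embeds into its $\bbQ$-algebra envelope $R \otimes_\bbZ \bbQ$, reducing the question to $\bbQ$-coefficients, where the transfer $\mathrm{tr}_{L/l}$ (inverting the degree $[L:l]$) implements the classical argument; lifting back uses that the binomial operators already employed in Proposition \ref{prop:binomial} provide the necessary integrality. Once this step is in place, the same recipe handles $\cR_{l/k}$ and $\cR_{l/k}^\ast$ inside $\Chow_R(L)$ and $\Chow_R^\ast(L)$, and the commutative diagram \eqref{eq:diagram-num} is obtained by projecting each vertex of \eqref{eq:diagram-Chow} onto its numerical quotient and invoking Theorem \ref{thm:new1}.
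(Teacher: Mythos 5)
Your plan hits the same structural notes as the paper's proof --- push everything to $L$, exploit the factorisation $\cR^\NC_{l/k}(-)_L \simeq \bigotimes_{\sigma\in G/H}{}^{\sigma\!}(-)_L$, use the $\otimes$-ideal property of $\cN$, and get back down to $k$ via conservativity (= faithfulness) of base change on numerical motives, which is exactly Proposition \ref{prop:base1}. However, there is a genuine gap in the framing, and it is the very point that all the binomial-ring machinery in the paper exists to handle: the functors $\cR_{l/k},\cR^\ast_{l/k},\cR^\NC_{l/k}$ are \emph{not additive}; on Hom-groups they are polynomial maps of degree $d$. For such a functor, the criterion ``$\cR$ sends numerically trivial morphisms to numerically trivial morphisms'' ($f\in\cN\Rightarrow\cR(f)\in\cN$) is strictly weaker than the descent criterion you actually need, namely $f_1\equiv f_2 \pmod\cN \Rightarrow \cR(f_1)\equiv\cR(f_2)\pmod\cN$, because $\cR(f_1)-\cR(f_2)\neq\cR(f_1-f_2)$ in general. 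Relatedly, the identity $\cR^\NC_{l/k}(f)_L = \bigotimes_\sigma{}^{\sigma\!}(f_L)$ that you feed into the $\otimes$-ideal argument only holds when $f=[\mathrm{B}]$ is an \emph{effective} class, i.e.\ the class of an actual bimodule; it is not a statement about the polynomial extension to all of $K_0$, and a numerically trivial element need not be effective.

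The paper closes this gap by formulating the preservation statement for \emph{pairs} of modules: given right $A$-modules $M,M'$ with $[M]_R\sim[M']_R$, show $[\otimes_\sigma{}^{\sigma\!}M_L]_R\sim[\otimes_\sigma{}^{\sigma\!}M'_L]_R$. This is the hypothesis of Proposition \ref{prop:properties}(ii) (applied to the composite of the Weil-restriction polynomial map with the quotient projection, exactly as in the proof of Theorem \ref{thm:new}), which is precisely what lets a polynomial map descend to a quotient group. To repair your argument you would need to replace ``$f$ numerically trivial'' by ``$[M]\sim[M']$ for modules $M,M'$'' throughout, verify $F([N\oplus M])\equiv F([N\oplus M'])$ for every auxiliary $N$, and then invoke the descent criterion for polynomial maps; only then does the factorisation through $L$, the multiplicativity of $\sim_{\mathrm{num}}$, and the faithfulness of $-\otimes_kL$ combine to give the conclusion. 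Your treatment of the diagram \eqref{eq:diagram-num} via the projections and $\Psi_{\mathrm{num}}\circ\pi=\pi\circ\Psi$ is essentially the same as the paper's appeal to fullness and essential surjectivity (up to summands) of the quotient functors, and is fine.
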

\begin{remark}[Other equivalence relations]
Theorem \ref{thm:new22} is proved not only for numerical motives but also for $\otimes$-nilpotence motives and homological motives; see~\S\ref{sec:thm22}.
\end{remark}
\section{Computations}\label{sec:computations}
Given an integer $n \geq 2$, consider the following $G$-action
\begin{eqnarray*}
 G \times \{G/H \to \{1, \ldots, n\}\} \to \{G/H \to \{1,\ldots, n\}\} && (\rho, \alpha) \mapsto  (\sigma \mapsto \alpha(\rho^{-1}\sigma))
\end{eqnarray*}
on the set of maps from $G/H$ to $\{1,\ldots, n\}$. Let us denote by $\cO(G/H,n)$ the associated set of orbits and by $\stab(\alpha) \subseteq G$ the stabilizer of $\alpha$. As explained by Karpenko in \cite[pages~83-84]{Karpenko}, we have an isomorphism\footnote{Note that $L^{\mathrm{stab}(\alpha)}$ depends only (up to isomorphism) on the orbit $\alpha \in \cO(G/H,n)$.}
\begin{equation}\label{eq:computation}
\cR_{l/k}(\underbrace{\mathrm{Spec}(l) \times \cdots \times \mathrm{Spec}(l)}_{n\text{-}\mathrm{copies}})\simeq \prod_{\alpha \in \cO(G/H,n)} \mathrm{Spec}(L^{\mathrm{stab}(\alpha)})\,. 
\end{equation}
\begin{remark}\label{rk:intermediate}
Given an intermediate field extension $l/l'/k$, let $H'$ be the subgroup of $G$ such that $L^{H'}\simeq l'$. Consider the following map
\begin{eqnarray*}
\alpha':G/H \to \{1, \ldots, n\} && \sigma H \mapsto \left\{
  \begin{array}{lcr}1  & \mathrm{if} & \sigma \in H'\\
    n & \mathrm{if} & \sigma \notin H' \\
  \end{array}
\right.\,.
\end{eqnarray*}
Since the stabilizer $\mathrm{stab}(\alpha')$ of $\alpha'$ is isomorphic to $H'$, we observe that the affine $k$-scheme $\mathrm{Spec}(l')$ appears on the right-hand side of \eqref{eq:computation}. This illustrates the highly non-additive behavior of the Weil restriction functor.
\end{remark}
Let $R$ be a binomial ring. By combining \eqref{eq:computation} with the first claim of the above Theorem \ref{thm:new1}, we hence obtain the following motivic computation:
\begin{equation}\label{eq:motivic-computation}
\cR_{l/k}^\NC(\underbrace{U_R(l) \oplus \cdots \oplus U_R(l)}_{n\text{-}\mathrm{copies}})\simeq \bigoplus_{\alpha \in \cO(G/H,n)} U_R(L^{\mathrm{stab}(\alpha)})\,.
\end{equation}
\begin{example}[Finite dimensional algebras]
Let $A$ be a finite dimensional $\bbC$-algebra of finite global dimension. Examples include path algebras of finite quivers without oriented cycles and more generally their quotients by admissible ideals (\eg\ the quiver algebras of Khovanov-Seidel \cite{Khovanov-Seidel} and the close relatives of Rouquier-Zimmerman \cite{RZ}). As explained in \cite[Rk.~3.19]{TV}, $U_R(A)$ is isomorphic to the direct sum of $n$ copies of $U_R(\bbC)$ where $n$ stands for the number of simple (right) $A$-modules. Making use of \eqref{eq:motivic-computation}, we hence obtain the following computation:
$$ \cR^\NC_{\bbC/\bbR} (U_R(A)) \simeq \underbrace{U_R(\bbR) \oplus \cdots \oplus U_R(\bbR)}_{n\text{-}\mathrm{copies}} \oplus \underbrace{U_R(\bbC) \oplus \cdots \oplus U_R(\bbC)}_{\binom{n}{2}\text{-}\mathrm{copies}}\,.$$
\end{example}
\subsection*{Full exceptional collections}
Let $X$ be a smooth projective $l$-scheme whose category of perfect complexes $\perf(X)$ admits a full exceptional collection of length $n$; see Bondal-Orlov \cite[Def.~2.3]{BO1}. As explained in \cite[Lem.~5.1]{MT}, the noncommutative Chow motive $U(A_X)$ becomes then isomorphic to the direct sum of $n$ copies of $U(l)$.
\begin{corollary}\label{cor:main}
Let $R$ be a binomial ring and $X$ an $l$-scheme as above.
\begin{itemize}
\item[(i)] We have an isomorphism $U_R(A_{\cR_{l/k}(X)})\simeq \bigoplus_{\alpha} U_R(L^{\stab(\alpha)})$.
\item[(ii)] When $R$ is a $\bbQ$-algebra, we have $M^\ast_R(\cR^\ast_{l/k}(X))\simeq \bigoplus_\alpha M^\ast_R(\mathrm{Spec}(L^{\stab(\alpha)}))$.
\item[(iii)] When $R$ is a $\bbQ$-algebra, the Chow motive $M_R(\cR_{l/k}(X))$ is a direct summand of $\bigoplus_{i=0}^{d\mathrm{dim}(X)} \bigoplus_\alpha M_R(\mathrm{Spec}(L^{\stab(\alpha)})) \otimes \bbL^{\otimes i}$ where $\bbL$ stands for the Lefschtez motive. In particular, $M_R(\cR_{l/k}(X))$ is an Artin-Tate motive.
\end{itemize}
\end{corollary}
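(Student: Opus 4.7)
All three parts share a common input. By \cite[Lem.~5.1]{MT}, the full exceptional collection of length $n$ on $\perf(X)$ yields an isomorphism $U(A_X)\simeq U(l)^{\oplus n}$ in $\NChow(l)$, and hence $U_R(A_X)\simeq U_R(l)^{\oplus n}$ in $\NChow_R(l)$. For part~(i) the plan is simply to apply the functor $\cR^{\NC}_{l/k}$ of Proposition~\ref{prop:binomial} to this isomorphism: by construction of the motivic extension the left-hand side becomes $U_R(\cR^{\NC}_{l/k}(A_X))$, which Theorem~\ref{thm:new1} identifies (up to Morita equivalence) with $U_R(A_{\cR_{l/k}(X)})$; the right-hand side is precisely the motivic computation~\eqref{eq:motivic-computation}, giving $\bigoplus_\alpha U_R(L^{\stab(\alpha)})$.

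For part~(ii), use that $\Psi$ is $R$-linear and fully faithful, hence preserves finite direct sums and reflects isomorphisms. Combined with the identifications $\Psi(M^\ast_R(X))=U_R(A_X)$ and $\Psi(M^\ast_R(\mathrm{Spec}(l)))=U_R(l)$, this lifts the $\NChow_R(l)$-isomorphism above to $M^\ast_R(X)\simeq M^\ast_R\bigl(\coprod_n\mathrm{Spec}(l)\bigr)$ in $\Chow^\ast_R(l)$. Applying $\cR^\ast_{l/k}$ and using its definition together with Karpenko's scheme-level computation~\eqref{eq:computation} then yields the desired isomorphism $M^\ast_R(\cR_{l/k}(X))\simeq \bigoplus_\alpha M^\ast_R(\mathrm{Spec}(L^{\stab(\alpha)}))$ in $\Chow^\ast_R(k)$.

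Part~(iii) is the main obstacle: converting this $\Chow^\ast_R$-isomorphism into a summand statement in $\Chow_R$. Set $Y:=\cR_{l/k}(X)$, of dimension $D=d\dim(X)$, and let $f,g$ be the mutually inverse correspondences realizing the isomorphism of part~(ii). Decompose each component by codimension: $f_\alpha=\sum_{i=0}^D f_\alpha^{(i)}$ with $f_\alpha^{(i)}\in\mathrm{CH}^i_R(Y\times\mathrm{Spec}(L^{\stab(\alpha)}))$, and analogously $g_\alpha=\sum_{j=0}^D g_\alpha^{(j)}$. Under the dictionary
\[
\mathrm{CH}^i_R(Y\times\mathrm{Spec}(L^{\stab(\alpha)}))=\Hom_{\Chow_R}\bigl(M_R(Y),M_R(\mathrm{Spec}(L^{\stab(\alpha)}))\otimes\bbL^{\otimes i}\bigr)
\]
and the symmetric identification $\mathrm{CH}^{D-i}_R(\mathrm{Spec}(L^{\stab(\alpha)})\times Y)=\Hom_{\Chow_R}(M_R(\mathrm{Spec}(L^{\stab(\alpha)}))\otimes\bbL^{\otimes i},M_R(Y))$, the pieces $f_\alpha^{(i)}$ and $g_\alpha^{(D-i)}$ assemble into a pair of morphisms
\[
F\colon M_R(Y)\longrightarrow \bigoplus_{i=0}^D\bigoplus_\alpha M_R(\mathrm{Spec}(L^{\stab(\alpha)}))\otimes\bbL^{\otimes i}, \qquad G\colon \bigoplus_{i=0}^D\bigoplus_\alpha M_R(\mathrm{Spec}(L^{\stab(\alpha)}))\otimes\bbL^{\otimes i}\longrightarrow M_R(Y).
\]
By the composition formula for Chow correspondences, $G\circ F$ is the codimension-$D$ (diagonal) component of $g\circ f=\mathrm{id}_{M^\ast_R(Y)}$ and hence equals $\mathrm{id}_{M_R(Y)}$; this realizes $M_R(\cR_{l/k}(X))$ as a direct summand of the claimed object. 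The Artin-Tate assertion follows because Lefschetz twists of Artin motives remain Artin-Tate.

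The bulk of the work lies in the codimension/Tate-twist bookkeeping of part~(iii); parts~(i) and~(ii) follow cleanly by feeding the input from \cite[Lem.~5.1]{MT} into the main theorems and the scheme-level computation~\eqref{eq:computation}.
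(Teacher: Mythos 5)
Your proposal is correct and follows essentially the same route as the paper's (very terse) proof. Parts (i) and (ii) are exactly what the author has in mind: item (i) is \eqref{eq:motivic-computation} plus the first claim of Theorem~\ref{thm:new1}, and item (ii) is obtained from (i) via the bridge $\Psi$ (you choose to lift first and then apply $\cR^\ast_{l/k}$ rather than restricting first and then invoking the diagram \eqref{eq:diagram-Chow}, but given the commutativity of that square the two orders are interchangeable). For item (iii) the paper only says the statement ``is obtained using the different (codimension) components of the isomorphism of item (ii)''; your write-up supplies precisely the bookkeeping this hides — the codimension decompositions $f_\alpha=\sum_i f_\alpha^{(i)}$, $g_\alpha=\sum_j g_\alpha^{(j)}$, the pairing $i+j=D$, and the observation that $G\circ F$ is the codimension-$D$ component of $g\circ f=[\Delta_Y]$, hence the identity, so that $F\circ G$ is an idempotent exhibiting $M_R(\cR_{l/k}(X))$ as a direct summand. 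The only caveat is a conventions one: the identifications $\mathrm{CH}^i_R(Y\times Z_\alpha)\simeq\Hom(M_R(Y),M_R(Z_\alpha)\otimes\bbL^{\otimes i})$ and $\mathrm{CH}^{D-i}_R(Z_\alpha\times Y)\simeq\Hom(M_R(Z_\alpha)\otimes\bbL^{\otimes i},M_R(Y))$ depend on the chosen normalization (covariant/contravariant, sign of the Tate twist); with, say, Scholl's contravariant convention the twist index is $D-i$ rather than $i$ on the first one, and the labels of the pieces entering $F$ and $G$ shift accordingly. This is a relabeling, not a gap: the pairing $i+(D-i)=D$ and the conclusion are unaffected, but you should fix one convention and state both identifications consistently with it.
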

\begin{remark}
By construction, the Weil restriction $\cR_{l/k}(X)$ is of dimension $d \mathrm{dim}(X)$. Therefore, as explained in \cite[\S2]{Marcello}, the above items (ii)-(iii) hold more generally for every binomial ring $R$ such that $\frac{1}{(2 d\mathrm{dim}(X))!} \in R$.
\end{remark}
\begin{proof}
Item (i) follows from the combination of isomorphism \eqref{eq:motivic-computation} with the first claim of Theorem \ref{thm:new1}. Item (ii) follows from the combination of item (i) with the above commutative diagram \eqref{eq:diagram-Chow}. In what concerns item (iii), it is obtained using the different (codimension) components of the isomorphism of item (ii).
\end{proof}
Thanks to Beilinson, Kapranov, Kawamata, Kuznetsov, Manin-Smirnov, Orlov, and others (see \cite{Beilinson,kapranovquadric,Kawamata,kuznetfanothreefolds,MS,orlovprojbund}), Corollary \ref{cor:main} applies to projective spaces, rational surfaces, and moduli spaces of pointed stable curves of genus zero (in the case of an arbitrary base field $l$), and to smooth quadric hypersurfaces, Grassmannians, flag varieties, Fano threefolds with vanishing odd cohomology, and toric varieties (in the case where $l$ is algebraically closed and of characteristic zero). Conjecturally, it applies also to all the homogeneous spaces of the form $G/P$, with $P$ a parabolic subgroup of a semisimple algebraic group $G$; see Kuznetsov-Polishchuk \cite{KP}.
\begin{example}[Moduli spaces]
Let $X$ be the $\bbQ(\zeta_3)$-scheme $\overline{\cM}_{0,5}$, \ie the moduli space of $5$-pointed stable curves of genus zero. As proved by Manin-Smirnov in \cite[\S3.3]{MS}, $\perf(\overline{\cM}_{0,5})$ admits a full exceptional collection of length $7$. Making use of Corollary \ref{cor:main}, we hence obtain the following computations: 
$$U_R(A_{\cR_{\bbQ(\zeta_3)/\bbQ}({\overline{\cM}_{0,5}})})\simeq U_R(\bbQ)^{\oplus 7}\oplus U_R(\bbQ(\zeta_3))^{\oplus 21}\,.$$
$$M^\ast_R(\cR^\ast_{\bbQ(\zeta_3)/\bbQ}({\overline{\cM}_{0,5}}))\simeq M^\ast_R(\mathrm{Spec}(\bbQ))^{\oplus 7}\oplus M^\ast_R(\mathrm{Spec}(\bbQ(\zeta_3)))^{\oplus 21}\,.$$
\end{example}
\begin{remark}[Incompatibility]
By combining Corollary \ref{cor:main} with Remark \ref{rk:intermediate}, we observe that if $X$ is an $l$-scheme such that $\perf(X)$ admits a full exceptional collection, then $\perf(\cR_{l/k}(X))$ does {\em not} admits a full exceptional collection! Roughly speaking, Weil restriction is always incompatible with full exceptional collections.
\end{remark}
\begin{remark}[Generalizations]
Corollary \ref{cor:main} holds more generally for every smooth projective $l$-scheme $X$ such that $U_R(A_X)\simeq \bigoplus U_R(l)$. Other examples include quadrics (see \cite[\S3]{TV}), complex surfaces of general type (see Gorchinskiy-Orlov \cite[Props.~2.3 and 4.2]{GO}), and Severi-Brauer varieties (see \cite[\S3]{TV}).
\end{remark}
\subsection*{Central simple algebras}
Let us denote by $\mathrm{CSA}(k)$ the full symmetric monoidal subcategory of $\NChow(k)$ consisting of the objects $U(A)$ with $A$ a central simple $k$-algebra. Following \cite[Prop.~2.25]{separable}, we have natural isomorphisms ($\mathrm{ind}$=index)
\begin{equation}\label{eq:index}
\Hom_{\mathrm{CSA}(k)}(U(A),U(A'))\simeq \mathrm{ind}(A^\op \otimes A')\cdot \bbZ\,,
\end{equation}
under which the composition law of $\mathrm{CSA}(k)$ corresponds to multiplication. As explained by Riehm in \cite[\S5.4]{Riehm}, the assignment $A \mapsto (\otimes_{\sigma\in G/H} {}^{\sigma\!} A_L)^G$ preserves central simple algebras. Consequently, \eqref{eq:we} restricts to a $\otimes$-functor
\begin{equation}\label{eq:functor-CSA}
\cR^\NC_{l/k}:\mathrm{CSA}(l) \too \mathrm{CSA}(k)\,.
\end{equation}
\begin{theorem}\label{thm:new3}
Given central simple $l$-algebras $A$ and $A'$, the associated map 
$$ \Hom_{\mathrm{CSA}(l)}(U(A),U(B)) \too \Hom_{\mathrm{CSA}(k)}(U(\cR^\NC_{l/k}(A)),U(\cR^\NC_{l/k}(A')))$$
identifies, under the above isomorphisms \eqref{eq:index}, with the polynomial map  
\begin{eqnarray*}
\mathrm{ind}(A^\op \otimes  A')\cdot \bbZ \too \mathrm{ind}(\cR^\NC_{l/k}(A)^\op \otimes \cR^\NC_{l/k}(A'))\cdot \bbZ &&  n \mapsto n^d\,.
\end{eqnarray*}
\end{theorem}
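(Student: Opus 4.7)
The plan is to unravel how $\cR^\NC_{l/k}$ acts on Hom groups in $\NChow$, make the integer embedding of \eqref{eq:index} explicit as a reduced-rank map, and then carry out a direct Galois-descent computation of $k$-dimensions.

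I expect the main hurdle to be the first step: tracing the extension of Theorem~\ref{thm:new} down to the level of bimodules. Since the Kontsevich-style construction of $\NChow$ realizes Hom groups as $K_0$ of appropriate categories of bimodules, the induced functor on morphisms should be given by the natural bimodule-level operation $M \mapsto (\otimes_{\sigma \in G/H}{}^{\sigma\!}M_L)^G$. Indeed, if $M$ is an $(A,A')$-bimodule then $\otimes_\sigma {}^{\sigma\!}M_L$ is a module over $(\otimes_\sigma {}^{\sigma\!}A_L)^\op \otimes_L (\otimes_\sigma {}^{\sigma\!}A'_L)$, and using the identity $(V \otimes_L W)^G = V^G \otimes_k W^G$ for semi-linear $L$-modules, its $G$-invariants form a bimodule over $\cR^\NC_{l/k}(A)^\op \otimes_k \cR^\NC_{l/k}(A')$, which is the correct target algebra. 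Once this identification of the functor on morphisms is in hand, the induced map on $K_0$-groups sends $[M]$ to $[(\otimes_\sigma {}^{\sigma\!}M_L)^G]$.

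Next, I would make the embedding of \eqref{eq:index} explicit. For a central simple $k$-algebra $C' = M_r(D)$ with $\deg D = e$, the unique simple $C'$-module $D^r$ has $k$-dimension $re^2 = \mathrm{ind}(C') \cdot \deg(C')$, so the isomorphism $K_0(C') \simeq \mathrm{ind}(C') \cdot \bbZ$ is realized by the reduced-rank map $[N] \mapsto \dim_k(N)/\deg(C')$.

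Finally, compute both sides via Galois descent. For any finite-dimensional $l$-vector space $V$, the $d$-fold tensor product $V_L \otimes_L \cdots \otimes_L V_L$ has $L$-dimension $(\dim_l V)^d$, and since taking $G$-invariants of a semi-linear $L$-module preserves the $L$-rank after passing to $k$, one finds
$$\dim_k\bigl(V_L \otimes_L \cdots \otimes_L V_L\bigr)^G = (\dim_l V)^d.$$
Applied with $V = M$ and $V = C := A^\op \otimes_l A'$, together with the identity $\dim_l C = (\deg C)^2$, this yields $\dim_k \cR^\NC_{l/k}(M) = (\dim_l M)^d$ and $\deg(\cR^\NC_{l/k}(C)) = (\deg C)^d$. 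Combining these with the reduced-rank description of step two, the image of $n = \dim_l(M)/\deg(C)$ under $\cR^\NC_{l/k}$ equals
$$\frac{\dim_k \cR^\NC_{l/k}(M)}{\deg(\cR^\NC_{l/k}(C))} = \frac{(\dim_l M)^d}{(\deg C)^d} = n^d,$$
which is the asserted polynomial formula.
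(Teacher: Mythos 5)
Your proof is correct, and it takes a genuinely different route from the paper's. The paper proceeds by first recalling that the isomorphism \eqref{eq:index} sends $\mathrm{ind}$ to the class $[I]$ of the minimal left ideal, then uses rigidity and Morita equivalence to reduce to the case $A=l$, $A'=D$ a division algebra; the case $D=l$ is handled via the combinatorial Lemma~\ref{lem:computation} (computing $\cR^\NC_{l/k}(l^{\oplus n})\simeq k^{\oplus n^d}$), and the general case is then deduced by combining a single degree computation $\deg(\cR^\NC_{l/k}(D))=\mathrm{ind}(D)^d$ with compatibility with the composition law in $\mathrm{CSA}$. Your argument instead reinterprets \eqref{eq:index} as the reduced-rank map $[N]\mapsto \dim(N)/\deg(C)$ and then carries out a single uniform Galois-descent dimension count, showing that $\cR^\NC_{l/k}$ raises both $\dim(M)$ and $\deg(C)$ to the $d$-th power. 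This avoids the Morita reduction and the two-case split in the paper, trading it for the upfront bookkeeping of how the functor acts on $\rep(A,A')\simeq \cD_c(A^\op\otimes A')$ via $M\mapsto(\otimes_{\sigma}{}^{\sigma\!}M_L)^G$, which you correctly identify. The reduced-rank viewpoint is arguably cleaner because it makes the appearance of $n^d$ essentially a ratio of two applications of the same descent identity.

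One small thing worth stating explicitly: your dimension count pins down the map only on the submonoid of classes of honest modules (the positive cone of $K_0$), so you should invoke Proposition~\ref{prop:properties}(i) to conclude that this determines the polynomial map on all of $K_0$ and that the unique polynomial extension of $n\mapsto n^d$ from $\bbN$ is again $n\mapsto n^d$. The paper makes the same remark in the course of handling the case $D=l$, so this is not a genuine gap, just a sentence you should add for completeness.
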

Roughly speaking, Theorem \ref{thm:new3} shows that in the case of central simple algebras the highly non-additive behavior of the Weil restriction functor is completely determined by the degree of the field extension $l/k$. In Proposition \ref{prop:base2}, we describe also the (additive) behavior of the base-change functor $-\otimes_k l: \mathrm{CSA}(k) \to \mathrm{CSA}(l)$.
\begin{remark}[Corestriction]
Let $\mathrm{Br}(k)$ be the Brauer group of $k$. Recall from Riehm \cite[Thm.~11]{Riehm} that the corestriction homomorphism $\mathrm{cor}_{l/k}: \mathrm{Br}(l) \to \mathrm{Br}(k)$ is induced by the assignment $A \mapsto (\otimes_{\sigma\in G/H} {}^{\sigma\!} A_L)^G$. As proved in \cite[Thm.~9.1]{Twisted}, $U(A) \simeq U(A')$ if and only if $[A]=[A'] \in \mathrm{Br}(k)$. Consequently, we observe that the above $\otimes$-functor \eqref{eq:functor-CSA} ``categorifies'' the classical homomorphism $\mathrm{cor}_{l/k}$.
\end{remark}
\subsection*{Notations} Throughout the article, $l/k$ denotes a finite separable field extension of degree $d$, $L/k$ a finite Galois field extension containing $l$, $G$ the Galois group $\mathrm{Gal}(L/k)$, $H$ the subgroup of $G$ such that $L^H\simeq l$, and $G/H$ the $G$-set of left cosets of $H$ in $G$.  We will reserve the letter $R$ for a (binomial) ring of coefficients. Finally, the base-change functor from $l$ to $L$ will be denoted by $(-)_L$.
\subsection*{Acknowledgments} The author is very grateful to Joseph Ayoub, Antoine Touz{\'e}, and Michel Van den Bergh for discussions concerning equivalence relations on algebraic cycles, binomial rings, and Galois descent, respectively. He is also grateful to Roman Bezrukavnikov, Thomas Bitoun, Eric M. Friedlander, Henri Gillet, Dmitry Kaledin, Mikhail Kapranov, Nikita Karpenko, Max Karoubi, Andrei Suslin, and Marius Wodzicki, for useful conversations. The author would like also to thank the anonymous referee for his/her comments and suggestions which greatly allowed the improvement of the article. This work was initiated at the program ``Noncommutative Algebraic Geometry and Representation Theory'', MSRI, Berkeley,~2013.
\section{Differential graded preliminaries}\label{sec:dg}
Let $\cC(k)$ be the category of cochain complexes of $k$-vector spaces. A {\em differential graded (=dg) category $\cA$} is a category enriched over $\cC(k)$ and a {\em dg functor} $F:\cA\to \cA'$ is  a functor enriched over $\cC(k)$; consult Keller's ICM survey \cite{ICM}. Note that a dg algebra $A$ is a dg category with a single object. In what follows we write $\dgcat(k)$ for the category of (small) dg categories and $\dgalg(k)$ for the category of dg algebras.

Let $\cA$ be a dg category.  The category $\dgHo(\cA)$ has the same objects as $\cA$ and
morphisms $\dgHo(\cA)(x,y):=H^0\cA(x,y)$, where $H^0$
stands for degree zero cohomology. The opposite dg category $\cA^\op$ has the same objects as $\cA$ and complexes of morphisms $\cA^\op(x,y):=\cA(y,x)$.  A {\em right $\cA$-module} is a dg functor $M:\cA^\op \to \cC_\dg(k)$ with values in the dg category $\cC_\dg(k)$ of cochain complexes of $k$-vector spaces. Let us denote by $\cC(\cA)$ the category of right $\cA$-modules. The {\em derived category $\cD(\cA)$ of $\cA$} is defined as the localization of $\cC(\cA)$ with respect to the class of quasi-isomorphisms. Its full triangulated subcategory of compact objects will be denoted by $\cD_c(\cA)$.

The {\em tensor product $\cA\otimes\cA'$} of dg categories is defined as follows: the set of objects is the cartesian product and $(\cA\otimes\cA')((x,x'),(y,y')):= \cA(x,y) \otimes \cA'(x',y')$. This gives rise to a symmetric monoidal structure on $\dgcat(k)$ with $\otimes$-unit $k$. Given dg categories $\cA$ and $\cA'$, a {\em $\cA\text{-}\cA'$-bimodule} is a dg functor $\mathrm{B}: \cA \otimes \cA'^\op \to \cC_\dg(k)$. Standard examples are given by the $\cA\text{-}\cA$-bimodule
\begin{eqnarray}\label{eq:bimodule1}
\cA\otimes\cA^\op \to \cC_\dg(k) && (x,y) \mapsto \cA(y,x)
\end{eqnarray} 
and more generally by the $\cA\text{-}\cA'$-bimodule
\begin{eqnarray}\label{eq:bimodule2}
{}_F\mathrm{B}:\cA\otimes \cA'^\op \to \cC_\dg(k) && (x,x') \mapsto \cA'(x',F(x))
\end{eqnarray}
associated to a dg functor $F:\cA\to \cA'$. 

A dg functor $F:\cA\to \cA'$ is called a {\em Morita equivalence} if the restriction functor $\cD(\cA') \to \cD(\cA)$ is an equivalence of (triangulated) categories; see \cite[\S4.6]{ICM}. As proved in \cite[Thm.~5.3]{IMRN}, $\dgcat(k)$ carries a Quillen model structure whose weak equivalences are the Morita equivalences. Let us denote by $\Hmo(k)$ the associated homotopy category obtained. As explained in \cite[Cor.~5.10]{IMRN}, we have a bijection 
\begin{equation}\label{eq:bij}
\Hom_{\Hmo(k)}(\cA,\cA')\simeq \mathrm{Iso}\,\rep(\cA,\cA')\,,
\end{equation}
where the right-hand side stands for the set of isomorphism class of the full triangulated subcategory 
$\rep(\cA,\cA') \subset \cD(\cA^\op \otimes \cA')$ consisting of those
$\cA\text{-}\cA'$-bimodules $\mathrm{B}$ such that for every object $x \in\cA$ the right $\cA'$-module $\mathrm{B}(x,-)$ belongs to
$\cD_c(\cA')$. Under the above bijection \eqref{eq:bij}, the composition law of $\Hmo(k)$
corresponds to the (derived) tensor product of bimodules. Moreover, the
identity of an object $\cA$ is given by the class of the above $\cA\text{-}\cA$-bimodule \eqref{eq:bimodule1}.
\subsection*{Smoothness and properness}
Recall from Kontsevich \cite{IAS,Miami,finMot} that a dg category $\cA$ is called {\em smooth} if the $\cA\text{-}\cA$-bimodule \eqref{eq:bimodule1} belongs to $\cD_c(\cA^\op\otimes \cA)$ and {\em proper} if $\sum_i \mathrm{dim}\,H^i\cA(x,y)< \infty$ for any ordered pair of objects $(x,y)$. In what follows, we write $\spdgalg(k)$ for the category of smooth proper dg $k$-algebras. Thanks to \cite[Prop.~5.10]{CT1} and \cite[Thm.~4.12]{ICM}, every smooth proper dg category is Morita equivalent to a smooth proper dg algebra.
\section{Noncommutative motives}\label{sec:NCmotives}
Recall from \S\ref{sec:dg} the description of the homotopy category $\Hmo(k)$. Since the above
$\cA\text{-}\cA'$-bimodule \eqref{eq:bimodule2} belongs to
$\rep(\cA,\cA')$, we have the following functor
\begin{eqnarray}\label{eq:functor1}
\dgcat(k) \too \Hmo(k) &\cA \mapsto \cA & F \mapsto {}_F\mathrm{B}\,.
\end{eqnarray}
The tensor product of dg categories descends to $\Hmo(k)$ giving rise to a symmetric monoidal structure and making the above functor \eqref{eq:functor1} symmetric monoidal. The {\em additivization} of $\Hmo(k)$ is the additive category $\Hmo_0(k)$ with the same objects as $\Hmo(k)$ and morphisms $\Hom_{\Hmo_0(k)}(\cA,\cA'):=K_0\rep(\cA,\cA')$, where $K_0\rep(\cA,\cA')$ stands for the Grothendieck group of the triangulated category $\rep(\cA,\cA')$. The composition law is induced by the (derived) tensor product of bimodules. Note that we have a canonical functor
\begin{eqnarray}\label{eq:functor2}
\Hmo(k) \too \Hmo_0(k) &\cA \mapsto \cA& \mathrm{B} \mapsto [\mathrm{B}]\,.
\end{eqnarray}
Given a commutative ring of coefficients $R$, the {\em $R$-linearization} of $\Hmo_0(k)$ is the $R$-linear additive category $\Hmo_0(k)_R$ obtained by tensoring each abelian group of morphisms of $\Hmo_0(k)$ with $R$. We have an associated functor
\begin{eqnarray}\label{eq:functor3}
\Hmo_0(k) \too \Hmo_0(k)_R &\cA \mapsto \cA& [\mathrm{B}] \mapsto [\mathrm{B}]_R:=[\mathrm{B}]\otimes_\bbZ R\,.
\end{eqnarray}
The symmetric monoidal structure on $\Hmo(k)$ descends first to a bilinear symmetric monoidal structure on $\Hmo_0(k)$ and then to a $R$-linear bilinear symmetric monoidal structure on $\Hmo_0(k)_R$, making the above functors \eqref{eq:functor2}-\eqref{eq:functor3} symmetric monoidal. We hence obtain the following composition of $\otimes$-functors:
\begin{equation}\label{eq:composition-new}
\dgcat(k) \stackrel{\eqref{eq:functor1}}{\too} \Hmo(k) \stackrel{\eqref{eq:functor2}}{\too} \Hmo_0(k)\stackrel{\eqref{eq:functor3}}{\too} \Hmo_0(k)_R\,.
\end{equation}
\subsection*{Noncommutative Chow motives}
The category of {\em noncommutative Chow motives} with $R$-coefficients $\NChow_R(k)$ is defined as the idempotent completion of the full subcategory of $\Hmo_0(k)_R$ consisting of the smooth proper dg algebras. As explained in \cite[Thm.~5.8]{CT1}, $\NChow_R(k)$ is a rigid symmetric monoidal category. The restriction of \eqref{eq:composition-new} to smooth proper dg algebras gives rise to a $\otimes$-functor 
$$U_R: \spdgalg(k) \too \NChow_R(k)\,.$$
When $R=\bbZ$, we will write $\NChow(k)$ instead of $\NChow_\bbZ(k)$ and $U$ instead of $U_\bbZ$.

Given smooth proper dg algebras $A, A'$, the triangulated category $\rep(A,A')$ identifies with $\cD_c(A^\op \otimes A')$; see \cite[\S5]{CT1}. Consequently, we have the isomorphisms
\begin{equation}\label{eq:Hom-smooth}
 \Hom_{\NChow_R(k)}(U_R(A), U_R(A')) \simeq K_0\cD_c(A^\op \otimes A')=: K_0(A^\op\otimes A')_R\,.
 \end{equation}
\subsection*{Noncommutative $\otimes$-nilpotent motives}
Assume that $\bbQ \subseteq R$. Given an $R$-linear additive rigid monoidal category $(\cC,\otimes, {\bf 1})$, its $\otimes_{\mathrm{nil}}$-ideal is defined~as:
$$ \otimes_{\mathrm{nil}}(a,b):= \{ f \in \Hom_\cC(a,b)\,|\, f^{\otimes n}=0\,\,\mathrm{for}\,\,n \gg 0\}\,.$$
The category of {\em noncommutative $\otimes$-nilpotent motives} with $R$-coefficients $\NVoev_R(k)$ is defined as the idempotent completion of the quotient category $\NChow_R(k)/\otimes_{\mathrm{nil}}$.
\subsection*{Noncommutative homological motives}
Assume that $R$ is a field. As explained in \cite[Thm.~7.2]{Galois}, when $k/R$ (resp. $R/k$) periodic cyclic homology gives rise to an $R$-linear $\otimes$-functor with values in the category of $\bbZ/2$-graded vector spaces
\begin{eqnarray}\label{eq:HP}
& \NChow_R(k) \stackrel{HP^\pm}{\too} \mathrm{Vect}_{\bbZ/2}(k) & (\mathrm{resp.} \,\,\NChow_R(k) \stackrel{HP^\pm}{\too} \mathrm{Vect}_{\bbZ/2}(R))\,.
\end{eqnarray} 
Let us denote by $\mathrm{Ker}(HP^\pm)$ the associated $\otimes$-ideal. The category of {\em noncommutative homological motives} with $R$-coefficients $\NHom_R(k)$ is defined as the idempotent completion of the quotient category $\NChow_R(k)/\mathrm{Ker}(HP^\pm)$. 
\subsection*{Noncommutative numerical motives}
Given an $R$-linear additive rigid symmetric monodical category $(\cC,\otimes, {\bf 1})$, its $\otimes$-ideal $\cN$ is defined as 
$$ \cN(a,b):=\{f \in \Hom_\cC(a,b)\,\,|\,\,\forall g \in \Hom_\cC(b,a)\,\,\mathrm{we}\,\,\mathrm{have}\,\,\mathrm{trace}(g\circ f)=0\}\,,$$
where $\mathrm{trace}(-)$ stands for the categorical trace. The category of {\em noncommutative numerical motives} with $R$-coefficients $\NNum_R(k)$ is defined as the idempotent completion of the quotient category $\NChow_R(k)/\cN$.
\section{DG Galois descent}\label{sec:Galois}
In this section we extend the classical Galois descent theory to the differential graded setting; see Proposition~\ref{prop:Galois-descent}. Making use of it, we then introduce the noncommutative analogue of the Weil restriction functor; see \S\ref{sub:WeilNC}. 

For every $\sigma \in G$ we have the following $\otimes$-equivalence of categories
\begin{eqnarray*}
{}^{\sigma\!}(-): \cC(L) \stackrel{\simeq}{\too} \cC(L) && V \mapsto {}^{\sigma\!} V\,,
\end{eqnarray*}
where ${}^{\sigma\!} V$ is obtained from $V$ by restriction of scalars along $\sigma^{-1}:L\stackrel{\sim}{\to} L$. 
\begin{definition}\label{def:Galois}
A {\em $L/k$-Galois complex} is a complex of $L$-vector spaces $W$ equipped with a left $G$-action $ G \times W \to W, (\rho,w) \mapsto \rho(w)$, which is {\em skew-linear} in the sense that $\rho(\lambda)\cdot\rho(w)=\rho(\lambda\cdot w)$ for every $\lambda \in L$, $w \in W$, and $\rho \in G$.
\end{definition}
\begin{example}
Given a complex of $k$-vector spaces $V$, the complex of $L$-vector spaces $V \otimes_k L$, equipped with the skew-linear left $G$-action
\begin{eqnarray*}
G \times (V \otimes_kL) \too V\otimes_k L && (\rho,v \otimes \lambda) \mapsto v \otimes \rho(\lambda)\,,
\end{eqnarray*}
is a $L/k$-Galois complex.
\end{example} 
\begin{example}\label{ex:2}
Given a complex of $k$-vector spaces $V$, the complex of $L$-vector spaces $\otimes_{\sigma \in G} {}^{\sigma\!} V$, equipped with the skew-linear left $G$-action
\begin{eqnarray*}
G \times \otimes_{\sigma \in G} {}^{\sigma\!} V \too \otimes_{\sigma \in G} {}^{\sigma\!} V&& (\rho, \otimes_{\sigma \in G}v_\sigma) \mapsto \otimes_{\sigma \in G} v_{\rho^{-1}\sigma}\,,
\end{eqnarray*}
is a $L/k$-Galois complex. 
\end{example}
\begin{example}\label{ex:skew}
Given a complex of $l$-vector spaces $V$, the complex of $L$-vector spaces $\otimes_{\sigma \in G/H} {}^{\sigma\!} V_L$, equipped with the skew-linear left $G$-action
\begin{eqnarray*}
G \times \otimes_{\sigma \in G/H} {}^{\sigma\!} V_L \too \otimes_{\sigma \in G/H} {}^{\sigma\!} V_L && (\rho, \otimes_{\sigma \in G/H}v_\sigma) \mapsto \otimes_{\sigma \in G/H} v_{\rho^{-1}\sigma}\,,
\end{eqnarray*}
is a $L/k$-Galois complex. 
\end{example}
Let us denote by $\cC(L)^{\mathrm{Gal}}$ the category of $L/k$-Galois complexes and  $G$-equivariant morphisms. By construction we have a forgetful functor $\cC(L)^{\mathrm{Gal}} \to \cC(L)$. Moreover, the category $\cC(L)^{\mathrm{Gal}}$ carries a canonical symmetric monoidal structure making the forgetful functor symmetric monoidal. The $\otimes$-unit is the complex $L$ (concentrated in degree zero) equipped with the canonical skew-linear left $G$-action, and given $L/k$-Galois complexes $W$ and $W'$ the group $G$ acts diagonally on the underlying tensor product $W\otimes W'$.
\begin{lemma}\label{lem:lifting}
We have the following $\otimes$-functor
\begin{eqnarray}\label{eq:composed}
\cC(l) \too \cC(L)^{\mathrm{Gal}} && V \mapsto \otimes_{\sigma \in G/H} {}^{\sigma\!}V_L\,,
\end{eqnarray}
where $\otimes_{\sigma \in G/H} {}^{\sigma\!}V_L$ is equipped with the skew-linear left $G$-action of Example \ref{ex:skew}.
\end{lemma}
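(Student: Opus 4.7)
The plan is to exhibit the functor as the composition of three symmetric monoidal functors and then lift the output into $\cC(L)^{\mathrm{Gal}}$ by specifying the required skew-linear $G$-action. Concretely, fix once and for all a system of coset representatives $\sigma_1,\dots,\sigma_d \in G$ for $G/H$ and factor the assignment as
\[
\cC(l)\xrightarrow{(-)_L}\cC(L)\xrightarrow{\bigl({}^{\sigma_i}(-)\bigr)_i}\prod_{i=1}^d\cC(L)\xrightarrow{\otimes_L}\cC(L),
\]
in which extension of scalars, each conjugation ${}^{\sigma_i}(-)$, and the iterated tensor product over $L$ are all symmetric monoidal. Independence of the choice of representatives follows from the canonical isomorphism ${}^hV_L\cong V_L$, $v\otimes\lambda\mapsto v\otimes h(\lambda)$, valid for $h\in H$ because $H$ fixes $l=L^H$. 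Composition therefore gives a well-defined $\otimes$-functor $\cC(l)\to\cC(L)$ at the level of the underlying $L$-complexes.

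Next, equip the output with the $G$-action of Example~\ref{ex:skew}, $\rho\cdot(\otimes_\sigma v_\sigma):=\otimes_\sigma v_{\rho^{-1}\sigma}$. In terms of the chosen representatives, write $\rho\sigma_i=\sigma_{\rho\cdot i}\,h_{\rho,i}$ with $h_{\rho,i}\in H$, so that the action is the permutation of slots $i\mapsto \rho\cdot i$ composed with the canonical $H$-identifications ${}^{h_{\rho,i}}V_L\cong V_L$; the cocycle identity $h_{\rho\rho',i}=h_{\rho,\rho'\cdot i}\,h_{\rho',i}$ immediately yields the group-action axiom. For skew-linearity $\rho(\lambda w)=\rho(\lambda)\rho(w)$, place the scalar $\lambda$ on a single slot $\sigma_0$: then $\lambda$ acts on the $\sigma_0$-th factor ${}^{\sigma_0}V_L$ as $\sigma_0^{-1}(\lambda)$ on the underlying $V_L$, and after applying $\rho$ this content moves to the slot $\rho\sigma_0$; on the other side, $\rho(\lambda)$ placed on slot $\rho\sigma_0$ of $\rho(w)$ acts on ${}^{\rho\sigma_0}V_L$ as $(\rho\sigma_0)^{-1}(\rho(\lambda))=\sigma_0^{-1}(\lambda)$, matching identically. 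Thus the output lies in $\cC(L)^{\mathrm{Gal}}$, and for a morphism $f:V\to V'$ the induced map $\otimes_i{}^{\sigma_i}f_L$ is automatically $G$-equivariant since $f_L$ is $L$-linear and therefore commutes with every permutation and every $H$-identification.

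Finally, check monoidality in $\cC(L)^{\mathrm{Gal}}$. The comparison isomorphism is the composite
\[
\otimes_\sigma{}^\sigma(V\otimes_l V')_L\,\cong\,\otimes_\sigma\bigl({}^\sigma V_L\otimes_L{}^\sigma V'_L\bigr)\,\cong\,\bigl(\otimes_\sigma{}^\sigma V_L\bigr)\otimes_L\bigl(\otimes_\sigma{}^\sigma V'_L\bigr),
\]
the first iso coming from monoidality of $(-)_L$ and ${}^\sigma(-)$ and the second from reshuffling $\otimes_L$-factors; both are $G$-equivariant with respect to the diagonal action because the permutation of $\sigma$-indexed factors matches on both sides. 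For the unit, $\otimes_\sigma{}^\sigma L$ is one-dimensional over $L$ with basis $\otimes_\sigma 1$, and $L\to\otimes_\sigma{}^\sigma L$, $\lambda\mapsto\lambda(\otimes_\sigma 1)$, is a $G$-equivariant isomorphism onto the $\otimes$-unit of $\cC(L)^{\mathrm{Gal}}$. The main obstacle throughout is bookkeeping --- aligning conventions for cosets, the direction of restriction of scalars along $\sigma^{-1}$, and the $\rho^{-1}$ in the action formula --- but once alignment is made every verification reduces to the identity $(\rho\sigma)^{-1}\rho=\sigma^{-1}$ in the Galois group together with the formal symmetric monoidal structure of $\otimes_L$.
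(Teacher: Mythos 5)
Your proof is correct and follows the same route as the paper's: equip $\otimes_{\sigma\in G/H}{}^{\sigma\!}V_L$ with the permutation action of Example~\ref{ex:skew} and observe that the canonical unitor and multiplicativity isomorphisms are $G$-equivariant. The paper disposes of the well-definedness and equivariance with a single ``clearly''; you have filled in precisely the bookkeeping it suppresses (the independence of coset representatives via ${}^{h}V_L\cong V_L$ for $h\in H$, the cocycle identity $h_{\rho\rho',i}=h_{\rho,\rho'\cdot i}h_{\rho',i}$ verifying the action axiom, and the slot-by-slot skew-linearity check), all of which are accurate.
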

\begin{proof}
Clearly, every morphism $V \to V'$ in $\cC(l)$ gives rise to a $G$-equivariant morphism $\otimes_{\sigma \in G/H} {}^{\sigma\!} V_L \to \otimes_{\sigma \in G/H}{}^{\sigma\!} V_L'$. Hence,  \eqref{eq:composed} is well-defined. The naturality of the following $G$-equivariant isomorphisms 
\begin{eqnarray*}
L \stackrel{\sim}{\to} \otimes_{\sigma \in G/H}{}^{\sigma\!}L &&
(\otimes_{\sigma \in G/H}{}^{\sigma\!} V_L)\otimes(\otimes_{\sigma \in G/H}{}^{\sigma\!} V_L') \stackrel{\sim}{\to} \otimes_{\sigma \in G/H} {}^{\sigma\!}(V \otimes V')_L
\end{eqnarray*}
implies that the functor \eqref{eq:composed} is moreover symmetric monoidal.
\end{proof}
\begin{proposition}{(DG Galois descent)}\label{prop:Galois-descent}
We have a $\otimes$-equivalence of categories
\begin{equation}\label{eq:equivalence-cat}
\xymatrix{
\cC(L)^{\mathrm{Gal}} \ar@/^2ex/[d]^{(-)^G}_{\simeq\;} \\ 
\cC(k) \ar@/^2ex/[u]^{-\otimes_kL} \,,
}
\end{equation}
where $(-)^G$ stands for the $G$-invariants functor.
\end{proposition}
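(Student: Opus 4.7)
The plan is to reduce this proposition to the classical Galois descent statement for ordinary vector spaces, applied in each cohomological degree. First I would check that both functors are well-defined: for $V \in \cC(k)$, the differential of $V$ extends $L$-linearly to $V \otimes_k L$ and commutes with the skew-linear $G$-action (which only touches the $L$-factor), so $V \otimes_k L$ is indeed an $L/k$-Galois complex; conversely, for $W \in \cC(L)^{\mathrm{Gal}}$, the differential of $W$ is $G$-equivariant by functoriality, so the subcomplex $W^G$ inherits a differential, and on $G$-invariants the skew-linearity forces the $L$-action to factor through $L^G = k$, giving $W^G \in \cC(k)$.

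Next I would introduce the candidate unit $\eta_V : V \to (V \otimes_k L)^G$, $v \mapsto v \otimes 1$, and counit $\epsilon_W : W^G \otimes_k L \to W$, $w \otimes \lambda \mapsto \lambda \cdot w$. Both are evidently morphisms of complexes, natural in $V$ and $W$. Since the entire construction is performed degreewise in the underlying categories of $k$- and $L$-vector spaces, showing $\eta$ and $\epsilon$ are isomorphisms reduces to the classical Galois descent equivalence between $k$-vector spaces and $L$-vector spaces equipped with a skew-linear $G$-action. In that setting $\eta$ is an isomorphism because $L^G = k$, while $\epsilon$ is an isomorphism by the normal basis theorem (equivalently, by the $G$-equivariant $L$-algebra isomorphism $L \otimes_k L \simeq \prod_{\sigma \in G} L$ furnished by the étaleness of $L/k$).

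Finally I would equip $-\otimes_k L$ with its canonical symmetric monoidal structure via the natural $G$-equivariant isomorphism $(V \otimes_k V') \otimes_k L \simeq (V \otimes_k L) \otimes_L (V' \otimes_k L)$ together with the identification $k \otimes_k L \simeq L$ at the units. Having established that $-\otimes_k L$ is an equivalence, its quasi-inverse $(-)^G$ automatically inherits a compatible symmetric monoidal structure, yielding the desired $\otimes$-equivalence \eqref{eq:equivalence-cat}.

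The main obstacle is modest and essentially organizational: one must verify that every structural isomorphism (associators, unitors, unit and counit) is simultaneously compatible with the internal differentials and with the ambient skew-linear $G$-action, and that $L$-linearity on $G$-invariant subcomplexes genuinely collapses to $k$-linearity. These checks are routine once the classical degreewise Galois descent is in hand, and no genuine difficulty arises from the presence of a differential, since differentials enter only through functoriality in the abelian categories of $k$- and $L$-vector spaces.
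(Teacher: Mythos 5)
Your proof is correct and follows essentially the same route as the paper: reduce to the classical Galois descent equivalence for vector spaces with skew-linear $G$-action (which the paper cites from Knus--Ojanguren rather than re-deriving via the normal basis theorem as you do), then observe that both functors are defined degreewise and commute with the differentials, so the equivalence and its monoidal structure pass to categories of complexes.
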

\begin{proof}
If in the above Definition~\ref{def:Galois} we replace the word ``complex'' by the word ``module'', we recover the classical notion of $L/k$-Galois module $W$; see \cite[II.\S5]{Knus}. It consists of a $L$-vector space $W$ equipped with a skew-linear left $G$-action. Let $\mathrm{Vect}(L)^{\mathrm{Gal}}$ be the associated category of $L/k$-Galois modules. As proved in Knus-Ojanguren in \cite[II. Thm.~5.3]{Knus}, we have the following $\otimes$-equivalence of categories
\begin{equation}\label{eq:equiv-cat-aux}
\xymatrix{
\mathrm{Vect}(L)^{\mathrm{Gal}} \ar@/^2ex/[d]^{(-)^G}_{\simeq\;} \\ 
\mathrm{Vect}(k) \ar@/^2ex/[u]^{-\otimes_kL} \,.
}
\end{equation}
Now, note that a $L/k$-Galois complex is precisely the same data as a complex of morphisms in $\mathrm{Vect}(L)^{\mathrm{Gal}}$. Moreover, the symmetric monoidal structure on $\cC(L)^{\mathrm{Gal}}$ is induced by the symmetric monoidal structure on $\mathrm{Vect}(L)^{\mathrm{Gal}}$. Since both functors $-\otimes_kL$ and $(-)^G$ in \eqref{eq:equivalence-cat} are defined degreewise, we hence conclude that \eqref{eq:equivalence-cat} can be obtained from \eqref{eq:equiv-cat-aux} by passing to the category of complexes. This implies that 
\eqref{eq:equivalence-cat} is a $\otimes$-equivalence.
\end{proof}
By combining Proposition~\ref{prop:Galois-descent} with Lemma~\ref{lem:lifting} we obtain the $\otimes$-functor
\begin{eqnarray}\label{eq:func-composed}
\cC(l) \too \cC(k) && V \mapsto (\otimes_{\sigma \in G/H}{}^{\sigma\!} V_L)^G\,.
\end{eqnarray}
\begin{lemma}\label{lem:quasi-isomorphisms}
The above functor \eqref{eq:func-composed} preserves quasi-isomorphisms. Moreover, we have a canonical isomorphism
\begin{equation}\label{eq:can-iso}
(\otimes_{\sigma \in G/H}{}^{\sigma\!} V_L)^G \otimes_{(\otimes_{\sigma \in G/H}{}^{\sigma\!} A_L)^G} (\otimes_{\sigma \in G/H}{}^{\sigma\!} V_L')^G \stackrel{\sim}{\too} (\otimes_{\sigma \in G/H}{}^{\sigma\!} (V\otimes_A V')_L)^G
\end{equation}
for every dg $l$-algebra $A$, right $A$-module $V$, and left $A$-module $V'$.
\end{lemma}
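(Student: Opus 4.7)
My plan is to decompose the functor $V \mapsto (\otimes_{\sigma \in G/H} {}^{\sigma\!}V_L)^G$ into four operations and check that each preserves quasi-isomorphisms:
(i) base change $(-)_L : \cC(l) \to \cC(L)$, which is exact (indeed faithfully exact) since $L$ is free over $l$;
(ii) each $\sigma$-conjugation ${}^{\sigma\!}(-): \cC(L) \to \cC(L)$, which is exact because it is merely restriction of scalars along the field isomorphism $\sigma^{-1}$;
(iii) the iterated $L$-linear tensor product $\bigotimes_{\sigma \in G/H}$, which preserves quasi-isomorphisms in each variable by the K\"unneth formula over the field $L$ (everything is automatically flat);
(iv) the $G$-invariants functor $(-)^G : \cC(L)^{\mathrm{Gal}} \to \cC(k)$, which is exact because, by Proposition \ref{prop:Galois-descent}, it is an equivalence of categories.

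\textbf{The canonical isomorphism at the $L$-level.} Before taking $G$-invariants, I would produce a $G$-equivariant isomorphism in $\cC(L)^{\mathrm{Gal}}$. Ordinary base change gives $(V \otimes_A V')_L \isoto V_L \otimes_{A_L} V'_L$, and since each functor ${}^{\sigma\!}(-)$ is strong symmetric monoidal on $\cC(L)$, applying it converts this into ${}^{\sigma\!}(V \otimes_A V')_L \isoto {}^{\sigma\!}V_L \otimes_{{}^{\sigma\!}A_L} {}^{\sigma\!}V'_L$. A routine diagram chase with the universal property of the relative tensor product then yields
$$\bigotimes_{\sigma \in G/H}\bigl({}^{\sigma\!}V_L \otimes_{{}^{\sigma\!}A_L} {}^{\sigma\!}V'_L\bigr) \;\isoto\; \Bigl(\bigotimes_{\sigma\in G/H}{}^{\sigma\!}V_L\Bigr) \otimes_{\bigotimes_\sigma {}^{\sigma\!}A_L} \Bigl(\bigotimes_{\sigma\in G/H}{}^{\sigma\!}V'_L\Bigr).$$
All three sides inherit the skew-linear $G$-action of Example \ref{ex:skew} (permutation of the $\otimes$-factors), and the displayed isomorphism is manifestly $G$-equivariant.

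\textbf{Descent to $G$-invariants.} To conclude, I would invoke the fact that the $\otimes$-equivalence of Proposition \ref{prop:Galois-descent} is symmetric monoidal in both directions, so it sends algebra objects to algebra objects and their bimodules correspond. Since relative tensor products are defined as coequalizers of morphisms built from the monoidal product, and an equivalence of categories preserves colimits, one obtains $(W \otimes_B W')^G \isoto W^G \otimes_{B^G} W'^G$ naturally in $L/k$-Galois bimodule data $(W, B, W')$. Applied to $B = \bigotimes_\sigma {}^{\sigma\!}A_L$, $W = \bigotimes_\sigma {}^{\sigma\!}V_L$, and $W' = \bigotimes_\sigma {}^{\sigma\!}V'_L$, this upgrades the $L$-level isomorphism to the required isomorphism \eqref{eq:can-iso}.

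\textbf{Main obstacle.} The only non-formal point is the commutation of $(-)^G$ with the relative tensor product. I expect the cleanest argument to be via the monoidal equivalence (coequalizer preservation). As a back-up, one can descend the $L$-level isomorphism by faithful flatness: tensoring the candidate map $W^G \otimes_{B^G} W'^G \to (W \otimes_B W')^G$ with $L$ over $k$ recovers the isomorphism of the previous step, and since $L/k$ is faithfully flat the candidate was already an isomorphism.
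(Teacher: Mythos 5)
Your proof is correct and follows essentially the same strategy as the paper's: the K\"unneth formula together with the Galois descent equivalence \eqref{eq:equivalence-cat} gives preservation of quasi-isomorphisms, and the canonical isomorphism \eqref{eq:can-iso} is obtained by descending a $G$-equivariant isomorphism at the $L$-level (the paper's \eqref{eq:canonical}) through $(-)^G$. You do explicitly flag and resolve the one step the paper treats as automatic, namely that $(-)^G$ commutes with relative tensor products, \ie\ the identification $(W\otimes_B W')^G \simeq W^G \otimes_{B^G} (W')^G$; either of your suggested arguments (the symmetric monoidal equivalence preserving the coequalizers that compute relative tensor products, or faithfully flat descent along $k \to L$) closes this gap cleanly.
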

\begin{proof}
Since we are working over a field, the classical K{\"u}nneth formula holds. Hence, the above functor \eqref{eq:composed} preserves quasi-isomorphisms. In order to show that the functor $(-)^G:\cC(L)^{\mathrm{Gal}} \to \cC(k)$ also preserves quasi-isomorphisms, it suffices by equivalence \eqref{eq:equivalence-cat} to show that $-\otimes_kL:\cC(k) \to \cC(L)^{\mathrm{Gal}}$ preserves quasi-isomorphisms. This is clearly the case and so our first claim is proved.

Now, note that we have a canonical isomorphism of complexes of $L$-vector spaces
\begin{equation}\label{eq:canonical}
(\otimes_{\sigma \in G/H}{}^{\sigma\!} V_L) \otimes_{(\otimes_{\sigma \in G/H}{}^{\sigma\!} A_L)} (\otimes_{\sigma \in G/H}{}^{\sigma\!} V_L') \stackrel{\sim}{\too} \otimes_{\sigma \in G/H}{}^{\sigma\!} (V\otimes_A V')_L\,.
\end{equation}
The isomorphism \eqref{eq:canonical} is $G$-equivariant with respect to the skew-linear left $G$-action of Example \ref{ex:skew}; $G$ acts diagonally on the left-hand side. Therefore, the above isomorphism \eqref{eq:can-iso} can be obtained by applying the  functor $(-)^G$ to \eqref{eq:canonical}.
\end{proof}
\subsection{Weil restriction of dg algebras}\label{sub:WeilNC}
Since the above functor \eqref{eq:func-composed} is symmetric monoidal, it gives automatically rise to the following $\otimes$-functor
\begin{eqnarray}\label{eq:Weil-NC}
\cR_{l/k}^\NC:\dgalg(l) \too \dgalg(k) && A \mapsto (\otimes_{\sigma \in G/H}{}^{\sigma\!} A_L)^G\,.
\end{eqnarray}
Note that thanks to the above Lemma \ref{lem:quasi-isomorphisms}, \eqref{eq:Weil-NC} preserves quasi-isomorphisms.
\section{Proof of Theorem~\ref{thm:main1}}\label{sec:proof-main1}
We start by proving that the functor \eqref{eq:Weil-2} preserves proper dg algebras. Let $A$ be a proper dg $l$-algebra. Recall that by definition we have $\sum_i \mathrm{dim}_l H^i(A)< \infty$. The equalities $\mathrm{dim}_L H^i({}^{\sigma\!} A_L)=\mathrm{dim}_l H^i({}^{\sigma\!} A)=\mathrm{dim}_l H^i(A)$, combined with the K{\"u}nneth formula, imply that $\sum_i \mathrm{dim}_L H^i(\otimes_{\sigma \in G/H}{}^{\sigma\!} A_L) < \infty$. Therefore, thanks to the above equivalence of categories \eqref{eq:equivalence-cat}, the proof follows from the following equalities
$$ \sum_i \mathrm{dim}_k H^i(\cR^\NC_{l/k}(A)) = \sum_i \mathrm{dim}_k H^i((\otimes_{\sigma \in G/H}{}^{\sigma\!} A_L)^G) = \sum_i \mathrm{dim}_L H^i(\otimes_{\sigma \in G/H}{}^{\sigma\!} A_L)\,.$$

We now prove that \eqref{eq:Weil-2} preserves smooth dg algebras. The above functor \eqref{eq:func-composed} is symmetric monoidal and, thanks to Lemma~\ref{lem:quasi-isomorphisms}, it preserves quasi-isomorphisms. Therefore, given a dg $l$-algebra $A$, it gives rise to the (non-additive) functor:
\begin{eqnarray}\label{eq:functor-compact23}
\cD(A) \too \cD(\cR_{l/k}^\NC(A)) && M \mapsto (\otimes_{\sigma \in G/H}{}^{\sigma\!} M_L)^G\,.
\end{eqnarray}
\begin{proposition}\label{prop:compact1}
The above functor \eqref{eq:functor-compact23} preserves compact objects.
\end{proposition}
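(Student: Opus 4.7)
The plan is to use Galois descent at the level of dg module categories to reduce Proposition~\ref{prop:compact1} to the parallel claim for the simpler functor $M\mapsto \otimes_{\sigma\in G/H}{}^{\sigma\!}M_L$ taking values in $\cD(\otimes_{\sigma\in G/H}{}^{\sigma\!}A_L)$. Once reduced, the target will follow from two standard principles: base change along a finite field extension preserves compactness, and the tensor product over a field of compact dg modules is compact over the tensor product of the underlying dg algebras.

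First I would upgrade the equivalence of Proposition~\ref{prop:Galois-descent} to dg modules over $\cR^\NC_{l/k}(A)$. Galois descent yields a $G$-equivariant isomorphism $\cR^\NC_{l/k}(A)\otimes_k L \simeq \otimes_{\sigma\in G/H}{}^{\sigma\!}A_L$ of dg $L$-algebras, and for $M\in\cD(A)$ the compatibility $(\otimes_\sigma{}^{\sigma\!}M_L)^G\otimes_k L \simeq \otimes_\sigma{}^{\sigma\!}M_L$ is nothing but the forgetful side of descent applied to the $L/k$-Galois complex of Lemma~\ref{lem:lifting}. It then suffices to verify that $-\otimes_k L$ both preserves and reflects compactness over any dg $k$-algebra $B$: preservation is immediate (the functor is colimit-preserving and sends the compact generator $B$ to the compact generator $B\otimes_k L$), while reflection uses that $[L:k]$ is finite so that $X\otimes_k L \cong X^{\oplus [L:k]}$ as a $B$-module, whence $X$ is compact whenever $X\otimes_k L$ is.

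With the reduction in hand, I would conclude in two steps. The first step is that each factor ${}^{\sigma\!}M_L$ is compact over ${}^{\sigma\!}A_L$: the base-change functor $-\otimes_l L$ preserves compactness by the same argument as above, and conjugation ${}^{\sigma\!}(-)$ is a $\otimes$-equivalence of derived categories. The second step is a general fact: for dg $L$-algebras $B_1,\dots,B_n$ and compacts $N_i\in\cD_c(B_i)$, the external tensor product $N_1\otimes_L\cdots\otimes_L N_n$ is compact in $\cD(B_1\otimes_L\cdots\otimes_L B_n)$. Proceeding by induction on $n$, the functor $-\otimes_L N_2\otimes_L\cdots\otimes_L N_n:\cD(B_1)\to\cD(B_1\otimes_L\cdots\otimes_L B_n)$ is colimit-preserving (we are over a field) and, by the inductive hypothesis, sends the compact generator $B_1$ to a compact object; hence it preserves all compacts. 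Taking $N_\sigma:={}^{\sigma\!}M_L$ indexed by $\sigma\in G/H$ yields the required conclusion.

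The main obstacle will be making this reduction rigorous, namely upgrading Proposition~\ref{prop:Galois-descent} so that it delivers a $\otimes$-equivalence of derived categories of dg modules compatible with the module structures on both sides and with the $(-\otimes_k L) \dashv (-)^G$ adjunction at the module level. Once this Galois descent for modules is available, everything else reduces to formal manipulations with compact generators and colimit-preserving functors.
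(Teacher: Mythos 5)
Your proposal is correct and follows essentially the same strategy as the paper: reduce to showing that $M\mapsto\otimes_{\sigma\in G/H}{}^{\sigma}M_L$ lands in compact objects of $\cD(\otimes_{\sigma}{}^{\sigma}A_L)$, and then descend via the Galois isomorphism $(\otimes_{\sigma}{}^{\sigma}M_L)^G\otimes_kL\simeq\otimes_{\sigma}{}^{\sigma}M_L$ together with the fact that, for a finite extension $L/k$, the base-change functor $-\otimes_kL$ reflects compactness (this is exactly the paper's Lemma~\ref{lem:conservativity}). The paper proves the middle step by noting that the multi-triangulated functor $\prod_\sigma\cD({}^{\sigma}A_L)\to\cD(\otimes_\sigma{}^{\sigma}A_L)$ preserves compacts because it sends generators to a generator, whereas you argue by induction on the number of tensor factors using colimit-preservation; these are the same idea packaged differently.
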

\begin{proof}
Note first that the functor $\cD(A) \to \cD(A), M \mapsto M_L$, as well as the equivalences of categories $\cD(A_L) \stackrel{\simeq}{\to} \cD({}^{\sigma\!}A_L), M \mapsto {}^{\sigma\!} M$, preserve compact objets. Since the triangulated categories $\cD({}^{\sigma\!} A_L)$ and $\cD(\otimes_{\sigma \in G/H}{}^{\sigma\!} A_L)$ are generated by ${}^{\sigma\!} A_L$ and $\otimes_{\sigma \in G/H}{}^{\sigma\!} A_L$, respectively, the following multi-triangulated functor
\begin{eqnarray*}
\prod_{\sigma \in G/H}\cD({}^{\sigma\!} A_L) \too \cD(\otimes_{\sigma \in G/H}{}^{\sigma\!} A_L) && \{{}^{\sigma\!} M\}_{\sigma \in G/H} \mapsto \otimes_{\sigma \in G/H}{}^{\sigma\!} M
\end{eqnarray*}
also preserves compact objects. This implies that the following composition 
\begin{eqnarray}\label{eq:compact3}
\cD(A) \too \cD(\otimes_{\sigma \in G/H}{}^{\sigma\!} A_L) && M \mapsto \otimes_{\sigma \in G/H} {}^{\sigma\!} M_L
\end{eqnarray} 
preserves compact objects. Note that Proposition \ref{prop:Galois-descent} gives rise to the isomorphisms
\begin{eqnarray*}
(\otimes_{\sigma \in G/H}{}^{\sigma\!}A_L)^G\otimes_kL \simeq \otimes_{\sigma \in G/H} {}^{\sigma\!} A_L && (\otimes_{\sigma \in G/H} {}^{\sigma\!}M_L)^G\otimes_k L \simeq \otimes_{\sigma \in G/H} {}^{\sigma\!}M_L\,.
\end{eqnarray*}
Thanks to Lemma \ref{lem:conservativity} below (with $A$ replaced by $(\otimes_{\sigma \in G/H}{}^{\sigma\!}A_L)^G)$, the proof follows now from the fact that the above functor \eqref{eq:compact3} preserves compact objects.
\end{proof}
\begin{lemma}\label{lem:conservativity}
Given a dg $k$-algebra $A$, the following base-change functor 
\begin{eqnarray*}
\cD(A) \to \cD(A\otimes_k L) && M \mapsto M\otimes_k L
\end{eqnarray*}
reflects compact objects.
\end{lemma}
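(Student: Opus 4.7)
The plan is to reduce the statement to the elementary fact that a direct summand of a compact object is compact. Set $d := [L:k]$, which is finite, and choose a $k$-basis of $L$ so that $L \cong k^{\oplus d}$ as $k$-vector spaces. Then, as a left $A$-module, $A \otimes_k L \cong A^{\oplus d}$, and more generally for any right $A$-module $M$ one has a natural isomorphism of underlying $A$-modules
\begin{equation*}
M \otimes_k L \;\cong\; M^{\oplus d}.
\end{equation*}

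Next, I would analyze the restriction-of-scalars functor $\Res: \cD(A \otimes_k L) \to \cD(A)$ associated to the canonical map $A \to A \otimes_k L$. This functor is triangulated and commutes with arbitrary coproducts (coproducts in both derived categories are computed on underlying complexes of $k$-vector spaces). Since it sends the compact generator $A \otimes_k L$ of $\cD(A \otimes_k L)$ to $A^{\oplus d}$, which is compact in $\cD(A)$, and since the compact objects of $\cD(A \otimes_k L)$ form the smallest thick subcategory containing $A \otimes_k L$, an easy thick-subcategory argument (using that $\Res$ is triangulated and summand-closed on its image) shows that $\Res$ preserves compact objects.

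Now suppose $M \otimes_k L$ is compact in $\cD(A \otimes_k L)$. Applying $\Res$ and using the displayed isomorphism, we conclude that $M^{\oplus d}$ is compact in $\cD(A)$. Since compact objects in any triangulated category with coproducts are stable under direct summands, and $M$ is a direct summand of $M^{\oplus d}$, it follows that $M$ is compact in $\cD(A)$.

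The only mildly subtle step is verifying that $\Res$ preserves compact objects; all the rest is bookkeeping. One could alternatively phrase this step via the adjunction $(-\otimes_k L) \dashv \mathrm{Hom}_k(L,-)$, observing that $\mathrm{Hom}_k(L,-)$ commutes with coproducts because $L$ is a finitely generated projective $k$-module, hence the left adjoint $\Res$ (which here coincides with $-\otimes_k L$ on the nose up to the identification $L\cong k^{\oplus d}$) preserves compact objects.
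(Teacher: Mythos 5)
Your proof is correct and follows essentially the same route as the paper's: restrict scalars along $A \to A\otimes_k L$ (using that $L/k$ is finite-dimensional) to see that $M\otimes_k L$ is compact in $\cD(A)$, then observe that $M$ is a direct summand of the underlying $A$-module of $M\otimes_k L$ (the paper uses a splitting $L\simeq k\oplus C$; you use the full decomposition $L\simeq k^{\oplus d}$), and conclude by thickness of compacts. Your added justification that restriction of scalars preserves compacts is a welcome expansion of a step the paper leaves implicit, though the closing adjunction remark is loosely phrased: the relevant adjunction is $\Res \dashv \mathrm{Hom}_A(A\otimes_k L,-)$, with $\Res$ being a \emph{right} adjoint of base-change and a \emph{left} adjoint of coinduction, and $\Res$ is certainly not $-\otimes_k L$ as a functor, even if their composition with base-change agrees.
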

\begin{proof}
Assume that $M\otimes_k L \in \cD_c(A\otimes_k L)$. Since the field extension $L/k$ is finite dimensional, we have $M\otimes_k L \in \cD_c(A)$. Moreover, the choice of a splitting $L \simeq k \oplus C$ of $k$-vector spaces, allows us to express $M$ as a direct summand of $M\otimes_k L$. Therefore, since $M\otimes_k L \in \cD_c(A)$, we conclude that $M$ also belongs to $\cD_c(A)$.
\end{proof}
Thanks to Proposition~\ref{prop:compact1}, the above functor \eqref{eq:functor-compact23} restricts to compact objects 
\begin{eqnarray}\label{eq:functor-compact2}
\cD_c(A) \too \cD_c(\cR_{l/k}^\NC(A))&& M \mapsto (\otimes_{\sigma \in G/H}{}^{\sigma\!} M_L)^G\,.
\end{eqnarray}
Assume now that the dg $l$-algebra $A$ is smooth. Recall that by definition the bimodule \eqref{eq:bimodule1} (with $\cA=A$) belongs to $\cD_c(A^\op \otimes A)$. Therefore, the functor \eqref{eq:functor-compact2} (with $A$ replaced by $A^\op \otimes A$), combined with the fact that $\cR_{l/k}^\NC$ is a $\otimes$-functor, allows us to conclude that the bimodule \eqref{eq:bimodule1} (with $\cA=\cR_{l/k}^\NC(A)$) belongs to $\cD_c(\cR_{l/k}^\NC(A)^\op \otimes \cR_{l/k}^\NC(A))$. This shows that the dg $k$-algebra $\cR_{l/k}^\NC(A)$ is smooth.
\section{Grothendieck group of dg algebras}\label{sec:Grothendieck}
Let $A$ be a dg $k$-algebra. By definition, $K_0(A)$ is the Grothendieck group of the triangulated category $\cD_c(A)$. In this section we give a four-step description of $K_0(A)$, which will be used in the proof of Theorem~\ref{thm:new} and Proposition \ref{prop:binomial}.
\begin{itemize}
\item[(i)] Let $\cC_c(A)$ be the full subcategory of $\cC(A)$ consisting of those right $A$-modules which belong to $\cD_c(A)$. Note that $\cC_c(A)$ is stable under direct sums. Let us then write $M_0(A)$ for the associated monoid.
\item[(ii)] Let $K_0^\oplus(A) $ be the group completion of $M_0(A)$. By construction, we have an  homomorphism $\iota:M_0(A) \to K_0^\oplus(A)$.
\item[(iii)] Recall from \cite[Lem.~3.3]{ICM} that $\cC(A)$ carries an exact structure in the sense of Quillen \cite[\S2]{Quillen}. The conflations are the short exact sequences of $A$-modules
\begin{equation}\label{eq:exact}
\xymatrix{
0 \ar[r] & M \ar[r]^-{i} & M'
\ar[r]_-p & M'' \ar@/_2ex/@{-->}[l]_{s} \ar[r] & 0
}
\end{equation}
for which there exists a morphism $s$ of {\em graded} $A$-modules such that $p\circ s=\id_{M''}$. Note that $\cC_c(A)$ is stable with respect to these short exact sequences. We write $K_0^{ex}(A)$ for the quotient $K_0^\oplus(A)/\langle M'-M-M'' \rangle$, where $M, M', M''$ are as in \eqref{eq:exact}. By construction, we have an homomorphism $K_0^\oplus(A) \twoheadrightarrow K_0^{ex}(A)$.
\item[(iv)] Let $\cQ$ be the set of pairs $(M,M'') \in \cC_c(A) \times \cC_c(A)$ for which there exists a zig-zag of quasi-isomorphisms $M \stackrel{\sim}{\to} \cdot \stackrel{\sim}{\leftarrow} \cdot \cdots \cdot \stackrel{\sim}{\to} \cdot \stackrel{\sim}{\leftarrow} M''$ relating them.
\end{itemize}
\begin{lemma}\label{lem:quotient-group}
The Grothendieck group $K_0(A)$ is isomorphic to the quotient
\begin{equation*}
K_0^{ex}(A) /\langle M-M''\,|\, (M,M'')\in \cQ \rangle \,.
\end{equation*}
\end{lemma}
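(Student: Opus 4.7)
The approach is to construct a natural comparison homomorphism
$$
\varphi: K_0^{ex}(A) /\langle M-M''\mid (M,M'')\in \cQ \rangle \too K_0(A)
$$
sending the class of $M\in \cC_c(A)$ to the class of $M$ viewed as an object of $\cD_c(A)$, and to show it is an isomorphism. For well-definedness: direct sums in $\cC_c(A)$ remain direct sums in $\cD_c(A)$, so $\varphi$ descends from $M_0(A)$ to $K_0^\oplus(A)$; every conflation \eqref{eq:exact} is degreewise split and hence yields a distinguished triangle $M\to M'\to M''\to M[1]$ in $\cD_c(A)$, so $\varphi$ factors through $K_0^{ex}(A)$; finally, quasi-isomorphisms become isomorphisms in $\cD_c(A)$, so the $\cQ$-relations are respected.

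Surjectivity is immediate, since every object of $\cD_c(A)$ is by construction represented by some right $A$-module $M\in \cC_c(A)$. The crux of the argument is injectivity, which I would phrase as the construction of a well-defined inverse $[X]\mapsto [X]$ from $K_0(A)$: I must check that every relation $[Y]=[X]+[Z]$ coming from a distinguished triangle $X\to Y\to Z\to X[1]$ in $\cD_c(A)$ already holds in the quotient on the left. Using the $\cQ$-relations, I may first represent the morphism $X\to Y$ by a roof $X\xleftarrow{\sim} M \xrightarrow{f} M'$ with $f$ an honest morphism of $A$-modules in $\cC_c(A)$, so that the given triangle becomes isomorphic in $\cD_c(A)$ to the mapping-cone triangle $M\to M'\to \mathrm{Cone}(f)\to M[1]$, reducing the claim to this specific case.

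The main obstacle is then to derive this mapping-cone relation purely from conflations and $\cQ$-relations. Here I would invoke the canonical degreewise-split short exact sequence
$$
0\too M'\too \mathrm{Cone}(f)\too M[1]\too 0,
$$
which is a conflation in the sense of \eqref{eq:exact} and yields $[\mathrm{Cone}(f)]=[M']+[M[1]]$ in $K_0^{ex}(A)$. Applied to $f=\id_M$, the same construction gives $[\mathrm{Cone}(\id_M)]=[M]+[M[1]]$; since $\mathrm{Cone}(\id_M)$ is acyclic, hence $\cQ$-related to $0$, this forces $[M[1]]=-[M]$ in the quotient. Substituting, $[\mathrm{Cone}(f)]=[M']-[M]$, which is exactly the desired triangle relation $[M']=[M]+[\mathrm{Cone}(f)]$. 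This establishes injectivity of $\varphi$, and hence the lemma.
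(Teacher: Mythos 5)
Your proof is correct and follows essentially the same strategy as the paper: define the surjection $K_0^{ex}(A)/\langle\cQ\rangle \twoheadrightarrow K_0(A)$ and establish injectivity by verifying that the triangle relations in $K_0(A)$ are already forced by conflations and $\cQ$-relations. The only difference from the paper's terse injectivity argument is your choice of conflation: you use $0 \to M' \to \mathrm{Cone}(f) \to M[1] \to 0$ and must therefore first derive the shift relation $[M[1]] = -[M]$ from the acyclicity of $\mathrm{Cone}(\id_M)$, whereas the paper's statement that ``every triangle can be represented by a conflation'' is most naturally realized by the cylinder conflation $0 \to M \to \mathrm{Cyl}(f) \to \mathrm{Cone}(f) \to 0$ (with $\mathrm{Cyl}(f) \to M'$ the quasi-isomorphism), which gives the triangle relation directly without an extra step. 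Both variants are valid and of comparable length; yours fills in details the paper leaves implicit.
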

\begin{proof}
Note first that since $K_0(A)$ is a group, the above homomorphism $\iota$ factors through $K_0^\oplus(A)$. By construction, every conflation \eqref{eq:exact} becomes a distinguished triangle in $\cD_c(A)$. Hence, the homomorphism $\iota$ factors moreover through $K_0^{ex}(A)$. Clearly, quasi-isomorphic right $A$-modules give rise to the same element of $K_0(A)$. This implies that $\iota$ descends furthermore to a surjective group homomorphism
\begin{equation}\label{eq:descends}
\iota:K_0^{ex}(A)/\langle M- M'' \,|\, (M,M'') \in \cQ \rangle \twoheadrightarrow K_0(A)\,.
\end{equation}
Let us now show that \eqref{eq:descends} moreover injective. Note that two right $A$-modules $M, M'' \in \cC_c(A)$ become isomorphic in $\cD_c(A)$ if and only if there exists a zig-zag of quasi-isomorphisms $M \stackrel{\sim}{\to} \cdot \stackrel{\sim}{\leftarrow} \cdot \cdots \cdot \stackrel{\sim}{\to} \cdot \stackrel{\sim}{\leftarrow} M''$ relating them. Since every triangle in $\cD_c(A)$ can be represented (up to quasi-isomorphism) by a conflation in $\cC_c(A)$, we hence conclude that the homomorphism  \eqref{eq:descends} is injective.
\end{proof}
\section{Polynomial maps}
In this short section we recall from Eilenberg-MacLane \cite{EM} the notion of a polynomial map, which will be heavily used in the sequel. 

Let $f:M \to S$ be a map from an abelian monoid to an abelian group such that $f(0)=0$. Following Eilenberg-MacLane, we declare $\Delta^0f:=f$ and define the maps $\Delta^{k+1}f: M^{k+1} \to S, k \geq 0$, by the following recursive formula:
\begin{eqnarray*}
\Delta^{k+1}f(m_0, \ldots, m_{k-1},m_k,m_{k+1}) & := &  \Delta^kf(m_0, \ldots, m_{k-1}, m_k + m_{k+1})\\
&- &  \Delta^kf(m_0, \ldots, m_{k-1}, m_k) \\
&- &  \Delta^kf(m_0, \ldots, m_{k-1}, m_{k+1})\,.
\end{eqnarray*}
\begin{definition}
A map $f:M \to S$ is called {\em polynomial} if there exists an integer $N \geq 0$ such that $\Delta^Nf=0$. The smallest such $N$ is called the degree of $f$.
\end{definition}
\begin{proposition}{(see Joukhovitski \cite[Props. 1.2, 1.6 and 1.7]{Seva})}\label{prop:properties}
\begin{itemize}
\item[(i)] Every polynomial map $f:M \to S$ extends uniquely to a polynomial map $\overline{f}:M^\oplus \to S$ defined on the group completion of $M$.
\item[(ii)] Let $f$ be a polynomial map as in (i) and $\Omega \subseteq M \times M$. Assume that $f(m+m')=f(m+m'')$ for every $m \in M$ and $(m',m'') \in \Omega$. Under these assumptions, $\overline{f}$ factors through the quotient group $M^\oplus/\langle m'-m''\,|\, (m',m'') \in \Omega \rangle$.
\item[(iii)] Given polynomial maps $f : S \to S'$ and $f': S' \to S''$ between abelian groups, their composition $f' \circ f:S \to S''$ is also polynomial. 
\item[(iv)] Every polylinear map $M_1 \times \cdots \times M_n \to S$ is polynomial.
\end{itemize}
\end{proposition}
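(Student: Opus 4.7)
The plan is to address each of the four claims separately, in each case relying on the finite-difference calculus of $\Delta$ introduced just above, and closely tracking the original arguments of Eilenberg--MacLane~\cite{EM} and Joukhovitski~\cite{Seva}.

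For (i), the key observation is that if $f$ has degree $\le N$, then the iterated difference $\Delta^N f : M^N \to S$ is symmetric and multi-additive in each variable; this is immediate from the recursive definition of $\Delta^{k+1}f$ together with $\Delta^{N+1}f=0$. Any multi-additive map out of $M^N$ into an abelian group extends uniquely and multi-additively to $(M^\oplus)^N$. The extension $\overline{f}: M^\oplus \to S$ is then reconstructed from $\Delta^N \overline{f}$ together with the lower-order data $\Delta^k f$ evaluated at the origin for $k<N$, via a Newton-type inversion formula. Uniqueness follows because $M$ generates $M^\oplus$ as a group, and the reconstruction is forced by the identities of the calculus of differences.

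For (ii), I would reduce to (i). The hypothesis is exactly that $f$ descends to a polynomial map $\widetilde{f}: M/\!\sim \,\to S$, where $\sim$ is the monoid congruence on $M$ generated by $a+m' \sim a+m''$ for $a \in M$ and $(m',m'')\in\Omega$; polynomiality of $\widetilde{f}$ is inherited from $f$ because $\Delta$ is defined purely in terms of addition. The group completion of $M/\!\sim$ is canonically $M^\oplus/\langle m'-m''\mid (m',m'')\in\Omega\rangle$, so applying (i) to $\widetilde{f}$ and invoking its uniqueness clause produces the required factorisation of $\overline{f}$.

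For (iii), I would induct on the pair $(\deg f, \deg f')$ in the lexicographic order. The base cases (one of the maps additive or constant) are immediate. For the inductive step, I would rewrite $f(s_1+s_2) = f(s_1)+f(s_2)+\Delta f(s_1,s_2)$ and expand $f'$ by its finite-difference Taylor formula about $f(s_1)+f(s_2)$; every resulting summand is a composition whose degree pair is strictly smaller in the lexicographic order, so the induction hypothesis applies and kills $\Delta^N(f'\circ f)$ for $N \gg 0$. Finally, for (iv), a direct expansion of a polylinear map $g:M_1\times\cdots\times M_n \to S$, viewed on the direct sum $M_1\oplus\cdots\oplus M_n$, shows that $\Delta^{n+1}g$ is a sum of terms each of which vanishes by multilinearity in at least one slot. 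The main technical obstacle is (iii), where the rigorous setup of the finite-difference Taylor formula and the verification that every resulting term strictly decreases the degree bookkeeping must be carried out with care; the other three items are essentially formal consequences of the basic $\Delta$-calculus identities.
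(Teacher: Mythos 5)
The paper offers no proof of this proposition; it simply cites Joukhovitski \cite[Props.~1.2, 1.6 and 1.7]{Seva}, whose arguments go back to the cross-effect calculus of Eilenberg--MacLane. Your outline follows exactly that route, so there is no divergence of approach to report. Items (i), (ii) and (iv) are essentially sound: for (ii) in particular the reduction to (i) via the quotient monoid $M/\!\sim$ is correct, and the identification of $\mathrm{Gr}(M/\!\sim)$ with $M^\oplus/\langle m'-m''\rangle$ is justified by group completion being a left adjoint. A small bookkeeping slip: with the paper's convention $\Delta^k f\colon M^{k+1}\to S$, so if $\Delta^{N}f=0$ it is $\Delta^{N-1}f\colon M^{N}\to S$ that becomes multi-additive (once one proves symmetry of the top cross-effect, which you tacitly use). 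In (iv), for an $n$-linear map already $\Delta^{n}g=0$, not merely $\Delta^{n+1}g$, but your argument still gives the needed bound.

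The only place where I would press you is (iii), and you flag it yourself. The identity
$\Delta(f'\circ f)(s_1,s_2)=\Delta f'(f(s_1),f(s_2))+f'(\Delta f(s_1,s_2))+\Delta f'\bigl(f(s_1)+f(s_2),\Delta f(s_1,s_2)\bigr)$
is the correct starting point, but the right-hand side is not literally a composition $f'\circ f$ of lower lexicographic degree: the first and third terms are compositions of a \emph{bilinear-in-the-first-slot} cross-effect $\Delta f'$ with pairs of maps, and one must first show (by an auxiliary induction) that pre- and post-composing a polynomial map of several variables with polynomial maps in each variable stays polynomial, with an explicit degree bound. Joukhovitski's Proposition~1.7 runs precisely this sort of joint induction and obtains $\deg(f'\circ f)\le \deg(f)\cdot\deg(f')$. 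Your sketch names the right mechanism but does not yet carry the degree bookkeeping through; as submitted, (iii) is a plan rather than a proof.
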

\section{Binomial rings}\label{sec:binomial}
Recall from Xantcha \cite[\S1]{Xantcha} that a {\em binomial ring} is an unital commutative ring $R$ equipped with unitary operations $r \mapsto \binom{r}{n}, n \in \bbN$, subject to the following axioms:
\begin{itemize}
\item[(i)] $\binom{a+b}{n} = \sum_{p+q=n} \binom{a}{p}\binom{b}{q}$.
\item[(ii)] $\binom{ab}{n} = \sum_{m=0}^n \binom{a}{m} \sum_{q_1 + \cdots + q_m=n, q_i\geq 1} \binom{b}{q_1}\cdots \binom{b}{q_m}$.
\item[(iii)] $\binom{a}{m}\binom{a}{n} = \sum_{k=0}^n \binom{a}{m+k} \binom{m+k}{n}\binom{n}{k}$.
\item[(iv)] $\binom{1}{n}=0$ when $n \geq 2$.
\item[(v)] $\binom{a}{0}=1$ and $\binom{a}{1}=a$.
\end{itemize}
Equivalently, $R$ is a torsion-free commutative ring which is closed in $R_\bbQ$ under the operations $r \mapsto \frac{r(r-1) \cdots (r-n+1)}{n!}$. As explained in {\em loc. cit.}, examples include $\bbZ$, $\bbZ[1/r]$, $\bbQ$, the $p$-adic numbers $\bbZ_p$, and also every $\bbQ$-algebra. 
\subsection*{Compatibility with polynomial maps}
Let $f:S \to S'$ be a polynomial map between abelian groups. As proved in \cite[Thm.~10]{Xantcha}, $f$ extends to a polynomial map $f_R: S_R \to S'_R$. The following result will be used in the proof of Proposition~\ref{prop:binomial}.
\begin{lemma}\label{lem:binomial}
\begin{itemize}
\item[(i)] Given polynomial maps $f: S \to S'$ and $f':S' \to S''$ between abelian groups, we have $(f' \circ f)_R= f'_R \circ f_R$; 
\item[(ii)] Given polynomial maps $f: S \to S'$ and $h:S'' \to S'''$ between abelian groups, we have $(f \times h)_R = f_R \times h_R$;
\item[(iii)] Given a bilinear map $(g,g'):S \times S' \to S''$ between abelian groups, we have $(g,g')_R=(g_R,g'_R)$.
\end{itemize}
\end{lemma}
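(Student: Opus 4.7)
The plan is to reduce all three statements to the uniqueness of the polynomial extension guaranteed by Xantcha's Theorem 10 (cited immediately before the lemma). That theorem characterizes $f_R : S_R \to S'_R$ as the unique polynomial map which, when precomposed with the canonical map $S \to S_R$ and postcomposed with $S' \hookrightarrow S'_R$, recovers $f$. Consequently, to prove $F_R = G$ for some candidate $G$, it is enough to verify two things: that $G$ is itself a polynomial map, and that $G$ restricts to $F$ on the integral part $S \subset S_R$. Both checks will be short in every case, using Proposition~9.2 (the properties of polynomial maps recalled just before the lemma).

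For item (i), Proposition~9.2(iii) already tells us that $f' \circ f$ is polynomial, so $(f' \circ f)_R$ is defined. By the same Proposition~9.2(iii) applied over $R$ (or equivalently by the fact that compositions of polynomial maps are polynomial, which only uses the recursive definition of $\Delta^n$), the composite $f'_R \circ f_R$ is a polynomial map $S_R \to S''_R$. On $S \subset S_R$ it evaluates to $f'_R(f(\cdot)) = f'(f(\cdot))$, so uniqueness forces $f'_R \circ f_R = (f' \circ f)_R$. For item (ii), a direct computation with the finite difference operator yields $\Delta^N(f \times h) = (\Delta^N f, \Delta^N h)$, so $f \times h$ is polynomial of degree $\max(\deg f, \deg h)$; both $(f \times h)_R$ and $f_R \times h_R$ are then polynomial maps $(S \times S'')_R = S_R \times S''_R \to S'_R \times S'''_R$ restricting to $f \times h$ on $S \times S''$, and uniqueness gives the equality.

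For item (iii), bilinearity is a special case of polylinearity, so the bilinear map $(g,g')$ is polynomial by Proposition~9.2(iv) and $(g,g')_R$ makes sense. The candidate $(g_R, g'_R) : S_R \times S'_R \to S''_R$ is constructed by first extending in one slot and then in the other; it is $R$-bilinear by construction, hence polynomial over $R$, and it restricts to the original $(g,g')$ on $S \times S'$. Once more, the uniqueness of the polynomial extension yields the desired identity.

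The main technical point to be careful about is item (iii): one must check that the two-variable construction $(g_R, g'_R)$ is really a well-defined polynomial map on $S_R \times S'_R$ and not merely separately polynomial in each slot, but this follows from Proposition~9.2(iv) together with the fact that polynomiality is preserved under the binomial-ring extension of each variable. Beyond this, every step is a direct application of the uniqueness clause in Xantcha's theorem, so I expect no substantive obstacle.
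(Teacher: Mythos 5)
Your proposal is close in spirit to the paper's proof---both appeal to Xantcha's Theorem~10---but it rests on a uniqueness claim that is strictly stronger than what that theorem provides, and which is in fact false in general. Xantcha's theorem says that a polynomial map $f\colon S\to S'$ extends \emph{uniquely to a natural transformation} $f\otimes_\bbZ -\colon S\otimes_\bbZ-\Rightarrow S'\otimes_\bbZ-$ on the category of binomial rings. You instead invoke uniqueness of the extension for a single fixed ring $R$: ``$f_R\colon S_R\to S'_R$ is the unique polynomial map which\ldots recovers~$f$.'' These are different statements, and the per-ring version does not hold. For example, take $S=S'=\bbZ$, $f=\id$, and $R=\bbZ\times\bbZ$ (a perfectly good binomial ring). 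Then $S_R=S'_R=R$, and the additive map $g\colon R\to R$ defined by $g(a,b)=(a,a)$ is polynomial of degree~$1$ and restricts to the identity on the diagonal copy of $\bbZ$, yet $g\ne\id_R=f_R$. So knowing $g|_S=f$ does not pin down $g$, and the uniqueness step in each of your three items fails.

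The paper's proof sidesteps this by working at the level of natural transformations throughout: the composite $(f'\otimes_\bbZ-)\circ(f\otimes_\bbZ-)$, the product $(f\otimes_\bbZ-)\times(h\otimes_\bbZ-)$, and the bilinear extension are each themselves natural transformations whose $\bbZ$-component is, respectively, $f'\circ f$, $f\times h$, and $(g,g')$; the uniqueness clause in Xantcha's theorem (applied to the whole natural transformation, not to one ring) then identifies these with $(f'\circ f)\otimes_\bbZ-$, $(f\times h)\otimes_\bbZ-$, and $(g,g')\otimes_\bbZ-$, and evaluating at $R$ gives the claim. Your argument can be repaired by replacing the fixed-$R$ uniqueness check with the observation that your candidate $G$ (the composite, the product, or the $R$-bilinear extension) is the $R$-component of a natural transformation extending the original polynomial map, and then invoking the natural-transformation uniqueness.
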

\begin{proof}
Recall from \cite[Thm.~10]{Xantcha} that a map $f:S \to S'$ between abelian groups is polynomial (=numerical in the sense of \cite[Def.~5]{Xantcha}) if and only if it extends {\em uniquely} to a natural transformation $f\otimes_\bbZ-: S\otimes_\bbZ - \Rightarrow S'\otimes_\bbZ -$ between functors defined on the category of binomial rings. As a consequence, by composing $f \otimes_\bbZ-$ with $f'\otimes_\bbZ -$, we conclude that $f' \circ f$ is polynomial and that $(f'\circ f)_R=f'_R \circ f_R$. This proves item (i). By taking the product of $f \otimes_\bbZ-$ with $h \otimes_\bbZ -$, we conclude that $f \times h$ is polynomial and that $(f \times h)_R = f_R \times h_R$. This proves item (ii). In what concerns item (iii), the induced natural transformation $(g\times g')\otimes_\bbZ-:(S\times S')\otimes_\bbZ- \Rightarrow S''\otimes_\bbZ-$ allow us to conclude that $(g,g')_R=(g_R,g'_R)$. 
\end{proof}

\section{Proof of Theorem \ref{thm:new} and Proposition \ref{prop:binomial}}
Let $A$ be a dg $l$-algebra. Recall from \eqref{eq:functor-compact2} the construction of the functor
\begin{eqnarray}\label{eq:functor-compact}
\cD_c(A) \too \cD_c(\cR^\NC_{l/k}(A)) && M \mapsto (\otimes_{\sigma \in G/H}{}^{\sigma\!} M_L)^G\,.
\end{eqnarray}
\begin{proposition}\label{prop:polynomial}
The above (non-additive) functor \eqref{eq:functor-compact} gives rise to a polynomial map $K_0(A)\to K_0(\cR^\NC_{l/k}(A))$ of degree $d$.
\end{proposition}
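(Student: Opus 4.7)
The plan is to work directly with the non-additive functor $M\mapsto(\otimes_{\sigma\in G/H}{}^\sigma M_L)^G$ of \eqref{eq:functor-compact}, to show that the induced assignment
$$f\colon M_0(A)\longrightarrow K_0(\cR^\NC_{l/k}(A)),\qquad f(M):=\bigl[(\otimes_{\sigma\in G/H}{}^\sigma M_L)^G\bigr]$$
on the monoid of compact objects (step (i) of \S\ref{sec:Grothendieck}) is polynomial of degree $d$, and then to descend to $K_0(A)$ through the four-step description of Section~\ref{sec:Grothendieck} using Proposition~\ref{prop:properties}.

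The first step is a direct computation of $\Delta^k f$. Since ${}^\sigma(-)_L$ commutes with finite direct sums, for $M_0,\ldots,M_k\in\cC_c(A)$ one has
$$\otimes_{\sigma\in G/H}{}^\sigma(M_0\oplus\cdots\oplus M_k)_L=\bigoplus_{\phi\colon G/H\to\{0,\ldots,k\}}\otimes_\sigma{}^\sigma M_{\phi(\sigma),L}=:\bigoplus_\phi T_\phi,$$
and $G$ permutes the summands via $\rho\cdot T_\phi=T_{\rho\cdot\phi}$, where $(\rho\cdot\phi)(\sigma):=\phi(\rho^{-1}\sigma)$. DG Galois descent (Proposition~\ref{prop:Galois-descent}) then identifies the $G$-invariants with $\bigoplus_{[\phi]}T_\phi^{\mathrm{stab}(\phi)}$, summed over $G$-orbits. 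Feeding this into the formula $\Delta^k f(M_0,\ldots,M_k)=\sum_{\emptyset\neq J\subseteq\{0,\ldots,k\}}(-1)^{k+1-|J|}f(\sum_{j\in J}M_j)$ and regrouping by $\mathrm{Im}(\phi)\subseteq\{0,\ldots,k\}$, a standard inclusion-exclusion collapses everything except the surjective orbits, leaving
$$\Delta^k f(M_0,\ldots,M_k)=\sum_{[\phi]\colon G/H\twoheadrightarrow\{0,\ldots,k\}}\bigl[T_\phi^{\mathrm{stab}(\phi)}\bigr].$$
For $k\geq d$ no such surjection $G/H\twoheadrightarrow\{0,\ldots,k\}$ exists, so $\Delta^k f=0$; at $k=d-1$ the surjections are bijections with trivial stabilizer and the sum is generically nonzero, so $f$ is polynomial of degree exactly $d$ on $M_0(A)$.

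The second step is to descend $f$ from $M_0(A)$ to $K_0(A)$ following Lemma~\ref{lem:quotient-group}. Proposition~\ref{prop:properties}(i) first extends $f$ to a polynomial map $\bar f\colon K_0^\oplus(A)\to K_0(\cR^\NC_{l/k}(A))$ of the same degree. To descend through $K_0^{ex}(A)$ via Proposition~\ref{prop:properties}(ii), one must verify that $f(N\oplus M')=f(N\oplus M\oplus M'')$ for every conflation $0\to M\to M'\to M''\to 0$ and every $N\in M_0(A)$; I would prove this by the following filtration argument. Since conflations are graded-split, each ${}^\sigma M'_L$ carries a graded-split two-step filtration with associated graded ${}^\sigma M_L\oplus{}^\sigma M''_L$. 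Iterating this filtration over the $|S|$ slots $\sigma\in S:=\phi^{-1}(M')$ produces a $\mathrm{stab}(\phi)$-equivariant filtration of $T_\phi$ whose associated graded is $\bigoplus_{\tau\colon S\to\{M,M''\}}\otimes_\sigma{}^\sigma\tau(\sigma)_L$, and assembling these filtrations $G$-equivariantly across the $G$-orbit of $\phi$ gives a $G$-equivariant filtration of $\otimes_\sigma{}^\sigma(N\oplus M')_L$ whose associated graded is precisely $\otimes_\sigma{}^\sigma(N\oplus M\oplus M'')_L$; passing to $G$-invariants via Proposition~\ref{prop:Galois-descent} yields the sought equality of classes in $K_0(\cR^\NC_{l/k}(A))$. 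The remaining descent through the quasi-isomorphism relation is immediate from Lemma~\ref{lem:quasi-isomorphisms}, since both $\otimes_\sigma{}^\sigma(-)_L$ (by the K\"unneth formula) and $(-)^G$ preserve quasi-isomorphisms.

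The main obstacle will be this conflation step: the filtration argument is transparent in $K_0(\otimes_\sigma{}^\sigma A_L)$ (where it reduces to multi-exactness of the $d$-linear functor $(M_1,\ldots,M_d)\mapsto\otimes_\sigma{}^\sigma M_{\sigma,L}$), but in order for the equality to hold in $K_0(\cR^\NC_{l/k}(A))$ itself one must keep careful track of the $G$-action throughout the iterated filtration and appeal to DG Galois descent at the level of $G$-invariant classes.
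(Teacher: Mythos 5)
Your proof is correct, and it takes a genuinely different route for the core polynomiality step. The paper transports everything across the $\otimes$-equivalence of Proposition~\ref{prop:Galois-descent} to the category of $L/k$-Galois modules, where \eqref{eq:functor-compact} becomes $M\mapsto\otimes_\sigma{}^{\sigma\!}M_L$, the prototypical degree-$d$ polynomial functor between additive categories; it then simply cites Joukhovitski \cite[Prop.~1.8]{Seva} to conclude that the induced map on $K_0^\oplus$ is polynomial of degree~$d$. You instead compute the cross-effects $\Delta^k f$ by hand, decomposing $\otimes_\sigma{}^{\sigma\!}(\oplus_j M_j)_L$ into the monomials $T_\phi$, passing to $G$-invariants orbit-by-orbit, and using inclusion--exclusion to collapse $\Delta^k f$ onto surjective $\phi$; this re-proves the needed special case of Joukhovitski's result in a self-contained way, trading the citation for transparency. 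After that both arguments run through the four-step description of $K_0(A)$ from \S\ref{sec:Grothendieck}: extend to $K_0^\oplus$ by Proposition~\ref{prop:properties}(i), descend to $K_0^{ex}$ via a graded-split filtration for conflations, then to $K_0$ via preservation of quasi-isomorphisms. The one expositional gain of the paper's approach shows up precisely at the conflation step you flag as the main obstacle: because the paper works inside $\cC_c(\otimes_\sigma{}^{\sigma\!}A_L)^{\mathrm{Gal}}$ throughout (having identified the groups $M_0$, $K_0^\oplus$, $K_0^{ex}$, $K_0$ on both sides of the equivalence of Proposition~\ref{prop:Galois-descent} at the outset), the filtration $\mathrm{F}_i$ of $\otimes_\sigma{}^{\sigma\!}(N\oplus M')_L$ is automatically a filtration by Galois submodules and $G$-equivariance never needs to be tracked by hand, whereas your slot-by-slot filtration of $T_\phi$ followed by reassembly over the $G$-orbit is correct but carries that equivariance bookkeeping explicitly.
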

\begin{proof}
In order to simplify the exposition, we will restrict ourselves to the case where the field extension $l/k$ is Galois. The proof of the general case is similar.

Recall from Proposition~\ref{prop:Galois-descent} that we have a $\otimes$-equivalence of categories $(-)^G: \cC(L)^\mathrm{Gal} \stackrel{\simeq}{\to} \cC(k)$. Since $\cR^\NC_{l/k}(A):=(\otimes_{\sigma \in G}{}^{\sigma\!} A)^G$, we hence obtain an induced $\otimes$-equivalence of categories $\cC_c(\otimes_{\sigma \in G}{}^{\sigma\!} A)^{\mathrm{Gal}} \simeq \cC_c(\cR^\NC_{l/k}(A))$ and induced isomorphisms:
\begin{eqnarray*}
M_0(\otimes_{\sigma \in G}{}^{\sigma\!} A)^{\mathrm{Gal}}\simeq M_0(\cR_{l/k}^\NC(A)) &&
 K_0^\oplus(\otimes_{\sigma \in G}{}^{\sigma\!} A)^{\mathrm{Gal}} \simeq K_0^\oplus(\cR_{l/k}^\NC(A))\\
K_0^{ex}(\otimes_{\sigma \in G}{}^{\sigma\!} A)^{\mathrm{Gal}} \simeq  K_0^{ex}(\cR_{l/k}^\NC(A)) &&
 K_0(\otimes_{\sigma \in G}{}^{\sigma\!} A)^{\mathrm{Gal}} \simeq K_0(\cR_{l/k}^\NC(A))\,.
\end{eqnarray*}
The upperscripts $(-)^{\mathrm{Gal}}$ emphasize that these constructions are obtained from $\cC(l)^{\mathrm{Gal}}$ and {\em not} from $\cC(l)$. Under these notations, it is enough to show that
\begin{eqnarray}\label{eq:f1}
\cC_c(A) \too \cC_c(\otimes_{\sigma \in G}{}^{\sigma\!} A)^{\mathrm{Gal}} && M \mapsto \otimes_{\sigma \in G} {}^{\sigma\!} M
\end{eqnarray}
gives rise to a polynomial map $K_0(A) \to K_0(\otimes_{\sigma \in G}{}^{\sigma\!} A)^{\mathrm{Gal}}$. The above functor \eqref{eq:f1} is the standard example of a polynomial functor of degree $d$ between additive categories; see \cite[Def.~1.3]{Seva}. Hence, we conclude from \cite[Prop.~1.8]{Seva} that
\begin{eqnarray}\label{eq:induced1}
K_0^\oplus(A) \too K_0^\oplus(\otimes_{\sigma \in G}{}^{\sigma\!} A)^{\mathrm{Gal}} && M\mapsto \otimes_{\sigma \in G}{}^{\sigma\!} M
\end{eqnarray}
is a polynomial map of degree $d$. We now show that the following composition
\begin{equation}\label{eq:induced2}
K_0^\oplus(A) \stackrel{\eqref{eq:induced1}}{\too} K_0^\oplus (\otimes_{\sigma \in G}{}^{\sigma\!} A)^{\mathrm{Gal}} \twoheadrightarrow  K_0^{ex} (\otimes_{\sigma \in G}{}^{\sigma\!} A)^{\mathrm{Gal}}
\end{equation}
descends to $K_0^{ex}(A)$. Let $\Omega \subseteq M_0(A) \times M_0(A)$ be the set of pairs $(M',M\oplus M'')$ associated to the conflations \eqref{eq:exact}. Since $[M\oplus M'']=[M]+ [M'']$ in $K_0^\oplus(A)$, the group $K_0^{ex}(A)$ identifies with the quotient $K_0^\oplus(A)/\langle [M']-[M \oplus M'']\rangle$. Thanks to Proposition~\ref{prop:properties}(ii), it suffices then to show that for every $N \in \cC_c(A)$ the equality 
\begin{equation}\label{eq:searched}
[\otimes_{\sigma \in G}{}^{\sigma\!} (N \oplus M')] = [\otimes_{\sigma \in G}{}^{\sigma\!} (N\oplus M \oplus M'')]
\end{equation}
holds in $K_0^{ex}(\otimes_{\sigma \in G}{}^{\sigma\!} A)^{\mathrm{Gal}}$. Consider the following objects in $\cC_c(\otimes_{\sigma \in G}{}^{\sigma\!} A)^{\mathrm{Gal}}$:
\begin{eqnarray*}\label{eq:graded}
\mathrm{Gr}_i(\otimes_{\sigma \in G}{}^{\sigma\!}(N\oplus M \oplus M'')) := \bigoplus_{\underset{\# I =i}{I \subseteq G}} \otimes_{\sigma \in G} \left\{
  \begin{array}{lcr}{}^{\sigma\!} (N\oplus M)  & \mathrm{if} & \sigma \in I\\
    {}^{\sigma\!} (N \oplus M'') & \mathrm{if} & \sigma \notin I \\
  \end{array}
\right. & 0 \leq i \leq d\,.&
\end{eqnarray*}
Note that the tensor product $\otimes_{\sigma \in G}{}^{\sigma\!} (N\oplus M \oplus M'')$ is canonically isomorphic to the direct sum $\oplus_{i=0}^n \mathrm{Gr}_i(\otimes_{\sigma \in G}{}^{\sigma\!}(N\oplus M \oplus M''))$. As a consequence, we obtain
\begin{equation}\label{eq:equality1}
[\otimes_{\sigma \in G}{}^{\sigma\!} (N\oplus M \oplus M'')] = \sum_{i=0}^d [\mathrm{Gr}_i(\otimes_{\sigma \in G}{}^{\sigma\!}(N\oplus M \oplus M''))]\,.
\end{equation}
Note also that \eqref{eq:exact} gives rise to the following conflation
\begin{equation}\label{eq:conflation}
\xymatrix{
0 \ar[r] & N\oplus M \ar[r]^-{\id \oplus i} & N \oplus M'
\ar[r]_-{\id \oplus p} & N\oplus M'' \ar@/_2ex/@{-->}[l]_{\id \oplus s} \ar[r] & 0\,.
}
\end{equation}
Consider then the following objects:
\begin{eqnarray*}
\mathrm{F}_i(\otimes_{\sigma \in G}{}^{\sigma\!} (N\oplus M')) := \sum_{\underset{\# I =i}{I \subseteq G}} \otimes_{\sigma \in G} \left\{
  \begin{array}{lcr}{}^{\sigma\!} (N \oplus M)  & \mathrm{if} & \sigma \in I\\
    {}^{\sigma\!} (N\oplus M') & \mathrm{if} & \sigma \notin I \\
  \end{array}
\right. & 0 \leq i \leq d\,. &
\end{eqnarray*}
They form a decreasing filtration of  $\otimes_{\sigma \in G}{}^{\sigma\!} (N \oplus M)$ and give rise to the conflations
$$
\xymatrix@C=2em@R=1.5em{
0 \ar[d] &\\
\mathrm{F}_i(\otimes_{\sigma \in G}{}^{\sigma\!} (N \oplus M')) \ar[d] &\\
 \mathrm{F}_{i-1}(\otimes_{\sigma \in G}{}^{\sigma\!} (M\oplus M')) \ar[d] & 1\leq i \leq d\,,\\
 \mathrm{Gr}_{i-1}(\otimes_{\sigma \in G}{}^{\sigma\!} (N\oplus M \oplus M'')) \ar@/_3ex/@{-->}[u] \ar[d]&\\
 0 &}
$$
where the splitting is induced from the one of \eqref{eq:conflation}. As a consequence, we obtain\begin{equation}\label{eq:equality2}
[\otimes_{\sigma \in G}{}^{\sigma\!} (N \oplus M')] = [\mathrm{F}_0(\otimes_{\sigma \in G}{}^{\sigma\!} (N \oplus M')]+ \sum_{i=1}^d [\mathrm{Gr}_i(\otimes_{\sigma \in G}{}^{\sigma\!}(N\oplus M \oplus M''))]\,.
\end{equation}
Finally, since $\mathrm{F}_0(\otimes_{\sigma \in G}{}^{\sigma\!} (N \oplus M')) = \mathrm{Gr}_0(\otimes_{\sigma \in G}{}^{\sigma\!}(N\oplus M \oplus M''))$, the searched equality \eqref{eq:searched} follows from the combination of \eqref{eq:equality1} with \eqref{eq:equality2}. 

We now show that the following composition
$$ K_0^{ex}(A) \stackrel{\eqref{eq:induced2}}{\too} K_0^{ex}(\otimes_{\sigma \in G}{}^{\sigma\!} A)^{\mathrm{Gal}} \twoheadrightarrow K_0(\otimes_{\sigma \in G}{}^{\sigma\!} A)^{\mathrm{Gal}}$$
descends furthermore to $K_0(A)$. Let $\cQ$ be the set of pairs $(M,M'')$ described at item (iv) of \S\ref{sec:Grothendieck}. Thanks to Lemma~\ref{lem:quotient-group}, $K_0(A)$ identifies with the quotient  $K_0^{ex}(A)/\langle M- M''\,|\, (M,M'') \in \cQ \rangle$. Thanks once again to Proposition~\ref{prop:properties}(ii), it suffices to show that for every $N \in \cC_c(A)$ the following equality 
\begin{equation}\label{eq:equality}
[\otimes_{\sigma \in G}{}^{\sigma\!}(N\oplus M)] = [\otimes_{\sigma \in G}(N\oplus M'')]
\end{equation}
holds in $K_0(\otimes_{\sigma \in G}{}^{\sigma\!} A)^{\mathrm{Gal}}$. The functor \eqref{eq:f1} preserves quasi-isomorphisms. Therefore, since by hypothesis $M$ and $M'$ are related by a zig-zag of quasi-isomorphisms, we conclude that $N \oplus M$ and $N \oplus M''$ are also related by a zig-zag of quasi-isomorphisms. This implies the above equality \eqref{eq:equality}.
\end{proof}
Let $A, A' \in \spdgalg(l)$. By applying the above Proposition~\ref{prop:polynomial} to $A^\op \otimes A'$, and using the fact that $\cR_{l/k}^\NC$ is a $\otimes$-functor, we obtain a polynomial map of degree $d$
\begin{equation}\label{eq:poly2}
K_0(A^\op \otimes A') \too K_0(\cR_{l/k}^\NC(A)^\op \otimes \cR_{l/k}^\NC(A'))\,.
\end{equation}
We now have all the ingredients necessary for the definition of the $\otimes$-functor \eqref{eq:we}. By construction of Kontsevich's category of noncommutative Chow motives, it suffices to treat the case of smooth proper dg $l$-algebras. Recall from \eqref{eq:Hom-smooth} that 
\begin{eqnarray*}
\Hom_{\NChow(l)}(U(A),U(A')) &\simeq& K_0(A^\op \otimes A')\\
\Hom_{\NChow(k)}(U(\cR_{l/k}^\NC(A)),U(\cR_{l/k}^\NC(A'))) &\simeq& K_0(\cR_{l/k}^\NC(A)^\op \otimes \cR_{l/k}^\NC(A'))\,.
\end{eqnarray*}
The searched functor \eqref{eq:we} is defined by $A \mapsto \cR^\NC_{l/k}(A)$ on objects and by the above polynomial maps \eqref{eq:poly2} on morphisms. Since \eqref{eq:Weil-2} is a $\otimes$-functor, this construction is symmetric monoidal and preserves the identity morphisms. It remains then only to prove that the composition law is also preserved. Given $A,A',A'' \in \spdgalg(l)$, we need to show that the following diagram commutes (we wrote $\cR$ instead of $\cR_{l/k}^\NC$ in order to simplify the exposition):
\begin{equation*}
\xymatrix@C=5em@R=2em{
K_0(A^\op \otimes A') \times K_0(A'^\op \otimes A'') \ar[d]_-{\eqref{eq:poly2}\times \eqref{eq:poly2}}\ar[r]^-{-\otimes_{A'}-} & K_0(A^\op \otimes A'') \ar[d]^-{\eqref{eq:poly2}} \\
K_0(\cR(A)^\op \otimes \cR(A')) \times K_0(\cR(A')^\op \otimes \cR(A'')) \ar[r]_-{-\otimes_{\cR(A')}-} & K_0(\cR(A)^\op \otimes \cR(A''))\,.
}
\end{equation*}
Thanks to Proposition \ref{prop:properties}(iii)-(iv), both compositions are polynomial maps. In order to prove that these compositions agree, it suffices by Proposition \ref{prop:properties}(i) to show that their restriction to $M_0(A^\op \otimes A')\times M_0(A'^\op \otimes A'')$ is the same, i.e. that $\cR_{l/k}^\NC(\mathrm{B}) \otimes_{\cR^\NC_{l/k}(A')}\cR^\NC_{l/k}(\mathrm{B}')$ is isomorphic to $\cR^\NC_{l/k}(\mathrm{B} \otimes_{A'}\mathrm{B}')$
for every $\mathrm{B} \in \cD_c(A^\op \otimes A')$ and $\mathrm{B}' \in \cD_c(A'^\op \otimes A'')$. This follows from isomorphism~\eqref{eq:can-iso}.
\subsection*{Proof of Proposition \ref{prop:binomial}}
We prove that the functor \eqref{eq:we} extends to a $\otimes$-functor $\cR^\NC_{l/k}:\NChow_R(l) \to \NChow_R(k)$. The proof that the functors \eqref{eq:Karpenko} also extend to $\otimes$-functors $\cR_{l/k}$ and $\cR^\ast_{l/k}$ is (formally) the same and so we leave it to the reader. 

As mentioned in \S\ref{sec:binomial}, every polynomial map $S \to S'$ between abelian groups extends to a polynomial map $S_R \to S'_R$. Therefore, by tensoring \eqref{eq:poly2} with $R$, we obtain a polynomial map of degree $d$
\begin{equation}\label{eq:polynomial2}
K_0(A^\op \otimes A')_R \too K_0(\cR_{l/k}^\NC(A)^\op \otimes \cR_{l/k}^\NC(A'))_R\,.
\end{equation}
The searched functor $\cR^\NC_{l/k}:\NChow_R(l) \to \NChow_R(k)$ is defined by $A\mapsto \cR^\NC_{l/k}(A)$ on objects and by the above polynomial maps \eqref{eq:polynomial2} on morphisms. Similarly to the proof of Theorem \ref{thm:new}, this construction is symmetric monoidal and preserves the identity morphisms. Given $A, A', A'' \in \spdgalg(l)$, it remains then only to show that the following diagram\footnote{Once again we wrote $\cR$ instead of $\cR^\NC_{l/k}$ in order to simplify the exposition.} commutes:
$$
\xymatrix@C=3em@R2em{
K_0(A^\op \otimes A')_R \times K_0(A'^\op \otimes A'')_R \ar[d]_-{\eqref{eq:polynomial2}\times \eqref{eq:polynomial2}}\ar[r]^-{-\otimes_{A'}-} & K_0(A^\op \otimes A'')_R \ar[d]^-{\eqref{eq:polynomial2}} \\
K_0(\cR(A)^\op \otimes \cR(A'))_R \times K_0(\cR(A')^\op \otimes \cR(A''))_R \ar[r]_-{-\otimes_{\cR(A')}-} & K_0(\cR(A)^\op \otimes \cR(A''))_R\,.
}
$$
Thanks to Lemma~\ref{lem:binomial}, this latter diagram can be obtained by tensoring the preceeding commutative diagram with $R$. This concludes the proof.
\section{Perfect complexes on schemes}\label{sec:schemes}
Given a quasi-projective $k$-scheme $X$, let $\Mod(X)$ be the Grothendieck category of sheaves of $\cO_X$-modules and $\mathrm{Qcoh}(X)$ the full subcategory of quasi-coherent $\cO_X$-modules. We denote by $\cD(X):=\cD(\mathrm{Qcoh}(X))$ the derived category of $X$ and by $\perf(X)$ its full triangulated subcategory of perfect complexes. As proved by Bondal-Van den Bergh in \cite[Thms.~3.1.1 and 3.1.3]{BV}, $\perf(X)$ identifies with the category of compact objects in $\cD(X)$. 

Recall from \cite[\S4.4]{ICM} that the derived dg category $\cD_\dg(\cE)$ of an exact category $\cE$ is defined as the dg quotient $\cC_\dg(\cE)/\cA c_\dg(\cE)$ of the dg category of complexes over $\cE$ by its full dg subcategory of acyclic complexes. Let us denote by $\cD_\dg(X)$ the derived dg category $\cD_\dg(\cE)$, with $\cE:=\Mod(X)$, and by $\perf_\dg(X)$ its full dg subcategory of perfect complexes. Note that $\dgHo(\cD_\dg(X)) \simeq \cD(X)$ and $\dgHo(\perf_\dg(X))\simeq \perf(X)$. 

As proved by Bondal-Van den Bergh in \cite[Thm.~3.1.1 and Cor.~3.1.2]{BV}, the triangulated category of perfect complexes $\perf(X)$ is generated by a single object $\cG$. This naturally motivates the following definition:
\begin{definition}\label{def:dga-scheme}
Let $A_X:= \End_{\perf_\dg(X)}(\cG)$ be the associated dg $k$-algebra.
\end{definition}
The inclusion $A_X \to \perf_\dg(X)$ of dg categories is a Morita equivalence; see \cite[Lem.~3.10]{ICM}. Hence, given any other generator $\cG' \in \perf(X)$, we obtain a zig-zag of Morita equivalences $A_X \to \perf_\dg(X) \leftarrow A_X'$. This shows that the dg $k$-algebra $A_X$ is unique up to Morita equivalence.
\subsection*{Compact generators}
Let $f: X \to X'$ be a morphism of quasi-projective $k$-schemes. The following result(s) will be used in the proof of Theorem \ref{thm:new1}.
\begin{proposition}\label{prop:generator}
Assume the following:
\begin{itemize}
\item[(i)] The canonical morphism $\cO_{X'} \to {\bf R}f_\ast(\cO_X)$ in $\cD(X')$ admits a splitting.
\item[(ii)] The object ${\bf R}f_\ast(\cO_X)$ belongs to the smallest localizing (=closed under arbitrary direct sums) triangulated subcategory of $\cD(X')$ containing $\cO_{X'}$.
\end{itemize}
Given a perfect complex $\cG \in \perf(X')$, we have the following implication:
\begin{equation}\label{eq:implication}
{\bf L}f^\ast(\cG)\in \cD(X) \,\,\mathrm{compact}\,\,\mathrm{generator} \Rightarrow \cG\in \cD(X') \,\,\mathrm{compact}\,\,\mathrm{generator}\,.
\end{equation}
\end{proposition}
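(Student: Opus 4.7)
That $\cG$ is compact in $\cD(X')$ is immediate: by Bondal--Van den Bergh perfect complexes coincide with compact objects, so the hypothesis $\cG\in\perf(X')$ already gives compactness. The essential content is therefore the generation statement, i.e.\ that any $E\in\cD(X')$ with $\Hom_{\cD(X')}(\cG[n],E)=0$ for every $n\in\bbZ$ must vanish. My strategy is first to establish ${\bf L}f^\ast(E)=0$, and then to conclude using hypothesis~(i): tensoring the given splitting $\cO_{X'}\to{\bf R}f_\ast(\cO_X)$ with $E$, and invoking the projection formula
\[
{\bf R}f_\ast\,{\bf L}f^\ast(E)\,\simeq\, E\otimes^{\bf L}_{\cO_{X'}}{\bf R}f_\ast(\cO_X),
\]
exhibits $E$ as a direct summand of ${\bf R}f_\ast\,{\bf L}f^\ast(E)$, which vanishes once ${\bf L}f^\ast(E)=0$.

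To prove ${\bf L}f^\ast(E)=0$ I exploit that ${\bf L}f^\ast(\cG)$ is a compact generator of $\cD(X)$: it suffices to verify
\[
\Hom_{\cD(X)}({\bf L}f^\ast(\cG)[n],{\bf L}f^\ast(E))=0 \qquad\text{for every } n\in\bbZ.
\]
By the $({\bf L}f^\ast,{\bf R}f_\ast)$-adjunction together with the projection formula, this Hom-group is canonically isomorphic to $\Hom_{\cD(X')}(\cG[n],E\otimes^{\bf L}_{\cO_{X'}}{\bf R}f_\ast(\cO_X))$, and so the problem is reduced to a vanishing statement internal to $\cD(X')$.

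To obtain this vanishing I invoke hypothesis~(ii). Consider the full subcategory
\[
\cT \,:=\, \{\,F\in\cD(X') \;\mid\; \Hom_{\cD(X')}(\cG[n],E\otimes^{\bf L}_{\cO_{X'}}F)=0 \text{ for all } n\in\bbZ\,\}.
\]
Because $E\otimes^{\bf L}_{\cO_{X'}}(-)$ is exact and preserves arbitrary direct sums, and because $\cG$ is compact (so that $\Hom_{\cD(X')}(\cG[n],-)$ preserves coproducts and sends triangles to long exact sequences), $\cT$ is a localizing triangulated subcategory of $\cD(X')$. The standing assumption on $E$ says precisely that $\cO_{X'}\in\cT$, so by hypothesis~(ii) one has ${\bf R}f_\ast(\cO_X)\in\cT$, yielding the desired vanishing and finishing the proof. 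The only point demanding some care is the invocation of the projection formula, but this is standard for morphisms between quasi-compact quasi-separated schemes and in particular applies to the present $f$.
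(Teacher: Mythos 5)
Your proof is correct and follows essentially the same route as the paper's: reduce via the $({\bf L}f^\ast,{\bf R}f_\ast)$-adjunction and the projection formula to a vanishing statement in $\cD(X')$, use compactness of $\cG$ together with hypothesis~(ii) to get that vanishing, conclude ${\bf L}f^\ast(E)=0$ by the generator hypothesis, and then use the splitting of~(i) applied to $E\otimes^{\bf L}_{\cO_{X'}}{\bf R}f_\ast(\cO_X)\simeq{\bf R}f_\ast{\bf L}f^\ast(E)$ to kill~$E$. The only difference is expository: you make explicit the localizing-subcategory argument (the category $\cT$) that the paper's proof compresses into the single sentence ``Since $\cG$ is a compact object of $\cD(X')$, condition~(ii) implies that the left-hand side of the adjunction isomorphism is zero,'' which tacitly already invokes the projection formula that the paper only states afterwards; your version is therefore somewhat cleaner in its ordering.
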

\begin{proof}
Let $\cF \in \cD(X')$. We need to show that if $\Hom_{\cD(X')}(\Sigma^n\cG,\cF)=0$ for every $n \in \bbZ$, then $\cF$ is trivial. The classical adjunction 
\begin{equation}\label{eq:adjunction}
\xymatrix{
\cD(X) \ar@/^2ex/[d]^-{{\bf R}f_\ast}\\
\cD(X') \ar@/^2ex/[u]^-{{\bf L}f^\ast}
}
\end{equation}
gives rise to the following isomorphisms 
\begin{equation}\label{eq:iso-aux}
\Hom_{\cD(X')}(\Sigma^n\cG, {\bf R}f_\ast{\bf L}f^\ast(\cF)) \simeq \Hom_{\cD(X)}(\Sigma^n {\bf L} f^\ast(\cG), {\bf L}^\ast(\cF))\,.
\end{equation}
Since $\cG$ is a compact object of $\cD(X')$, condition (ii) implies that (the left-hand side of) \eqref{eq:iso-aux} is equal to zero. Using the left-hand side of \eqref{eq:implication}, we hence conclude that ${\bf L} f^\ast(\cF)$, and consequently ${\bf R}f_\ast {\bf L}f^\ast(\cF)$, is trivial. Now, recall that the projection formula (see \cite[Thm.~2.5.5]{TT}) furnish us the following isomorphism
\begin{equation}\label{eq:projection}
{\bf R}f_\ast {\bf L}f^\ast(\cF) = {\bf R}f_\ast ({\bf L}f^\ast(\cF) \otimes^{{\bf L}}_{\cO_{X}} \cO_{X}) \simeq \cF \otimes^{{\bf L}}_{\cO_{X'}}{\bf R}f_\ast(\cO_X)\,.
\end{equation}
Condition (i), combined with isomorphism \eqref{eq:projection}, implies that $\cF$ is a direct summand of ${\bf R}f_\ast {\bf L}f^\ast(\cF)$. Since ${\bf R}f_\ast {\bf L}f^\ast(\cF)$ is trivial, we hence conclude finally that $\cF$ is also~trivial. 
\end{proof}
\begin{lemma}
Assume that the above morphism $f$ is finite. Under this assumption, the converse of the above implication \eqref{eq:implication} holds.
\end{lemma}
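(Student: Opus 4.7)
The plan is to verify the two defining properties of a compact generator for $\mathbf{L}f^\ast(\cG)\in\cD(X)$, namely compactness and generation, using respectively that $\mathbf{L}f^\ast$ is symmetric monoidal and that, for a finite morphism, $\mathbf{R}f_\ast$ is conservative.

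First, since $\cG\in\perf(X')$ and $\mathbf{L}f^\ast:\cD(X')\to\cD(X)$ is a symmetric monoidal triangulated functor preserving $\otimes$-dualizable objects, $\mathbf{L}f^\ast(\cG)$ is a perfect complex on $X$, hence a compact object of $\cD(X)$ by the Bondal--Van den Bergh characterization recalled above.

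For generation, let $\cF\in\cD(X)$ satisfy $\Hom_{\cD(X)}(\Sigma^n\mathbf{L}f^\ast(\cG),\cF)=0$ for all $n\in\bbZ$. The adjunction \eqref{eq:adjunction} converts this into $\Hom_{\cD(X')}(\Sigma^n\cG,\mathbf{R}f_\ast(\cF))=0$ for all $n$, and since $\cG$ is a compact generator of $\cD(X')$ this forces $\mathbf{R}f_\ast(\cF)=0$. It then remains to prove the conservativity statement: \emph{for a finite morphism $f$, the functor $\mathbf{R}f_\ast:\cD(X)\to\cD(X')$ reflects the zero object}.

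The main (and only nontrivial) step is therefore this conservativity, which I would deduce from the fact that a finite morphism is affine. Indeed, for affine $f$ the pushforward $f_\ast:\mathrm{Qcoh}(X)\to\mathrm{Qcoh}(X')$ is exact and identifies $\mathrm{Qcoh}(X)$ with the category of quasi-coherent $f_\ast\cO_X$-modules on $X'$; in particular it is exact and faithful. Exactness implies $H^n(\mathbf{R}f_\ast(\cF))\simeq f_\ast H^n(\cF)$ for every $n$, so $\mathbf{R}f_\ast(\cF)=0$ yields $f_\ast H^n(\cF)=0$, and faithfulness of $f_\ast$ then forces $H^n(\cF)=0$ for every $n$, i.e.\ $\cF=0$. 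Combining the two steps, $\mathbf{L}f^\ast(\cG)$ is a compact generator of $\cD(X)$, which is the desired converse of implication \eqref{eq:implication}.
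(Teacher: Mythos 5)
Your proof is correct and takes essentially the same route as the paper: compactness of $\mathbf{L}f^\ast(\cG)$ follows because $\mathbf{L}f^\ast$ preserves compact objects, and generation follows from the adjunction together with conservativity of $\mathbf{R}f_\ast$ for finite $f$. The only difference is that the paper cites the conservativity of $\mathbf{R}f_\ast$ as a standard fact, whereas you supply the (correct) proof via exactness and faithfulness of $f_\ast$ for an affine morphism.
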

\begin{proof}
Let $\cF \in \cD(X)$. Since the functor ${\bf L} f^\ast$ preserves compact objects, it suffices to show that if $\Hom_{\cD(X)}(\Sigma^n{\bf L}f^\ast(\cG), \cF)=0$ for every $n \in \bbZ$, then $\cF$ is trivial. The adjunction \eqref{eq:adjunction} gives rise to the isomorphisms
$$ \Hom_{\cD(X)}(\Sigma^n {\bf L}f^\ast(\cG),\cF)\simeq \Hom_{\cD(X')}(\Sigma^n \cG, {\bf R}f_\ast(\cF))\,.$$
Using the right-hand side of \eqref{eq:implication}, we hence conclude that ${\bf R}f_\ast(\cF)$ is trivial. The proof follows now from the fact that, since by assumption the morphism $f$ is finite, the functor ${\bf R}f_\ast: \cD(X) \to \cD(X')$ is conservative.
\end{proof}
\begin{example}\label{ex:generator}
Given a quasi-projective $k$-scheme $X$, the base-change morphism $X_L \to X$, where $X_L:=X\times_{\mathrm{Spec}(k)}\mathrm{Spec}(L)$, is finite and satisfies conditions (i)-(ii) of Proposition \ref{prop:generator}. Consequently, given a perfect complex $\cG \in \perf(X)$, $\cG_L$ is a compact generator of $\cD(X_L)$ if and only if $\cG$ is a compact generator of $\cD(X)$.
\end{example}
\subsection*{Galois descent}
A {\em $L/k$-Galois scheme} is a $L$-scheme $X$ equipped with a left $G$-action $G \times X \to X, (\rho,x)\mapsto \rho(x)$, which is {\em skew-linear} in the sense that the square
$$
 \xymatrix{
X \ar[d] \ar[rr]^-{\rho(-)} && X \ar[d] \\
\mathrm{Spec}(L) \ar[rr]_-{\mathrm{Spec}(\rho^{-1})} && \mathrm{Spec}(L) 
} $$
commutes for every $\rho \in G$. A {\em $L/k$-Galois module over $X$} consists of a $\cO_X$-module $\cF$ equipped with structure isomorphisms $s_\rho:(\rho(-))_\ast(\cF) \stackrel{\sim}{\to} \cF$ satisfying the following cocycle condition $s_{\tau\rho}=s_\tau \circ (\rho(-))_\ast(s_\rho)$. 

Let $\Mod(X;G)$ be the Grothendieck category of $L/k$-Galois modules and $G$-equivariant morphisms, and $\mathrm{Qcoh}(X;G)$ the full subcategory of those $L/k$-Galois modules that are quasi-coherent as $\cO_X$-modules. In what follows, we denote by $\cD(X;G):=\cD(\mathrm{Qcoh}(X;G))$ the derived category of $L/k$-Galois modules and by $\perf(X;G)$ its full triangulated subcategory consisting of those complexes of $L/k$-Galois modules that are perfect as complexes of $\cO_X$-modules. Let $\cD_\dg(X;G)$ be the derived dg category $\cD_\dg(\cE)$, with $\cE:=\Mod(X;G)$, and $\perf_\dg(X;G)$ its full dg subcategory of those complexes of $L/k$-Galois modules that belong to $\perf(X;G)$. Note that $\dgHo(\cD_\dg(X;G)) \simeq \cD(X;G)$ and $\dgHo(\perf_\dg(X;G))\simeq \perf(X;G)$.

As explained in \cite[Prop.~3.4(i)]{Seva}, the geometric quotient $X/G$ exists and is quasi-projective. Let us write $p:X \to X/G$ for the quotient morphism. The following result will be used in the proof of Theorem \ref{thm:new}.
\begin{proposition}\label{prop:quotient}
We have an equivalence of categories (resp. of dg categories):
\begin{eqnarray}\label{eq:equivalences}
\xymatrix{
\mathrm{perf}(X;G) \ar@/^2ex/[d]^{{\bf R}p_\ast(-)^G}_{\simeq\;} \\ 
\mathrm{perf}(X/G)  \ar@/^2ex/[u]^{{\bf L}p^\ast} 
}&&
\xymatrix{
\perf_\dg(X;G) \ar@/^2ex/[d]^{{\bf R}p_\ast(-)^G}_{\simeq\;} \\ 
\perf_\dg(X/G)  \ar@/^2ex/[u]^{{\bf L}p^\ast}\,.
}
\end{eqnarray}
\end{proposition}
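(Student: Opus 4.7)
The plan is to prove the non-dg equivalence first by reducing to the module-theoretic Galois descent of Knus-Ojanguren already invoked in \eqref{eq:equiv-cat-aux}, and then promote it to the derived and dg settings using the fact that $p$ is a finite faithfully flat morphism.

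\textbf{Step 1 (abelian level).} Since $X$ is a quasi-projective $L/k$-Galois scheme, $X/G$ is a quasi-projective $k$-scheme and the quotient morphism $p:X\to X/G$ is finite and faithfully flat. I would first establish an equivalence of abelian categories
$$\mathrm{Qcoh}(X/G) \;\underset{{\bf R}p_\ast(-)^G}{\overset{{\bf L}p^\ast}{\simeq}}\; \mathrm{Qcoh}(X;G)$$
given by the adjoint pair $p^\ast \dashv p_\ast(-)^G$. This can be checked affine-locally on $X/G$: if $X/G=\mathrm{Spec}(B)$ and $X=\mathrm{Spec}(C)$, then $B\simeq C^G$ and the statement reduces to the classical fact that the categories of $B$-modules and of $C$-modules equipped with a skew-linear $G$-action are equivalent. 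Over a field this is exactly the Knus-Ojanguren equivalence \cite[II. Thm.~5.3]{Knus} used in the proof of Proposition~\ref{prop:Galois-descent}; over a general base the same faithfully flat Galois descent argument applies.

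\textbf{Step 2 (derived level).} Since $p$ is flat, $p^\ast$ is exact and agrees with ${\bf L}p^\ast$. Since $p$ is affine (it is finite), $p_\ast$ is exact on quasi-coherent sheaves, and taking $G$-invariants of a quasi-coherent $L/k$-Galois module is exact by the equivalence of Step~1. Therefore ${\bf R}p_\ast(-)^G$ coincides with $p_\ast(-)^G$ at the level of bounded-below complexes, and unboundedness is handled using that $p$ has finite cohomological dimension. The equivalence of abelian categories then extends termwise to an equivalence $\cD(X/G)\simeq \cD(X;G)$.

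\textbf{Step 3 (restriction to perfect complexes).} Because $p$ is finite flat, ${\bf L}p^\ast=p^\ast$ sends $\perf(X/G)$ into $\perf(X;G)$ (the underlying complex of a perfect $L/k$-Galois module is perfect). Conversely, if $\cE\in\perf(X;G)$, then $p_\ast\cE$ is perfect on $X/G$ because $p$ is finite locally free; taking $G$-invariants preserves this (it is a direct summand, for instance after inverting nothing, by the descent equivalence, which realizes the invariants as a Zariski-local direct factor). Perfection can in fact be checked after the faithfully flat base change along $p$, which gives the cleanest verification. This promotes the derived equivalence of Step~2 to the equivalence on the left-hand side of \eqref{eq:equivalences}.

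\textbf{Step 4 (dg promotion).} Since $p^\ast$ and $p_\ast(-)^G$ are defined on actual complexes (not merely their derived categories), they lift to dg functors $\cC_\dg(\mathrm{Qcoh}(X/G))\to \cC_\dg(\mathrm{Qcoh}(X;G))$ preserving acyclic complexes, hence descend to dg functors between the dg quotients $\cD_\dg(X/G)$ and $\cD_\dg(X;G)$ and restrict to $\perf_\dg(X/G)$ and $\perf_\dg(X;G)$ by Step~3. They induce inverse equivalences on $\dgHo$, so by the standard criterion for quasi-equivalences they are mutually inverse dg equivalences, establishing the right-hand side of \eqref{eq:equivalences}. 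The main obstacle is Step~1: making Galois descent for quasi-coherent sheaves precise in a way compatible with the $L/k$-Galois structure used throughout \S\ref{sec:Galois}; once this is in place, the derived and dg enhancements are essentially formal thanks to the finite flatness of $p$.
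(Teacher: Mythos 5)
Your proof is correct, but it takes a genuinely different route from the paper. The paper's argument is very short: it invokes Joukhovitski's $G$-equivariant descent equivalence $\mathrm{Vect}(X;G)\simeq\mathrm{Vect}(X/G)$ at the level of vector bundles, then uses the resolution property of quasi-projective schemes (Thomason-Trobaugh: $\cD^b(\mathrm{Vect}(-))\simeq\perf(-)$) and simply applies $\cD^b(-)$ resp.\ $\cD^b_\dg(-)$ to that equivalence, so that perfection is built in from the start. Your approach instead does descent at the big abelian level $\mathrm{Qcoh}(X/G)\simeq\mathrm{Qcoh}(X;G)$, passes to unbounded derived categories by observing that $p^\ast$ and $p_\ast(-)^G$ are exact (here the key facts are that the skew-linear $G$-action is necessarily free because $G$ acts freely on $\mathrm{Spec}(L)$, so $p$ is a finite faithfully flat $G$-torsor and $p_\ast$ is exact since $p$ is affine), and then cuts back down to perfect complexes by checking perfection after the faithfully flat base change $p^\ast$. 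Both are valid; the paper's route is shorter and avoids any discussion of unbounded complexes and fppf descent of perfection because it never leaves the world of vector bundles, while yours is more self-contained and makes explicit the finite-flat-torsor geometry that the paper leaves implicit in the citation to Joukhovitski. Two small points worth tightening in your write-up: in Step 2, once you know $p_\ast$ and $(-)^G$ are exact the remark about bounded-below complexes and finite cohomological dimension is superfluous (there is nothing to derive); and in Step 4 note that the paper's $\cD_\dg(X)$ is built from $\Mod(X)$ rather than $\mathrm{Qcoh}(X)$, though this is harmless since both produce the same $\perf_\dg$ up to quasi-equivalence.
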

\begin{proof}
Let $\mathrm{Vect}(X/G)$ be the category of vector bundles over $X/G$ and $\mathrm{Vect}(X;G)$ the full subcategory of $\Mod(X;G)$ consisting of those $L/k$-Galois modules that are vector bundles as $\cO_X$-modules. As explained in \cite[page 12]{Seva}, the quotient morphism $p$ gives rise to the following equivalence of categories\footnote{Note that \eqref{eq:vector} generalizes the equivalence of categories \eqref{eq:equiv-cat-aux}.}:
\begin{equation}\label{eq:vector}
\xymatrix{
\mathrm{Vect}(X;G) \ar@/^2ex/[d]^{p_\ast(-)^G}_{\simeq\;} \\ 
\mathrm{Vect}(X/G)  \ar@/^2ex/[u]^{p^\ast} \,.
}
\end{equation}
Recall from Thomason-Trobaugh \cite[Cor.~3.9]{TT} that since every quasi-projective scheme admits an ample family of line bundles, we have the following equivalences 
\begin{eqnarray*}
\cD^b(\mathrm{Vect}(X)) \simeq \perf(X) && \cD^b(\mathrm{Vect}(X/G)) \simeq \perf(X/G)\,.
\end{eqnarray*}
Therefore, by applying $\cD^b(-)$ (resp. $\cD^b_\dg(-)$) to \eqref{eq:vector} we obtain \eqref{eq:equivalences}. 
\end{proof}
\section{Proof of Theorem~\ref{thm:new1}}
We start by recalling from Weil \cite{Weil} the construction of $\cR_{l/k}(X)$. Recall that $X_L:= X \times_{\mathrm{Spec}(l)}\mathrm{Spec}(L)$. Every element $\sigma \in G/H$ gives rise to a new $L$-scheme ${}^{\sigma\!} X_L$ (the {\em $\sigma$-conjugate\footnote{Note that the $\sigma$-conjugate ${}^{\sigma\!} X_L$ depends only (up to isomorphism) on the left coset $\sigma H$.} of $X_L$}) which is obtained from $X_L$ by base-change along the isomorphism $\mathrm{Spec}(\sigma^{-1}):\mathrm{Spec}(L) \stackrel{\sim}{\to} \mathrm{Spec}(L)$. The product $\prod_{\sigma \in G/H}{}^{\sigma\!} X_L$, equipped with the following skew-linear left $G$-action, is a $L/k$-Galois scheme
\begin{eqnarray}\label{eq:G-action-product}
&G \times \prod_{\sigma \in G/H}{}^{\sigma\!} X_L \too \prod_{\sigma \in G/H}{}^{\sigma\!} X_L & (\rho, \{{}^{\sigma\!} x\}_{\sigma \in G/H}) \mapsto \{{}^{\rho^{-1} \sigma}x\}_{\sigma \in G/H}\,.
\end{eqnarray}
The Weil restriction $\cR_{l/k}(X)$ of $X$ is the geometric quotient $(\prod_{\sigma \in G/H}{}^{\sigma\!} X_L) / G$. 

The proof of the first claim is now divided into the following five steps:
\medbreak\noindent\textbf{Step 1:} Recall from Definition \ref{def:dga-scheme} that $A_X := \End_{\perf_\dg(X)}(\cG)$, where $\cG$ is a generator of $\perf(X)$. Thanks to Example \ref{ex:generator} (with $k$ replaced by $l$), $\cG_L$ is a generator of $\perf(X_L)$. Consequently, we conclude that the dg $L$-algebra $A_{X_L}:=\End_{\perf_\dg(X_L)}(\cG_L)$ is quasi-isomorphic to $\End_{\perf_\dg(X)}(\cG)_L=(A_X)_L$.
\medbreak\noindent\textbf{Step 2:} We have equivalences $\perf_\dg(X_L) \stackrel{\simeq}{\to} \perf_\dg({}^{\sigma\!} X_L), \cF \mapsto {}^{\sigma\!} \cF$, of dg categories. This implies that ${}^{\sigma\!} \cG_L$ is a generator of $\perf({}^{\sigma\!} X_L)$ and consequently that the dg $L$-algebra $A_{{}^{\sigma\!} X_L}$ is quasi-isomorphic to ${}^{\sigma\!} A_{X_L}$. 
\medbreak\noindent\textbf{Step 3:} As explained in \cite[Lem.~3.4.1]{BV}, $\boxtimes_{\sigma \in G/H}{}^{\sigma\!} \cG_L$ is a generator of the triangulated category $\perf(\prod_{\sigma \in G/H}{}^{\sigma\!} X_L)$ and the dg $L$-algebra $A_{\prod_{\sigma \in G/H}{}^{\sigma\!} X_L}$ is quasi-isomorphic to $\otimes_{\sigma \in G/H} A_{{}^{\sigma\!} X_L}$. We hence obtain the following Morita equivalence
\begin{eqnarray*}
\bigotimes_{\sigma \in G/H} \perf_\dg({}^{\sigma\!} X_L) \too \perf_\dg(\prod_{\sigma \in G/H}{}^{\sigma\!} X_L) && \{{}^{\sigma\!} \cF\}_{\sigma \in G/H} \mapsto \boxtimes_{\sigma \in G/H}{}^{\sigma\!} \cF\,.
\end{eqnarray*}
\medbreak\noindent\textbf{Step 4:} The generator $\boxtimes_{\sigma \in G/H}{}^{\sigma\!} \cG_L$ of $\perf(\prod_{\sigma \in G/H}{}^{\sigma\!} X_L)$ is naturally a $L/k$-Galois module over the above $L/k$-Galois scheme \eqref{eq:G-action-product}. Let $p:\prod_{\sigma \in G/H}{}^{\sigma\!} X_L \to \cR_{l/k}(X)$ for the quotient morphism. As explained in \cite[page~12]{Seva}, we have isomorphisms
\begin{eqnarray*}
 \cR_{l/k}(X)_L \simeq \prod_{\sigma \in G/H}{}^{\sigma\!} X_L &&
 ({\bf R}p_\ast(\boxtimes_{\sigma \in G/H}{}^{\sigma\!} \cG_L)^G)_L \simeq  \boxtimes_{\sigma \in G/H} {}^{\sigma\!} \cG_L\,.
 \end{eqnarray*} 
Therefore, Example \ref{ex:generator} (with $X$ and $\cG$ replaced by $\cR_{l/k}(X)$ and ${\bf R}p_\ast(\boxtimes_{\sigma \in G/H}{}^{\sigma\!} \cG_L)^G$) implies that that ${\bf R} p_\ast(\boxtimes_{\sigma \in G/H}{}^{\sigma\!} \cG_L)^G$ is a generator of the triangulated category $\perf(\cR_{l/k}(X))$. Making use of Propositon \ref{prop:quotient} (with $X$ replaced by $\prod_{\sigma \in G/H}{}^{\sigma\!} X_L$), we hence conclude that the dg $k$-algebra $A_{\cR_{l/k}(X)}$ is quasi-isomorphic to 
\begin{equation}\label{eq:iso-new}
\End_{\perf_\dg(\prod_{\sigma \in G/H}{}^{\sigma\!} X_L;G)}(\boxtimes_{\sigma \in G/H}{}^{\sigma\!} \cG_L)\,.
\end{equation}
\medbreak\noindent\textbf{Step 5:} Since the Galois group $G$ acts on $\boxtimes_{\sigma \in G/H}{}^{\sigma\!} \cG_F$ (by permutation of the $\boxtimes$-factors), it acts also by conjugation on the following dg $L$-algebra
\begin{equation}\label{eq:acts}
\End_{\perf_\dg(\prod_{\sigma \in G/H}{}^{\sigma\!} X_L)}(\boxtimes_{\sigma \in G/H}{}^{\sigma\!} \cG_L)\,.
\end{equation} 
As explained in \S\ref{sec:schemes}, \eqref{eq:iso-new} identifies with the $G$-invariant part of \eqref{eq:acts}. Since the following concatenation of quasi-isomorphisms
\begin{eqnarray*}
\otimes_{\sigma \in G/H}{}^{\sigma\!}(A_X)_L\simeq  \otimes_{\sigma \in G/H}{}^{\sigma\!} A_{X_L} &\simeq  & \otimes_{\sigma \in G/H}A_{{}^{\sigma\!} X_L} \\
 & \simeq & A_{\prod_{\sigma \in G/H}{}^{\sigma\!} X_L} \\
 & \simeq &  \End_{\perf_\dg(\prod_{\sigma \in G/H}{}^{\sigma\!} X_L)}(\boxtimes_{\sigma \in G/H}{}^{\sigma\!} \cG_L)
\end{eqnarray*}
is $G$-equivariant and the functor $(-)^G$ preserves quasi-isomorphisms, we conclude that the dg $k$-algebra $\cR_{l/k}^\NC(A_X):= (\otimes_{\sigma \in G/H}{}^{\sigma\!} (A_X)_L)^G$ is quasi-isomorphic to $A_{\cR_{l/k}(X)}$. 

\medbreak

We now prove the second claim of Theorem \ref{thm:new1}. Recall that by assumption $R$ is a $\bbQ$-algebra. Let $X$ be a smooth projective $l$-scheme. Thanks to the isomorphisms 
\begin{eqnarray*}
K_0(A_X)_R\simeq K_0(X)_R && K_0(\cR_{l/k}^\NC(A_X))_R \simeq K_0(A_{\cR_{l/k}(X)})_R\simeq K_0(\cR_{l/k}(X))_R\,,
\end{eqnarray*}
to the fact that the categories $\Chow^\ast_R(-)$ and $\NChow_R(-)$ are rigid symmetric monoidal, and to the construction of the $R$-linear fully-faithful $\otimes$-functor $\Psi$ (see \cite[\S8]{CvsNC}), it is enough to show that the following diagram commutes:
$$
\xymatrix{
K_0(X)_R \ar[d]_-{\cF \mapsto \cR^\NC_{l/k}(\cF)} \ar[rrr]^-{\mathrm{ch}\cdot \sqrt{\mathrm{Td}_X}}_{\sim} &&& \cZ_{\mathrm{rat}}^\ast(X)_R \ar[d]^-{\alpha \mapsto \cR^\ast_{l/k}(\alpha)} \\
K_0(\cR_{l/k}(X))_R \ar[rrr]_{\mathrm{ch}\cdot \sqrt{\mathrm{Td}_{\cR_{l/k}(X)}}}^{\sim} &&& \cZ_{\mathrm{rat}}^\ast(\cR_{l/k}(X))_R\,.
}
$$
Some explanations are in order: $\cZ_{\mathrm{rat}}^\ast(-)_R:=\bigoplus_{n\in \bbZ} \cZ^n_{\mathrm{rat}}(-)_R$ stands for the $R$-algebra of algebraic cycles of arbitrary codimension modulo rational equivalence, $\mathrm{ch}$ stands for the Chern character, and $\sqrt{\mathrm{Td}}$ stands for the square root\footnote{Given an $R$-algebra $Z$ and a power series $\varphi=1 + \sum_{n \geq 1} a_n t^n \in Z\llbracket t\rrbracket$, recall that the square root $\sqrt{\varphi}$ is defined as $\mathrm{exp}(\frac{1}{2} \mathrm{log}(\varphi))$.} of the Todd class. As mentioned above, $\cR_{l/k}(X)_L$ is canonically isomorphic to $\prod_{\sigma \in G/H}{}^{\sigma\!} X_L$. Since the base-change homomorphism
$$ \cZ_{\mathrm{rat}}^\ast(\cR_{l/k}(X))_R \too \cZ_{\mathrm{rat}}^\ast(\cR_{l/k}(X)_L)_R \simeq  \cZ_{\mathrm{rat}}^\ast(\prod_{\sigma \in G/H}{}^{\sigma\!} X_L)_R$$
is injective and the Chern character and square root of the Todd class are compatible with base-change, it suffices then to show that the following diagram commutes:
$$
\xymatrix{
K_0(X)_R \ar[rrr]^-{\mathrm{ch} \cdot \sqrt{\mathrm{Td}_X}}_\sim \ar[d]_-{\cF \mapsto \boxtimes_{\sigma \in G/H} {}^{\sigma\!} \cF_L} &&& \cZ_{\mathrm{rat}}^\ast(X)_R \ar[d]^-{\alpha \mapsto \prod_{\sigma \in G/H} {}^{\sigma\!} \alpha_L} \\
K_0(\prod_{\sigma \in G/H} {}^{\sigma\!} X_L)_R \ar[rrr]_-{\mathrm{ch} \cdot \sqrt{\mathrm{Td}_{\prod_{\sigma \in G/H} {}^{\sigma\!} X_L}}}^\sim &&& \cZ_{\mathrm{rat}}^\ast(\prod_{\sigma \in G/H} {}^{\sigma\!} X_L)_R\,.
}
$$
This follows now from the classical isomorphisms (see Fulton \cite[\S3 and \S15]{Fulton}):
$$\mathrm{ch}(\boxtimes_{\sigma \in G/H} {}^{\sigma\!} \cF_L)\simeq\prod_{\sigma \in G/H} \mathrm{ch}({}^{\sigma\!}\cF_L) \simeq \prod_{\sigma \in G/H} {}^{\sigma\!}\mathrm{ch}(\cF_L)\simeq  \prod_{\sigma \in G/H} {}^{\sigma\!}\mathrm{ch}(\cF)_L$$
$$ \sqrt{\mathrm{Td}_{\prod_{\sigma \in G/H}{}^{\sigma\!} X_L}} \simeq \prod_{\sigma \in G/H}\sqrt{\mathrm{Td}_{{}^{\sigma\!} X_L}} \simeq \prod_{\sigma \in G/H}{}^{\sigma\!} \sqrt{\mathrm{Td}_{X_L}}\simeq \prod_{\sigma \in G/H}{}^{\sigma\!}(\sqrt{\mathrm{Td}_{X}})_L \,.$$
\section{Proof of Theorem~\ref{thm:new22}}\label{sec:thm22}
Recall from \cite[Prop.~7.2]{Artin} the construction of the base-change functor:
\begin{eqnarray}\label{eq:base-change1}
\NChow(k)_R \too \NChow(L)_R && U_R(A) \mapsto U_R(A\otimes_k L)\,.
\end{eqnarray}
Since \eqref{eq:base-change1} is $R$-linear and symmetric monoidal, it preserves the $\otimes$-ideal $\otimes_{\mathrm{nil}}$; see \S\ref{sec:NCmotives}. As proved in \cite[Thm.~7.1 and Prop.~7.4]{Artin}, it preserves also the $\otimes$-ideals $\mathrm{Ker}(HP^\pm)$ and $\cN$. Consequently, it gives rise to the $R$-linear $\otimes$-functors:
\begin{eqnarray}
\NVoev(k)_R \too \NVoev(L)_R && U_R(A) \mapsto U_R(A\otimes_k L) \label{eq:base1} \\
\NHom(k)_R \too \NHom(L)_R && U_R(A) \mapsto U_R(A\otimes_k L) \label{eq:base2} \\
\NNum(k)_R \too \NNum(L)_R && U_R(A) \mapsto U_R(A\otimes_k L)\,. \label{eq:base3}
\end{eqnarray} 
Recall from \S\ref{sec:NCmotives} that $\bbQ \subseteq R$ in \eqref{eq:base1} and that $R$ is a field in \eqref{eq:base2}.
\begin{proposition}\label{prop:base1}
The above functors \eqref{eq:base1}-\eqref{eq:base3} are faithful.
\end{proposition}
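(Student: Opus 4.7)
The plan is to handle the three cases \eqref{eq:base1}--\eqref{eq:base3} separately, each time exploiting the composition of $-\otimes_k L$ with a ``restriction of scalars'' functor $\Res:\NChow(L)_R\to\NChow(k)_R$, together with the specific structural feature of the respective quotient. The functor $\Res$ is well-defined because a smooth proper dg $L$-algebra is automatically smooth and proper as a dg $k$-algebra (as $L/k$ is finite separable), and an $L$-linear bimodule between such algebras is in particular $k$-linear; it is $R$-linear, additive and symmetric monoidal. Using the canonical identification $U_R(A\otimes_k L)\simeq U_R(A)\otimes U_R(L)$ in $\NChow(k)_R$, one checks that $\Res\circ(-\otimes_k L)$ is naturally isomorphic to $-\otimes U_R(L)$. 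Moreover, $L$ viewed once as a $k$-$L$-bimodule and once as an $L$-$k$-bimodule yields morphisms $u:U_R(k)\to U_R(L)$ and $r:U_R(L)\to U_R(k)$ in $\NChow(k)_R$ whose composition equals the class $[L]\in K_0(k)_R=R$, i.e.\ the scalar $d=[L:k]$.

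For \eqref{eq:base1}, since $\bbQ\subseteq R$ the scalar $d$ is invertible in $R$, so $(d^{-1}r)\circ u=\id_{U_R(k)}$ exhibits $U_R(k)$ as a retract of $U_R(L)$. If $f\otimes_k L=0$ in $\NVoev(L)_R$, then $f^{\otimes n}\otimes_k L=0$ in $\NChow(L)_R$ for some $n$; applying $\Res$ yields $f^{\otimes n}\otimes U_R(L)=0$ in $\NChow(k)_R$, and the naturality identity $f^{\otimes n}=(\id\otimes d^{-1}r)\circ(f^{\otimes n}\otimes\id_{U_R(L)})\circ(\id\otimes u)$ then forces $f^{\otimes n}=0$, whence $f=0$ in $\NVoev(k)_R$. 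For \eqref{eq:base3}, I would use that the symmetric monoidal functor $-\otimes_k L$ preserves categorical traces and that the induced map on the endomorphism ring of the unit is the canonical identification $R\to R$: given $f\otimes_k L\in\cN$ in $\NChow(L)_R$, for every $g:Y\to X$ in $\NChow(k)_R$ the choice $\tilde g:=g\otimes_k L$ gives $0=\mathrm{trace}((g\otimes_k L)\circ(f\otimes_k L))=\mathrm{trace}((g\circ f)\otimes_k L)=\mathrm{trace}(g\circ f)$, so $f\in\cN$ in $\NChow(k)_R$ and $f=0$ in $\NNum(k)_R$.

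For \eqref{eq:base2}, I would invoke a Künneth-type isomorphism for periodic cyclic homology of smooth proper dg algebras to obtain a natural isomorphism $HP^\pm(A\otimes_k L)\simeq HP^\pm(A)\otimes_k L$; it would follow that $HP^\pm(f)\otimes_k L=HP^\pm(f\otimes_k L)=0$, and the faithful flatness of $L/k$ would then imply $HP^\pm(f)=0$, so $f=0$ in $\NHom(k)_R$. The most delicate step will be precisely this Künneth comparison in both regimes $k/R$ and $R/k$ of the coefficient field, together with its compatibility with the symmetric monoidal structure of $HP^\pm$ underlying the definition of $\NHom_R$; I expect it to be a direct consequence of the construction of $HP^\pm$ recalled in \cite{Galois}, but it carries the real content of the proposition in the homological case.
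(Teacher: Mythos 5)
Your approach is essentially the same as the paper's. The paper also uses (i) a restriction-of-scalars retract for the $\otimes$-nilpotent case, (ii) commutativity of $HP^\pm$ with base change for the homological case, and (iii) invariance of the Euler pairing under base change for the numerical case; your version simply phrases the first and third arguments more abstractly (retracts of the unit, categorical traces) rather than reducing first by rigidity to $\Hom(U_R(k),U_R(A))\simeq K_0(A)_R$ and computing there, as the paper does.

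One small correction for the homological case: your K\"unneth statement $HP^\pm(A\otimes_k L)\simeq HP^\pm(A)\otimes_k L$ only makes sense in the regime $k/R$ (values in $\mathrm{Vect}_{\bbZ/2}(k)$, respectively $\mathrm{Vect}_{\bbZ/2}(L)$). In the other regime $R/k$, where $HP^\pm$ takes values in $\mathrm{Vect}_{\bbZ/2}(R)$, the relevant compatibility is simply $HP^\pm(A\otimes_k L)\simeq HP^\pm(A)$ as $\bbZ/2$-graded $R$-vector spaces, i.e.\ the triangle with \eqref{eq:base-change1} commutes on the nose; there is no tensoring up and the faithfulness step is automatic. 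You already flag the two-regime issue, but the formula you wrote does not literally specialize to the $R/k$ case and should be adjusted accordingly.
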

\begin{proof}
Given a smooth proper dg $k$-algebra $A$, consider the homomorphism 
\begin{eqnarray}\label{eq:base-change}
K_0(A)_R \too K_0(A_L)_R && [M]_R \mapsto [M\otimes_k L]_R\,.
\end{eqnarray}
Let us denote by $\sim_{\mathrm{nil}}, \sim_{\mathrm{hom}}$, and $\sim_{\mathrm{num}}$, the equivalence relations on the $R$-modules 
\begin{eqnarray*}
\Hom_{\NChow_R(k)}(U_R(k),U_R(A))& \simeq & K_0(A)_R \\
 \Hom_{\NChow_R(L)}(U_R(L),U_R(A\otimes_k L)) &\simeq & K_0(A\otimes_k L)_R
\end{eqnarray*}
induced by the $\otimes$-ideals $\otimes_{\mathrm{nil}}, \mathrm{Ker}(HP^\pm)$, and $\cN$, respectively. Since the category of noncommutative Chow motives is rigid symmetric monoidal, the above functors \eqref{eq:base1}-\eqref{eq:base3} are faithful if and only if the following implications hold:
\begin{eqnarray}
{[M\otimes_k L]}_R \sim_{\mathrm{nil}} 0 & \Rightarrow & {[M]}_R \sim_{\mathrm{nil}} 0 \label{eq:impli-1} \\
{[M\otimes_k L]}_R \sim_{\mathrm{hom}} 0 & \Rightarrow & {[M]}_R \sim_{\mathrm{hom}} 0 \label{eq:impli-2} \\
{[M\otimes_k L]}_R \sim_{\mathrm{num}} 0 & \Rightarrow & {[M]}_R \sim_{\mathrm{num}} 0 \label{eq:impli-3}\,.
\end{eqnarray}
If $[M\otimes_k L]_R \sim_{\mathrm{nil}} 0$, then there exists an integer $n \gg 0$ such that $[(M\otimes_k L)^{\otimes n}]_R=[M^{\otimes n} \otimes_k L]_R=0$. Since the Galois field extension $L/k$ is finite, the restriction functor $\cD_c(A\otimes_k L) \to \cD_c(A)$ gives rise to an homomorphism $\mathrm{res}: K_0(A^{\otimes n}\otimes_k L)_R \to K_0(A^{\otimes n})_R$ such that the following composition 
$$ K_0(A^{\otimes n})_R \stackrel{[M]_R \mapsto [M\otimes_k L]_R}{\too} K_0(A^{\otimes n}\otimes_k L)_R \stackrel{\mathrm{res}}{\too} K_0(A^{\otimes n})_R$$
is equal to multiplication by $\mathrm{deg}(L/k)$. Using the fact that $\bbQ \subseteq R$, we hence conclude that $[M^{\otimes n}]_R=0$, \ie that $[M]_R\sim_{\mathrm{nil}} 0$. This shows implication \eqref{eq:impli-1}. Implication \eqref{eq:impli-2} follows from the commutative diagrams
$$
\xymatrix{
\NChow(L)_R \ar[r]^-{HP^\pm} & \mathrm{Vect}_{\bbZ/2}(L) && \NChow(L)_R \ar[r]^-{HP^\pm} & \mathrm{Vect}_{\bbZ/2}(R) \\
\NChow(k)_R \ar[u]^-{\eqref{eq:base-change1}} \ar[r]_-{HP^\pm} & \mathrm{Vect}_{\bbZ/2}(k) \ar[u]_-{V^\pm \mapsto V^\pm\otimes_k L}&& \NChow(k)_R \ar[u]^-{\eqref{eq:base-change1}}\ar[ur]_-{HP^\pm} & 
}
$$
and from the faithfulness of the functor $\mathrm{Vect}_{\bbZ/2}(k) \to \mathrm{Vect}_{\bbZ/2}(L),V^\pm \mapsto V^\pm\otimes_k L $. In what concerns implication \eqref{eq:impli-3}, consider the following bilinear form 
\begin{eqnarray*}
\chi:K_0(A)_R\times K_0(A)_R \to R &&(M, M')\mapsto \sum_{n \in \bbZ} (-1)^n \mathrm{dim}_k \Hom_{\cD_c(A)}(M,M'[n])\,.
\end{eqnarray*}
As explained in \cite[\S4]{Kontsevich}, $[M]_R \sim_{\mathrm{num}} 0$ if and only if $\chi([M]_R,[M']_R)=0$ for every $M' \in \cD_c(A)$. Similarly, $[M\otimes_k L]_R \sim_{\mathrm{num}} 0$ if and only if $\chi([M\otimes_k L]_R, [M'']_R)=0$ for every $M'' \in \cD_c(A\otimes_k L)$. Therefore, making use of the following equalities
\begin{eqnarray*}
\mathrm{dim}_k \Hom_{\cD_c(A)}(M,M'[n]) = \mathrm{dim}_L \Hom_{\cD_c(A_L)}(M\otimes_k L,M\otimes_k L'[n]) && n \in \bbZ\,
\end{eqnarray*}
we hence obtain the above implication \eqref{eq:impli-3}.
\end{proof}
Let $A \in \spdgalg(l)$. Thanks to Proposition \ref{prop:Galois-descent}, the dg $L$-algebra $\cR^\NC_{l/k}(A)_L$ is canonically isomorphic to $\otimes_{\sigma \in G/H} {}^{\sigma\!} A_L$. As a consequence, the composition
\begin{equation}\label{eq:comp-last} 
\NChow_R(l) \stackrel{\cR_{l/k}^\NC}{\too} \NChow_R(k) \stackrel{\eqref{eq:base-change1}}{\too} \NChow_R(L)
\end{equation}
sends $U_R(A)$ to $U_R(\otimes_{\sigma \in G/H} {}^{\sigma\!} A_L)$. In order to prove that the functor $\cR^\NC_{l/k}: \NChow_R(l) \to \NChow_R(k)$ descends to noncommutative $\otimes$-nilpotent motives, noncommutative homological motives, and noncommutative numerical motives, it is enough by Proposition \ref{prop:base1} to prove that the above composition \eqref{eq:comp-last} descends to noncommutative $\otimes$-nilpotent motives, noncommutative homological motives, and noncommutative numerical motives, respectively. Similarly to the proof of Proposition \ref{prop:base1}, given right $A$-modules $M, M' \in \cD_c(A)$, we need to show the implications:
\begin{eqnarray}
{[M]}_R \sim_{\mathrm{nil}} {[M']}_R & \Rightarrow & [\otimes_{\sigma \in G/H} {}^{\sigma\!} M_L]_R \sim_{\mathrm{nil}} [\otimes_{\sigma \in G/H} {}^{\sigma\!} M_L']_R \label{eq:implication-1}\\
{[M]}_R \sim_{\mathrm{hom}} {[M']}_R & \Rightarrow & [\otimes_{\sigma \in G/H} {}^{\sigma\!} M_L]_R \sim_{\mathrm{hom}} [\otimes_{\sigma \in G/H} {}^{\sigma\!} M_L']_R \label{eq:implication-2} \\ 
{[M]}_R \sim_{\mathrm{num}} {[M']}_R & \Rightarrow & [\otimes_{\sigma \in G/H} {}^{\sigma\!} M_L]_R \sim_{\mathrm{num}} [\otimes_{\sigma \in G/H} {}^{\sigma\!} M_L']_R \,. \label{eq:implication-3}
\end{eqnarray}
Since $\otimes_{\mathrm{nil}}$, $\mathrm{Ker}(HP^\pm), \cN$, are $\otimes$-ideals, the equivalence relations $\sim_{\mathrm{nil}}$, $\sim_{\mathrm{hom}}, \sim_{\mathrm{num}}$, are multiplicative. Therefore, it suffices to show that the~homomorphisms
\begin{eqnarray*}
K_0(A)_R \too K_0({}^{\sigma\!}A_L)_R && [M]_R \mapsto [{}^{\sigma\!}M_L]_R 
\end{eqnarray*}
preserve the equivalence relations $\sim_{\mathrm{nil}}$, $\sim_{\mathrm{hom}}$, and $\sim_{\mathrm{num}}$. This follows from the isomorphisms $({}^{\sigma\!} A_L)^{\otimes n}\simeq {}^{\sigma\!}(A_L^{\otimes n})$, from the commutative diagrams
$$
\xymatrix@C=2.2em@R=2.5em{
\NChow(L)_R  \ar[r]^-{HP^\pm} & \mathrm{Vect}_{\bbZ/2}(L) && \NChow(L)_R \ar[r]^-{HP^\pm} & \mathrm{Vect}_{\bbZ/2}(R)  \\
\NChow(l)_R \ar[r]_-{HP^\pm} \ar[u]^-{U_R(A) \mapsto U_R({}^{\sigma\!}A_L)}  & \mathrm{Vect}_{\bbZ/2}(l) \ar[u]^-{V^\pm \mapsto {}^{\sigma\!} V_L^\pm}&& \NChow(l)_R \ar[u]^-{U_R(A) \mapsto U_R({}^{\sigma\!}A_L)} \ar[r]_-{HP^\pm}& \mathrm{Vect}_{\bbZ/2}(R) \ar[u]^-{V^\pm \mapsto {}^{\sigma\!} V_L^\pm}\,,
}
$$
and from the equalities $\chi([M_L]_R,[M_L']_R)= \chi([{}^{\sigma\!}M_L]_R,[{}^{\sigma\!}M_L']_R)$, respectively. 

Now, let us denote by $\Voev_R(k)$ and $\Voev_R^\ast(k)$ the categories of {\em $\otimes$-nilpotent motives} with $R$-coefficients defined as the idempotent completion of the quotient categories $\Chow_R(k)/\otimes_{\mathrm{nil}}$ and $\Chow_R^\ast(k)/\otimes_{\mathrm{nil}}$. In the same vein, let $\Hom_R(k)$ and $\Hom^\ast_R(k)$ be the categories of {\em homological motives} with $R$-coefficients defined as the idempotent completion of the quotient categories $\Chow_R(k)/\mathrm{Ker}(H^\ast_{dR})$ and $\Chow^\ast_R(k)/\mathrm{Ker}(H^\ast_{dR})$, where $H^\ast_{dR}$ stands for de Rham cohomology. The proof that the functors $\cR_{l/k}:\Chow_R(l) \to \Chow_R(k)$ and $\cR^\ast_{l/k}:\Chow^\ast_R(l) \to \Chow^\ast_R(k)$ also descend to the categories of $\otimes$-nilpotent motives, homological motives, and numerical motives, is (formally) the same and so we leave it for the reader.

As explained in \cite[Prop.~4.1]{Voevodsky} and in the proof of \cite[Thm.~1.5]{Galois}, the functor $\Psi$ descends to an $R$-linear fully-faithful $\otimes$-functor $\Psi_{\mathrm{nil}}: \Voev_R^\ast(k) \to \NVoev_R(k)$ and to an $R$-linear $\otimes$-functor $\Psi_{\mathrm{hom}}: \Hom^\ast_R(k) \to \NHom_R(k)$. Consequently, the commutativity of the following diagrams 
$$
\xymatrix{
& \Voev^\ast_R(l) \ar[d]_-{\cR^\ast_{l/k}} \ar[r]^-{\Psi_{\mathrm{nil}}} & \NVoev_R(l) \ar[d]^-{\cR^\NC_{l/k}} & \\
& \Voev^\ast_R(k) \ar[r]_-{\Psi_{\mathrm{nil}}} & \NVoev_R(k) & \\
 \Hom^\ast_R(l) \ar[d]_-{\cR^\ast_{l/k}} \ar[r]^-{\Psi_{\mathrm{hom}}} & \NHom_R(l) \ar[d]^-{\cR^\NC_{l/k}} & 
 \Num^\ast_R(k) \ar[r]^-{\Psi_{\mathrm{num}}} \ar[d]_-{\cR^\ast_{l/k}} & \NNum_R(k) \ar[d]^-{\cR^\NC_{l/k}} \\
 \Hom^\ast_R(l)  \ar[r]_-{\Psi_{\mathrm{hom}}} & \NHom_R(l)   &
 \Num^\ast_R(k) \ar[r]_-{\Psi_{\mathrm{num}}} & \NNum_R(k) 
}
$$
follows from the commutativity of \eqref{eq:diagram-Chow} and from the fact that the quotient functors
\begin{eqnarray*}
\Chow_R^\ast(l) \too \Voev^\ast(l)_R & \Chow_R^\ast(l) \too \Hom_R^\ast(l) & \Chow_R^\ast(l) \too \Num_R^\ast(l)
\end{eqnarray*}
are full and essentially surjective (up to direct summands).
\section{Proof of Theorem \ref{thm:new3}}
Recall first from \cite[Prop.~2.25]{separable} that the isomorphisms \eqref{eq:index} are given by
\begin{eqnarray}\label{eq:isos}
\mathrm{ind}(A^\op \otimes A') \cdot \bbZ \simeq K_0(A^\op \otimes A') && \mathrm{ind}(A^\op \otimes A') \mapsto [I_{A^\op \otimes A'}]\,,
\end{eqnarray}
where $I_{A^\op \otimes A'}$ stands for the minimal ideal of the central simple $k$-algebra $A^\op \otimes A'$.

Since the category of noncommutative Chow motives is rigid symmetric monoidal and $\cR^\NC_{l/k}$ is a $\otimes$-functor, we can assume without loss of generality that $A=l$. Recall from the Artin-Wedderburn theorem that $A'\simeq M_{n \times n}(D)$ for a unique integer $n \geq 1$ and division $l$-algebra $D$. Using the fact that $M_{n\times n}(D)$ is Morita equivalent to $D$, we can assume moreover that $A'=D$ is a (finite dimensional) division $l$-algebra. In order to prove Theorem \ref{thm:new3}, we need then to show that the map 
\begin{equation}\label{eq:map-1}
\Hom_{\mathrm{CSA}(l)}(U(l),U(D)) \too \Hom_{\mathrm{CSA}(k)}(U(k),U(\cR^\NC_{l/k}(D)))
\end{equation}
identifies, under the above isomorphisms \eqref{eq:isos}, with the polynomial map
\begin{eqnarray}\label{eq:map-2}
\mathrm{ind}(D)\cdot \bbZ \too \mathrm{ind}(\cR^\NC_{l/k}(D))\cdot \bbZ && n \mapsto n^d\,.
\end{eqnarray}
Assume first that $D=l$. In this case, \eqref{eq:map-1} reduces to the polynomial map
\begin{eqnarray}\label{eq:poly-1}
\bbZ \simeq K_0(l) \too K_0(k) \simeq \bbZ && [M] \mapsto [\cR^\NC_{l/k}(M)]\,.
\end{eqnarray}
Thanks to Lemma \ref{lem:computation} below, we have $[\cR^\NC_{l/k}(l^{\oplus n})]=[k^{\oplus (n^d)}]=n^d$ for every $n \geq 1$. Since the unique polynomial extension of the map $\bbN \to \bbZ, n \mapsto n^d$, is still given by $n \mapsto n^d$ (see Proposition \ref{prop:properties}), we conclude that \eqref{eq:poly-1} identifies with $n \mapsto n^d$.

Let us now prove the general case. Note that the above isomorphisms \eqref{eq:isos} (with $A=l$ and $B=D,\cR_{l/k}^\NC(D)$) reduce to 
\begin{eqnarray}
\mathrm{ind}(D) \cdot \bbZ \simeq K_0(D) && \mathrm{ind}(D) \mapsto [D] \label{eq:iso-1}\\
\mathrm{ind}(\cR^\NC_{l/k}(D)) \cdot \bbZ \simeq K_0(\cR^\NC_{l/k}(D)) && \mathrm{ind}(\cR^\NC_{l/k}(D)) \mapsto [I_{\cR_{l/k}^\NC(D)}] \label{eq:iso-2}\,.
\end{eqnarray}
The above map \eqref{eq:map-1} sends $[D]$ to $[\cR^\NC_{l/k}(D)]$. Hence, the following equality 
$$ [\cR^\NC_{l/k}(D)] =\frac{\mathrm{deg}(\cR^\NC_{l/k}(D))}{\mathrm{ind}(\cR^\NC_{l/k}(D))}[I_{\cR_{l/k}^\NC(D)}]\,,$$
combined with the isomorphisms \eqref{eq:iso-1}-\eqref{eq:iso-2}, implies that the above polynomial map \eqref{eq:map-2} sends $\mathrm{ind}(D)$ to $\mathrm{deg}(\cR^\NC_{l/k}(D))$. Since the degree of a central simple algebra is invariant under base-change, Proposition \ref{prop:Galois-descent} gives rise to the equalities
$$ \mathrm{deg}(\cR^\NC_{l/k}(D))=\mathrm{deg}(\cR^\NC_{l/k}(D)_L)= \mathrm{deg}(\otimes_{\sigma \in G/H}{}^{\sigma\!} D_L)= \mathrm{deg}(D_L)^d = \mathrm{ind}(D)^d\,.$$
This allows us to conclude that \eqref{eq:map-1} identifies with the polynomial map \eqref{eq:map-2} in the particular case when $n=\mathrm{ind}(D)$. Since under the above isomorphisms \eqref{eq:isos} the composition law (=$\mathrm{comp.}$) corresponds to multiplication, all the remaining cases follow now from the following commutative diagram:
$$
\xymatrix{
\Hom(U(l),U(l)) \times \Hom(U(l),U(D)) \ar[d]_-{(n \mapsto n^d)\times \eqref{eq:map-1}}\ar[r]^-{\mathrm{comp.}} & \Hom(U(l),U(D)) \ar[d]^-{\eqref{eq:map-1}} \\
\Hom(U(k),U(k)) \times \Hom(U(k),U(\cR^\NC_{l/k}(D))) \ar[r]_-{\mathrm{comp.}} & \Hom(U(k),U(\cR^\NC_{l/k}(D)))\,.
}
$$
This concludes the proof.
\begin{lemma}\label{lem:computation}
We have an isomorphism of $k$-vector spaces $\cR^\NC_{l/k}(l^{\oplus n}) \simeq k^{\oplus(n^d)}$.
\end{lemma}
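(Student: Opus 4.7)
The plan is a direct dimension count via dg Galois descent. First I would unpack the definition: by construction,
$$\cR^\NC_{l/k}(l^{\oplus n}) = \bigl(\otimes_{\sigma \in G/H} {}^{\sigma\!}(l^{\oplus n})_L\bigr)^G,$$
where the tensor product is taken over $L$ and there are $|G/H|=d$ factors. Since $(l^{\oplus n})_L = L^{\oplus n}$, and $\sigma$-conjugation is an auto-equivalence of $\cC(L)$, each factor ${}^{\sigma\!}(L^{\oplus n})$ is again an $n$-dimensional $L$-vector space. Consequently, the underlying $L$-vector space of $\otimes_{\sigma \in G/H} {}^{\sigma\!}(L^{\oplus n})$ is isomorphic to $L^{\oplus n^d}$.

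Next I would invoke Proposition~\ref{prop:Galois-descent}: the $\otimes$-equivalence $(-)^G: \cC(L)^{\mathrm{Gal}} \isotoo \cC(k)$ has quasi-inverse $-\otimes_k L$. Applying this to our $L/k$-Galois complex (in degree zero) $W := \otimes_{\sigma \in G/H} {}^{\sigma\!}(L^{\oplus n})$, we obtain a $k$-vector space $V = W^G$ with $V\otimes_k L \simeq W \simeq L^{\oplus n^d}$. Since $\dim_k V = \dim_L(V\otimes_k L) = n^d$, we conclude $V \simeq k^{\oplus n^d}$, which is the desired isomorphism of $k$-vector spaces.

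The only minor point to verify is that the skew-linear $G$-action on $W$ is the one coming from Example~\ref{ex:skew}, so that Proposition~\ref{prop:Galois-descent} genuinely applies. This is immediate from the construction of the functor \eqref{eq:func-composed}: the action on $W$ is defined by permutation of the tensor factors combined with the canonical skew-linear action on each ${}^{\sigma\!}(L^{\oplus n})$, and this is precisely the $L/k$-Galois structure built into Example~\ref{ex:skew}. There is no serious obstacle here; the content of the lemma is simply that Galois descent preserves dimension.
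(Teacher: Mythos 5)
Your proof is correct, and it takes a genuinely different (and arguably more direct) route than the paper. You observe that $\cR^\NC_{l/k}(l^{\oplus n})=(\otimes_{\sigma\in G/H}{}^{\sigma\!}(L^{\oplus n}))^G$ has the underlying $L$-vector space $L^{\oplus n^d}$, and then invoke Proposition~\ref{prop:Galois-descent} to conclude $\dim_k V=\dim_L(V\otimes_k L)=n^d$, since Galois descent is an equivalence whose quasi-inverse is $-\otimes_k L$. This is a pure dimension count. The paper instead proceeds by analogy with Karpenko's decomposition \eqref{eq:computation}: it first identifies $\cR^\NC_{l/k}(l^{\oplus n})\simeq\bigoplus_{\alpha\in\cO(G/H,n)}L^{\stab(\alpha)}$, then sums $\deg(L/k)/\#\stab(\alpha)$ over orbits (orbit--stabilizer theorem) to recover $n^d$. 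The paper's approach gives extra structural information (an explicit orbit-indexed decomposition that is consistent with \eqref{eq:motivic-computation}), while yours gets to the dimension equality more economically. Since the lemma asks only for an isomorphism of $k$-vector spaces, and two $k$-vector spaces of the same finite dimension are isomorphic, your argument is complete as stated.
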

\begin{proof}
Similarly to \eqref{eq:computation}, we have an isomorphism of $k$-vector spaces $\cR^\NC_{l/k}(l^{\oplus n}) \simeq \bigoplus_{\alpha \in \cO(G/H,n)}L^{\mathrm{stab}(\alpha)}$. Note that the $k$-vector space $L^{\mathrm{stab}(\alpha)}$ is of dimension $\frac{\mathrm{deg}(L/k)}{\# \mathrm{stab}(\alpha)}$, where $\#\mathrm{stab}(\alpha)$ stands for the cardinality of $\mathrm{stab}(\alpha) \subseteq G$. Making use of the equality $\sum_{\alpha \in \cO(G/H,n)}\frac{\mathrm{deg}(L/k)}{\# \mathrm{stab}(\alpha)}=n^d$, we hence conclude that $\cR^\NC_{l/k}(l^{\oplus n}) \simeq k^{\oplus (n^d)}$.
\end{proof}
\subsection*{Base-change}
It is well-known that the assignment $A \mapsto A\otimes_k l$ preserves central simple algebras. Consequently, the functor $-\otimes_k l: \NChow(k) \to \NChow(l)$ (see \cite[Prop.~7.2]{Artin}) restricts to a $\bbZ$-linear $\otimes$-functor $-\otimes_k l : \mathrm{CSA}(k) \to \mathrm{CSA}(l)$.
\begin{proposition}\label{prop:base2}
Given central simple $k$-algebras $A$ and $A'$, the homomorphism
$$ \Hom_{\mathrm{CSA}(k)}(U(A),U(A')) \too \Hom_{\mathrm{CSA}(l)}(U(A\otimes_k l), U(A' \otimes_k l))$$
identifies, under the isomorphisms \eqref{eq:index}, with the inclusion homomorphism
\begin{equation}\label{eq:homo-inclusion}
\mathrm{ind}(A^\op \otimes A')\cdot \bbZ \hookrightarrow \mathrm{ind}((A^\op \otimes A')\otimes_k l)\cdot \bbZ\,.
\end{equation}
\end{proposition}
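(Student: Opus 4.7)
Set $B := A^\op \otimes A'$; this is a central simple $k$-algebra. By \eqref{eq:isos} the isomorphisms \eqref{eq:index} are implemented by $\mathrm{ind}(B) \leftrightarrow [I_B]$ in $K_0(B)$ and similarly by $\mathrm{ind}(B \otimes_k l) \leftrightarrow [I_{B \otimes_k l}]$ in $K_0(B \otimes_k l)$. Under these identifications the homomorphism in the statement is induced by the base-change functor $- \otimes_k l : \NChow(k) \to \NChow(l)$, which at the level of Hom groups is simply the map $[M] \mapsto [M \otimes_k l] : K_0(B) \to K_0(B \otimes_k l)$ (using that, under \eqref{eq:Hom-smooth}, $(A^\op \otimes A') \otimes_k l = (A \otimes_k l)^\op \otimes (A' \otimes_k l)$). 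The plan is therefore to compute the image of the generator $[I_B]$ in $K_0(B\otimes_k l)$ and to match it with the element $\mathrm{ind}(B) \in \mathrm{ind}(B \otimes_k l)\cdot \bbZ$.

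The key relation I will use is that for any central simple $k$-algebra $C$ one has
\[
[C] \;=\; \frac{\mathrm{deg}(C)}{\mathrm{ind}(C)} \cdot [I_C] \qquad \text{in } K_0(C).
\]
Applying this to $B$ and base-changing yields $[B \otimes_k l] = (\mathrm{deg}(B)/\mathrm{ind}(B)) \cdot [I_B \otimes_k l]$ in $K_0(B \otimes_k l)$, while applying the same relation directly to $B \otimes_k l$ gives $[B \otimes_k l] = (\mathrm{deg}(B \otimes_k l)/\mathrm{ind}(B \otimes_k l)) \cdot [I_{B \otimes_k l}]$. Since the degree of a central simple algebra is preserved by base-change, $\mathrm{deg}(B \otimes_k l) = \mathrm{deg}(B)$, and comparing the two expressions produces the equality
\[
[I_B \otimes_k l] \;=\; \frac{\mathrm{ind}(B)}{\mathrm{ind}(B\otimes_k l)}\cdot [I_{B \otimes_k l}] \qquad \text{in } K_0(B \otimes_k l).
\]
Note that $\mathrm{ind}(B \otimes_k l)$ divides $\mathrm{ind}(B)$, so this ratio is an integer; this is also what makes the inclusion \eqref{eq:homo-inclusion} well-defined.

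Combining this with the identifications of \eqref{eq:isos} we see that the generator $\mathrm{ind}(B) \in \mathrm{ind}(B)\cdot \bbZ$ is sent, in $\mathrm{ind}(B \otimes_k l)\cdot \bbZ$, to $(\mathrm{ind}(B)/\mathrm{ind}(B\otimes_k l))\cdot \mathrm{ind}(B\otimes_k l) = \mathrm{ind}(B)$. By $\bbZ$-linearity this identifies the base-change homomorphism with the inclusion \eqref{eq:homo-inclusion}, proving the proposition. The only non-formal step is the bookkeeping identity $[I_B\otimes_k l] = (\mathrm{ind}(B)/\mathrm{ind}(B\otimes_k l))\cdot [I_{B \otimes_k l}]$, and I expect no real obstacle beyond verifying the standard fact that $[C] = (\mathrm{deg}(C)/\mathrm{ind}(C))\cdot [I_C]$ and that degree commutes with base-change.
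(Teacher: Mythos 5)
Your proof is correct and follows essentially the same route as the paper: both arguments reduce to the relation $[C]=\tfrac{\deg(C)}{\mathrm{ind}(C)}[I_C]$ together with the base-change invariance of degree. The only cosmetic difference is that you track the image of the generator $[I_B]$ (corresponding to $\mathrm{ind}(B)$ under \eqref{eq:isos}) and derive the intermediate identity $[I_B\otimes_k l]=\tfrac{\mathrm{ind}(B)}{\mathrm{ind}(B\otimes_k l)}[I_{B\otimes_k l}]$, whereas the paper tracks the image of $[B]$ (corresponding to $\deg(B)$), which makes the appeal to base-change invariance of degree slightly more immediate; both are elementary and yield the same conclusion.
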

\begin{proof}
By combining the isomorphisms \eqref{eq:isos} with the fact that the Grothendieck classes $[A^\op \otimes A']$ and $[(A^\op \otimes A')\otimes_kl]$ are equal respectively to $\frac{\mathrm{deg}(A^\op \otimes A')}{\mathrm{ind}(A^\op \otimes A')}[I_{A^\op \otimes A'}]$ and $\frac{\mathrm{deg}((A^\op \otimes A')\otimes_k l)}{\mathrm{ind}((A^\op \otimes A')\otimes_k l)}[I_{(A^\op \otimes A')\otimes_k l}]$,  
we conclude that the above homomorphism \eqref{eq:homo-inclusion} sends $\mathrm{deg}(A^\op \otimes B)$ to $\mathrm{deg}((A^\op \otimes B)\otimes_k l)$. The proof follow now from the fact that the degree of a central simple algebra is invariant under base-change.
\end{proof}

\end{document}